\definecolor{dkgreen}{rgb}{0,0.6,0}
\definecolor{gray}{rgb}{0.5,0.5,0.5}
\definecolor{mauve}{rgb}{0.58,0,0.82}
\tiny\color{gray},
\theoremstyle{plain}
\newtheorem{theorem}{Theorem}[section]
\newtheorem*{theorem*}{Theorem}
\newtheorem{lemma}[theorem]{Lemma}
\newtheorem{proposition}[theorem]{Proposition}
\newtheorem{conjecture}[theorem]{Conjecture}
\theoremstyle{definition}
\newtheorem{definition}[theorem]{Definition}
\theoremstyle{remark}
\newtheorem{remark}[theorem]{Remark}
\numberwithin{equation}{section}
\newcommand{\R}{\mathbb{R}}
\newcommand{\Z}{\mathbb{Z}}
\newcommand{\M}{\mathcal{M}}
\newcommand{\N}{\mathbb{N}}
\newcommand{\V}{\mathbb{V}}
\newcommand{\X}{\mathbf{X}}
\newcommand{\HH}{\mathbb{H}}
\newcommand{\Ss}{\mathbb{S}}
\newcommand{\eps}{\varepsilon}
\newcommand{\mc}{\mathcal}
\newcommand{\Sym}{\mathrm{Sym}}
\DeclareMathOperator{\Tr}{Tr}
\DeclareMathOperator{\dd}{D}
\DeclareMathOperator{\vol}{vol}
\DeclareMathOperator{\id}{Id}
\DeclareMathOperator{\E}{\mathcal{E}}
\DeclareMathOperator{\e}{\mathbf{e}}
\DeclareMathOperator{\End}{\mathrm{End}}
\newcommand{\be}{\begin{equation}}
\newcommand{\ee}{\end{equation}}
\DeclarePairedDelimiter\floor{\lfloor}{\rfloor}
\def\beq{\begin{equation}}
\def\eeq{\end{equation}}
\def\Oo{\mathrm{O}}
\def\SO{\mathrm{SO}}
\def\beq{\begin{equation}}
\def\eeq{\end{equation}}
\def\bea{\begin{eqnarray*}}
\def\eea{\end{eqnarray*}}
\title
[On the ergodicity of the frame flow on even-dimensional manifolds]
{On the ergodicity of the frame flow on even-dimensional manifolds}
\author{Mihajlo Ceki\'c}
\address{Institut f\"ur Mathematik, Universit\"at Z\"urich, Winterthurerstrasse 190, CH-8057 Z\"urich, Switzerland}
\email{mihajlo.cekic@math.uzh.ch}
\author{Thibault Lefeuvre}
\address{Université de Paris and Sorbonne Université, CNRS, IMJ-PRG, F-75006 Paris, France.}
\email{tlefeuvre@imj-prg.fr}
\author{Andrei Moroianu}
\address{Université Paris-Saclay, CNRS,  Laboratoire de mathématiques d'Orsay, 91405, Orsay, France
and Institute of Mathematics “Simion Stoilow” of the Romanian Academy, 21 Calea Grivitei, 010702 Bucharest, Romania}
\email{andrei.moroianu@math.cnrs.fr}
\author{Uwe Semmelmann}
\address{Institut f\"ur Geometrie und Topologie, Fachbereich Mathematik, Universit{\"a}t Stuttgart, Pfaffenwaldring 57, 70569 Stuttgart, Germany
}
\email{uwe.semmelmann@mathematik.uni-stuttgart.de}
\begin{document}

\begin{abstract} It is known that the frame flow on a closed $n$-dimensional Riemannian manifold with negative sectional curvature is ergodic if $n$ is odd and $n \neq 7$. In this paper we study its ergodicity in the remaining cases. For $n$ even and $n \neq 8, 134$, we show that:
\begin{enumerate}
\item if $n \equiv 2$ mod $4$ or $n=4$, the frame flow is ergodic if the manifold is $\sim 0.3$-pinched,
\item if $n \equiv 0$ mod $4$, it is ergodic if the manifold is $\sim 0.6$-pinched.
\end{enumerate}
In the three dimensions $n=7,8,134$, the respective pinching bounds that we need in order to prove ergodicity are $0.4962...$, $0.6212...$, and $0.5788...$. This is a significant improvement over the previously known results and a step forward towards solving a long-standing conjecture of Brin asserting that $0.25$-pinched even-dimensional manifolds have an ergodic frame flow.
\end{abstract}
\maketitle

\section{Introduction}

Let $(M^n,g)$ be a smooth closed (compact, without boundary) oriented Riemannian manifold with negative sectional curvature of dimension $n \geq 3$. Let $SM \to M$ be the unit tangent bundle and let $FM \to M$ be the principal $\mathrm{SO}(n)$-bundle of \emph{oriented orthonormal bases} over $M$. A point $w \in FM$ over $x \in M$ is the data of an oriented orthonormal basis $(v, \e_2, \ldots, \e_n)$ of $(T_xM, g_x)$. Equivalently, we will see $FM$ as a principal $\mathrm{SO}(n-1)$-bundle over $SM$ by the projection map $p : FM \to SM$ defined as $p(x, v,\e_2,\ldots,\e_n) = (x, v)$. 

We denote by $(\varphi_t)_{t \in \R}$ the geodesic flow on $SM$ and by $(\Phi_t)_{t \in \R}$ the frame flow on $FM$, defined in the following way: given $t \in \R,w \in FM$, the point $\Phi_t(w)$ is obtained by flowing $(x,v)$ by the geodesic flow and parallel transport along this geodesic of the remaining vectors $(\e_2,\ldots,\e_n)$. They satisfy the obvious commutation relation $p \circ \Phi_t = \varphi_t \circ p$, that is the frame flow is an extension of the geodesic flow. When $(M,g)$ has negative sectional curvature (or more generally, when the geodesic flow is \emph{Anosov}, i.e. uniformly hyperbolic), the frame flow is a typical example of a \emph{partially hyperbolic flow}, see \cite{Hasselblatt-Pesin-06}. Since it preserves a natural smooth measure (the product measure of the Liouville measure on the unit tangent bundle and the Haar measure on the group), one of the main questions from the perspective of dynamical systems is to understand its ergodicity with respect to that measure. In negative curvature, it is known that the frame flow is ergodic when $M$ is odd-dimensional \cite{Brin-Gromov-80} and $n \neq 7$, without any further restriction on the metric. However, the situation is more complicated in even dimensions and for $n=7$. 

We will say that the negatively-curved manifold $(M,g)$ has \emph{$\delta$-pinched curvature} for some $\delta \in (0,1]$ if there exists a constant $K > 0$ such that the sectional curvature $\kappa$ satisfies the uniform bounds:
\begin{equation}
\label{equation:pinching}
-K \leq \kappa(u \wedge v) \leq -\delta K,
\end{equation}
for any two-plane $u \wedge v$ in $TM$. We will say that it has \emph{strictly $\delta$-pinched curvature} if the inequality on the right of \eqref{equation:pinching} is strict. Note that in even dimensions, Kähler manifolds cannot have an ergodic frame flow and these are at most $0.25$-pinched \cite{Berger-60-1}. Brin thus formulated the natural conjecture (see \cite[Conjecture 2.6]{Brin-82}):

\begin{conjecture}[Brin '82]
\label{conjecture:brin}
If $(M,g)$ is strictly $0.25$-pinched, then the frame flow is ergodic.
\end{conjecture}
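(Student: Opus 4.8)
The plan is to argue by contradiction. Assume $(M,g)$ satisfies \eqref{equation:pinching} with $\delta=1/4$ and with strict upper inequality, but that the frame flow $(\Phi_t)$ is not ergodic; I want to extract from this a geometric structure on $M$ that strict $1/4$-pinching cannot support. Since the curvature is negative, $(\varphi_t)$ is Anosov on $SM$, hence ergodic for the Liouville measure, and $(\Phi_t)$ is a compact $\mathrm{SO}(n-1)$-extension of $(\varphi_t)$. The ergodic-theoretic structure of such extensions over an ergodic base produces a proper closed subgroup $H\subsetneq\mathrm{SO}(n-1)$, well defined up to conjugacy, and a measurable $\Phi_t$-invariant reduction of $p:FM\to SM$ to $H$, with $(\Phi_t)$ ergodic if and only if $H=\mathrm{SO}(n-1)$. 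A proper closed subgroup $H\subset\mathrm{SO}(n-1)$ either fixes a proper linear subspace of $\mathbb{R}^{n-1}$, or acts irreducibly while still fixing some element in a tensor power of $\mathbb{R}^{n-1}$ (a complex or quaternionic structure, a calibrating form, \dots). In all cases, non-ergodicity yields a non-trivial, measurable, $\Phi_t$-invariant subbundle $E$ of the normal bundle $\mathcal N\to SM$, $\mathcal N_{(x,v)}=v^{\perp}\subset T_xM$, or of one of its tensor powers; the task is to rule out every such $E$.

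It is convenient to separate cases according to whether $H$ acts transitively on the fibre sphere $S^{n-2}$. If it does not, the $H$-orbit of a generic unit normal vector gives a proper $\Phi_t$-invariant subvariety of the unit normal bundle $\mathcal N^1\to SM$, i.e.\ non-ergodicity of the ``sphere extension'', and one obtains $E\subset\mathcal N$ of some intermediate rank $0<r<n-1$. If $H$ does act transitively, the Montgomery--Samelson--Borel classification of transitive actions on spheres forces the identity component of $H$ into the list $\mathrm{U}((n-1)/2),\,\mathrm{SU}((n-1)/2),\,\mathrm{Sp}((n-1)/4),\,\mathrm{Sp}((n-1)/4)\mathrm{U}(1),\,\mathrm{Sp}((n-1)/4)\mathrm{Sp}(1)$ and, only when $n-1\in\{7,8,16\}$, one of $\mathrm{G}_2,\,\mathrm{Spin}(7),\,\mathrm{Spin}(9)$; the reduction then equips $\mathcal N$ with a $\Phi_t$-invariant complex, quaternionic or exceptional structure. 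When $n$ is even, $n-1$ is odd, so this transitive case is almost vacuous --- it occurs only for $n=8$, with $H=\mathrm{G}_2$ --- and the heart of the even-dimensional problem is the non-ergodicity of the sphere extension, i.e.\ the invariant subbundles $E\subset\mathcal N$.

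The second step is regularity. A measurable $\Phi_t$-invariant section of a bundle with compact fibre structure over the Anosov base $SM$ is automatically Hölder continuous, by the standard measurable-rigidity argument for cocycles over hyperbolic flows; bootstrapping along the stable and unstable horospherical foliations --- using smoothness of $\Phi_t$ and of parallel transport, and that stable/unstable holonomies in $FM$ are governed by the Riccati equation for the horospherical second fundamental forms --- upgrades $E$ (or the invariant structure $\sigma$) to a smooth object. The decisive step is to confront it with the curvature. By the very definition of $\Phi_t$, invariance of $E$ along orbits means exactly that $E$ is parallel along every geodesic; invariance under stable/unstable holonomies adds that the horospherical second fundamental forms, and --- after differentiating the Riccati relation --- the Jacobi endomorphisms $R(\cdot,v)v$ of $v^{\perp}$, preserve $E$ everywhere. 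A Weitzenböck/energy estimate --- the twisted Pestov identity on $SM$ --- then shows that under a strict $1/4$-pinch this forces $E\in\{0,\mathcal N\}$ (resp.\ $\sigma=0$), \emph{unless} the pointwise curvature model is exactly that of $\mathbb{C}H^{k}$, $\mathbb{H}H^{k}$ or the Cayley hyperbolic plane --- i.e.\ $M$ is Kähler, quaternion-Kähler, or has holonomy $\mathrm{Spin}(9)$. Each of these is excluded by \emph{strict} $1/4$-pinching: the Kähler and quaternion-Kähler cases by Berger's inequality \cite{Berger-60-1}, with equality forcing the locally symmetric model, and the exceptional-holonomy cases because $\mathrm{G}_2$- and $\mathrm{Spin}(7)$-manifolds are Ricci-flat while the Cayley plane is not negatively curved.

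The main obstacle is the quantitative sharpness of this last estimate. The constant that forces triviality of $E$ degrades with the rank $r$ of the hypothetical invariant subbundle and with $n$, so the argument as it stands only yields ergodicity under a pinching \emph{strictly larger} than $1/4$; this is precisely why Conjecture~\ref{conjecture:brin} can, along this route, only be \emph{almost} reached, the less favourable $\sim 0.6$ threshold for $n\equiv0$ mod $4$ reflecting the weaker curvature estimate available for the subbundles that can occur there. The deterioration is worst in a few dimensions where the relevant fibre representation is exceptional or a dimension count is unfavourable: near $n=8$ (the $\mathrm{G}_2$-reduction), near $n=7$ (the six-dimensional normal bundle with its $\mathrm{SU}(3)$-reductions), and, through a representation-theoretic near-coincidence in the quaternionic family, near $n=134$. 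Closing the remaining gap to the clean $1/4$ threshold would require either a curvature-operator inequality that is sharp against the hyperbolic-space models, or a genuinely global argument using the negative curvature of $M$ rather than only the pointwise pinching.
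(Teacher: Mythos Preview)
The statement you are attempting to prove is a \emph{conjecture}, not a theorem: the paper does not prove it, and it remains open. Your proposal is not a proof of Conjecture~\ref{conjecture:brin}; indeed you say so yourself in the final paragraph, where you concede that ``the argument as it stands only yields ergodicity under a pinching strictly larger than $1/4$'' and that ``closing the remaining gap to the clean $1/4$ threshold would require'' new ideas. A proof proposal that explicitly acknowledges it does not reach the stated conclusion is not a proof.

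What you have sketched is, in broad outline, precisely the strategy the paper carries out to prove Theorem~\ref{theorem:ergodicity}: non-ergodicity gives a proper transitivity group $H\subsetneq\mathrm{SO}(n-1)$, hence a flow-invariant reduction of $FM$; representation theory and the topology of sphere bundles constrain $H$ (Theorem~\ref{theorem:reduction}); the corresponding invariant section of $\Lambda^p\mathcal N$ or $\Sym^2\mathcal N$ produces a normal twisted conformal Killing tensor; and the twisted Pestov identity (Lemma~\ref{lemma:pestov}, Proposition~\ref{proposition:calcul}) forces it to vanish under a pinching hypothesis. The paper makes all of this precise and computes the resulting thresholds $\delta(n)$ in \eqref{equation:threshold-final}; none of them reach $1/4$. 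Your assertion that the Pestov estimate ``forces $E\in\{0,\mathcal N\}$ unless the pointwise curvature model is exactly that of $\mathbb{C}H^k$, $\mathbb{H}H^k$ or the Cayley plane'' is not established anywhere: the actual inequality \eqref{equation:contradiction} involves the constants $B^{\E}_{n,k,\delta}$, $C^{\E}_{n,k,\delta}$, $D_{n,k,\delta}$, and the positivity conditions on these only hold for $\delta$ above the thresholds in \eqref{equation:delta-n-1}--\eqref{equation:delta-sym-2}, not for $\delta>1/4$. The Cauchy--Schwarz step in Lemma~\ref{lemma:inequality-1} is the main source of loss, and the paper's own numerical remark indicates it is not sharp enough to reach $1/4$ even if optimized.

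In short: there is no proof here to compare, because the conjecture is open and your write-up correctly identifies itself as incomplete.
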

More precisely, Brin conjectures that with the same assumption, the frame flow is \emph{Bernoulli} which implies ergodicity (see \cite{Dolgopyat-Kanigowski-Rodriguez-Hertz-24} for the definition of Bernoulli and an up to date survey of this property). Brin also conjectures in the same paper that the frame flow should be ergodic and Bernoulli as long as the holonomy group of the manifold is $\mathrm{SO}(n)$, see \cite[Conjecture 2.9]{Brin-82}. It is also reasonable to expect that the frame flow is ergodic in dimension $7$, without any pinching condition. So far, positive answers to the conjectural ergodicity of the frame flow in even dimensions and dimension $7$ were obtained for a pinching $\delta$ close to $1$: strictly greater than $0.8649$ in even dimensions different from $8$ \cite{Brin-Karcher-83}, and strictly greater than $0.9805...$ in dimensions $7$ and $8$ \cite{Burns-Pollicott-03}\footnote{Note that our convention is different from \cite{Brin-Karcher-83,Burns-Pollicott-03} as our pinching is the square of their pinching.}. Ergodicity of the frame flow also holds on an open and dense set of $C^3$-metrics with negative curvature \cite[Section 5]{Brin-82}. However, there has been no progress on Conjecture \ref{conjecture:brin} in the past twenty years. \\

In this paper, we study the dimensions not covered by \cite{Brin-Gromov-80} and prove the following:

\begin{theorem}
\label{theorem:ergodicity}
Let $(M^n,g)$ be a closed $n$-dimensional negatively curved oriented Riemannian manifold with $\delta$-pinched curvature and $n \geq 3$. In the cases where $n$ is even or $n = 7$, the frame flow is ergodic if $\delta > \delta(n)$, where $\delta(n)$ is given by
\small
\begin{equation}
\label{equation:threshold-final}
\begin{array}{cl}
0.2928..., &\text{ if }\, n=4, \\
0.2823..., & \hspace{0.57cm} n=6,  \\
0.4962..., & \hspace{0.57cm} n=7, \\
0.6212..., & \hspace{0.57cm} n=8, \\
0.5788..., & \hspace{0.57cm} n=134, \\
 & \\
\tfrac{\tfrac{2}{3} \sqrt{3(n^2 - 1)} + \tfrac{1}{2}}{3(n + 1) + \tfrac{2}{3}\sqrt{3(n^2 - 1)} - \tfrac{1}{2}},  & \text{ if }\, n \geq 10,\, n \neq 134,\, n \equiv 2 \text{ mod } 4, \\
& \\
\tfrac{n+5+\tfrac{8}{3}\sqrt{(n-1)(n+2)}+\tfrac{2(n+2)(n+4)}{3(n+1)(n+6)}\left( n+3+\tfrac{4}{3} \sqrt{3(n^2-1)}\right)}{3(n+1) + \tfrac{8}{3}\sqrt{(n-1)(n+2)} + \tfrac{2(n+2)(n+4)}{3(n+1)(n+6)} \left(5n+3+\tfrac{4}{3}\sqrt{3(n^2-1)} \right)}, & \text{ if}~ n\geq 12,\, n \equiv 0 \text{ mod } 4.
\end{array}
\end{equation}
\normalsize
Asymptotically, $\delta(4\ell+2) \to_{\ell \to \infty} 0.2779...$ and $\delta(4\ell) \to_{\ell \to \infty} 0.5572...$. Moreover, the sequence $(\delta(4\ell+2))_{\ell \geq 2}$ is increasing and $\delta(10)= 0.2725...$, while $(\delta(4\ell))_{\ell \geq 3}$ is decreasing and $\delta(12) = 0.5948...$.
\end{theorem}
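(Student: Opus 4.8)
The case $n$ odd with $n\neq 7$ is the Brin--Gromov theorem \cite{Brin-Gromov-80}, so the content is the case $n$ even together with $n=7$. Here the plan is to follow the reduction of ergodicity of the isometric extension $\Phi_t$ of the geodesic flow to the absence of flow-invariant sections of twisted bundles over $SM$, and then to exclude those by a Weitzenböck/Pestov-type identity combined with sharp pinching bounds for the curvature endomorphism $q(R)$. Write $p\colon FM\to SM$ for the $\mathrm{SO}(n-1)$-bundle and $\mathcal N\to SM$ for the rank-$(n-1)$ bundle with fibre $\mathcal N_{(x,v)}=v^{\perp}\subset T_xM$, on which the frame-flow cocycle acts by parallel transport along geodesics. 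If $\Phi_t$ is not ergodic, then decomposing a non-constant bounded invariant function into $\mathrm{SO}(n-1)$-isotypic components yields, for some non-trivial irreducible representation $\rho\colon\mathrm{SO}(n-1)\to U(V_\rho)$, a non-zero measurable section of the associated Hermitian bundle $\mathcal E_\rho=\mathcal N\times_\rho V_\rho$ invariant under the natural lift of $\varphi_t$; one shows such an invariant section is smooth, so it is a non-zero $u\in C^\infty(SM,\mathcal E_\rho)$ with $\nabla_X u=0$, where $\nabla$ is the connection on $\mathcal E_\rho$ induced by the Levi-Civita connection of $(M,g)$ and $X$ is the geodesic vector field. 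It is thus enough to prove that for $\delta>\delta(n)$ no such pair $(\rho,u)$ exists.

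Splitting $\nabla u$ on $SM$ into its flow, horizontal and vertical parts and commuting $X$ with the vertical derivative $\nabla^{\mathbb{V}}$ brings in the Riemann curvature $R$ of $M$ acting on $\mathcal N$ together with the curvature $F^{\mathcal E_\rho}=d\rho\circ R$ of $\mathcal E_\rho$. Integrating over $SM$ against the Liouville measure, the equation $\nabla_X u=0$ produces an identity of the schematic form $0=\|X\nabla^{\mathbb{V}}u\|^2-\langle\mathbf W_\rho(R)\,w,w\rangle+\cdots$, where $w$ is built from the derivatives of $u$ and $\mathbf W_\rho(R)$ is a curvature endomorphism obtained linearly from $R$ through $d\rho$, expressible via $q(R)$. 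The decisive input is the sharp $\delta$-pinching estimate for $q(R)$ on each $\mathrm{SO}(n-1)$-representation: writing $q(R)=q(R_{-K})+q(R-R_{-K})$, the model term $q(R_{-K})$ equals $-K$ times a Casimir eigenvalue, while $\|q(R-R_{-K})\|$ is bounded by $(1-\delta)K$ times an explicit representation-theoretic constant built from Casimir and weight data — this is the origin of the factors $\sqrt{3(n^2-1)}$ and the like. Feeding this in forces $w=0$, hence $\nabla^{\mathbb{V}}u=0$, once the pinching passes the threshold at which $\mathbf W_\rho(R)$ is definite; combined with $\nabla_X u=0$ this makes $u$ parallel on $SM$, so by irreducibility of $\rho$ it vanishes — unless $\rho$ factors through $\mathrm{SO}(n)$, in which case $u$ descends to a parallel section of a bundle over $M$, a non-zero invariant vector of the holonomy representation, so the holonomy is reduced; but a negatively curved manifold with reduced holonomy is at most $1/4$-pinched \cite{Berger-60-1}, contradicting $\delta>\delta(n)>1/4$.

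It remains to compute $\delta(n)$, the supremum over non-trivial $\rho$ of the pinching demanded by the previous step. Since $q(R_{-K})$ scales like $-K\,\mathrm{Cas}(\rho)$ whereas the perturbation grows more slowly, all but finitely many ``small'' representations require a pinching bounded away from the worst case, and one reduces to optimising the resulting elementary function of $n$ over the standard representation $\mathbb{R}^{n-1}$, the exterior powers $\Lambda^k$ — with the middle-degree ones, which split when $4\mid n$, responsible for the gap between the $n\equiv2$ and $n\equiv0$ regimes — $\mathrm{Sym}^2_0$, and the (half-)spin representations. The values in \eqref{equation:threshold-final} are the maxima of these functions; the exceptional dimensions $n=7,8,134$ are exactly those at which the extremal representation changes (a spin representation in dimensions $7$ and $8$, and a crossover between two of the threshold functions at $n=134$), and the stated limits $0.2779\ldots$, $0.5572\ldots$ and the monotonicity of $(\delta(4\ell+2))$ and $(\delta(4\ell))$ follow from a direct analysis of these functions of $n$ and $\sqrt{n^2-1}$.

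The main obstacle, I expect, is precisely this last part together with the sharpness required just before it: the $q(R)$-estimate has to be tight enough to bring the threshold down to roughly $0.28$ and $0.56$ rather than the $\sim0.86$ obtained by Brin--Karcher \cite{Brin-Karcher-83}, and the optimisation over the full unitary dual of $\mathrm{SO}(n-1)$ — identifying the extremal representation in each residue class modulo $4$ and isolating the three exceptional dimensions — is where the real difficulty lies.
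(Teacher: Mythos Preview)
Your sketch shares the high-level shape of the paper's argument --- pass from non-ergodicity to a flow-invariant section of an associated bundle over $SM$, then kill it with a Pestov-type identity under a pinching hypothesis --- but it misses the step that actually makes the computation tractable and produces the stated thresholds.

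The paper does \emph{not} optimise over the full unitary dual of $\mathrm{SO}(n-1)$. Instead, it first passes to a finite cover so that the transitivity group $H$ is connected, and then invokes the topological classification of connected subgroups $H\leq\mathrm{SO}(n-1)$ to which the frame bundle of $\Ss^{n-1}$ can reduce (Leonard, \v Cadek--Crabb, Adams' vector-fields-on-spheres). This is the content of Theorem~\ref{theorem:reduction}: for $n\equiv 2\bmod 4$ or $n=4$ one always gets a flow-invariant section of $\mc N$ itself; for $n\equiv 0\bmod 4$ an orthogonal projector in $\Sym^2\mc N$ of rank $\leq\rho(n)-1$; and the three exceptional dimensions $7,8,134$ arise because $\mathrm{SU}(3)/\mathrm{U}(3)$, $\mathrm{G}_2$, and $\mathrm{E}_7/\Z_2$ respectively act irreducibly on $\R^{n-1}$ while fixing a $2$-form, a $3$-form, and a Lie bracket $3$-form. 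They are not caused by spin representations or by a crossover of threshold functions.

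Once reduced to these few concrete bundles, the paper does not try to force $\nabla_{\V}u=0$. It uses instead that flow-invariant sections have \emph{finite Fourier degree} \cite{Guillarmou-Paternain-Salo-Uhlmann-16}, takes the top Fourier mode $u=f_k$, observes that the condition $\imath_v f=0$ makes $u$ a \emph{normal} twisted conformal Killing tensor (Definition~\ref{definition:nckt}), and applies the localised Pestov identity at degree $k$ together with the degree-$(k-1)$ identity for $\imath_v u$ (Proposition~\ref{proposition:calcul}). Parity arguments (Lemmas~\ref{lemma:topological-degree}--\ref{e7}) force $k\geq 3$ in the $\Lambda^p$ cases and $k\geq 2$ in the $\Sym^2$ case, and the explicit constants $B^{\E}_{n,k,\delta}$, $C^{\E}_{n,k,\delta}$ evaluated at the minimal admissible $k$ give the formulas in \eqref{equation:threshold-final}. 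The residual $k=1$ or $k=2$ objects are then ruled out by classical geometry (nearly-K\"ahler, nearly-parallel $\mathrm{G}_2$/$\mathrm{Spin}(7)$, Berger's K\"ahler pinching bound).

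Without the topological reduction your optimisation is over an infinite family of representations, and there is no reason the supremum should land on the values in \eqref{equation:threshold-final}; in particular the dichotomy between $n\equiv 2$ and $n\equiv 0\bmod 4$ comes from Radon--Hurwitz numbers (whether $H$ must fix a vector or only a low-rank subbundle), not from the middle-degree exterior power splitting.
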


Theorem \ref{theorem:ergodicity} is illustrated by Figure \ref{figure}.

\begin{center}
\begin{figure}[htbp!]
\includegraphics[scale=0.6]{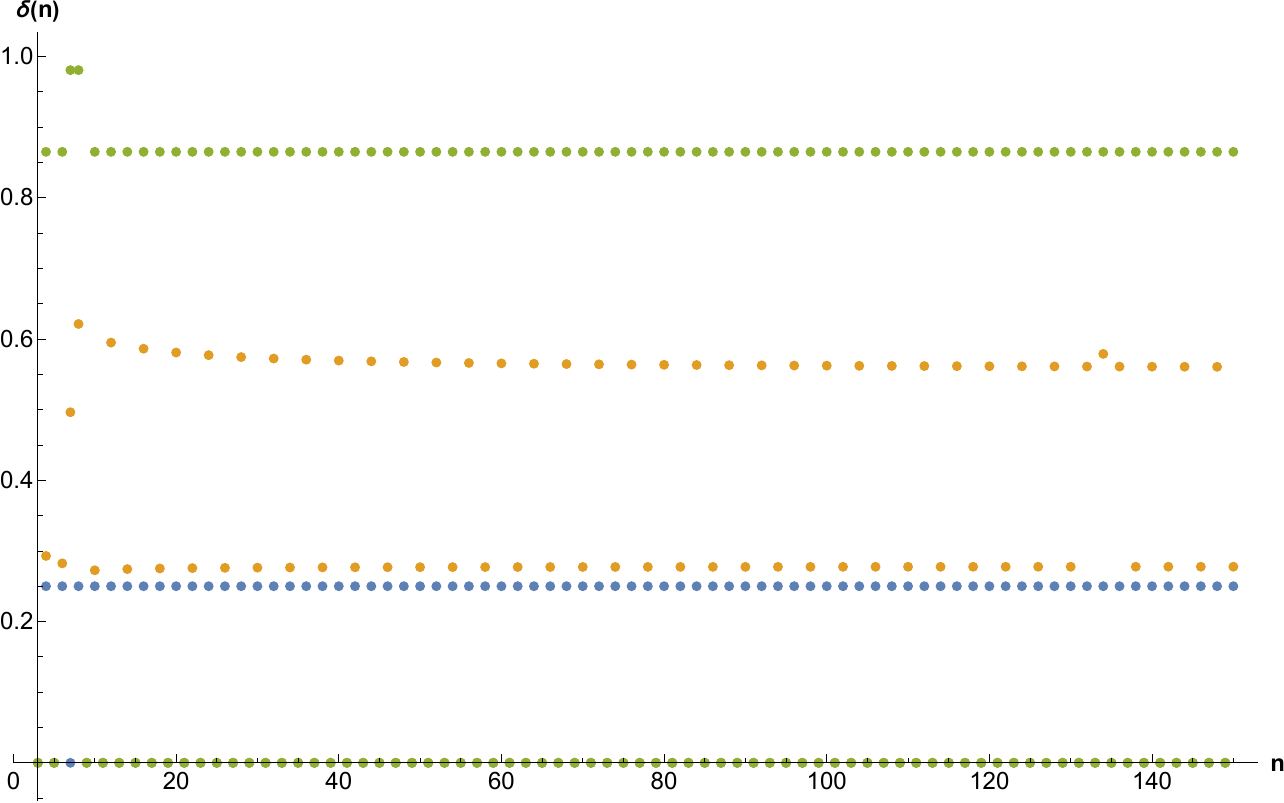}
\caption{In green: the bounds existing in the literature \cite{Brin-Gromov-80,Brin-Karcher-83,Burns-Pollicott-03}. In orange: the bounds provided by Theorem \ref{theorem:ergodicity}. In blue: the conjectural $0.25$ threshold.}
\label{figure}
\end{figure}
\end{center} 

The strategy of the proof is the following. When the frame flow is not ergodic, one can define a strict subgroup $H$ of $\mathrm{SO}(n-1)$ (defined up to conjugation) called the \emph{transitivity group}, and an $H$-subbundle of $FM$ on which the flow is ergodic, see \cite{Brin-75-1,Brin-75-2}, \S\ref{ssection:dynamics1}, and \S\ref{ssection:dynamics2} for further details. This subgroup gives in particular a reduction of the structure group of the frame bundle over the sphere $\Ss^{n-1}$. Using topological arguments, one can exclude most subgroups of $\mathrm{SO}(n-1)$ and only a few cases survive, see \S\ref{ssection:topology}. We show by representation theory and the non-Abelian Liv\v sic theory developed in \cite{Cekic-Lefeuvre-21-1}, that for each possible subgroup $H$ one can construct a flow-invariant section on the unit tangent bundle $SM$ which takes values either in $p$-forms (for $p=1,2,3$) or symmetric endomorphisms of the \emph{normal bundle} (the tangent bundle to the spherical fibers), see Theorem \ref{theorem:reduction}. In turn, the existence of such an invariant section gives rise on the base manifold $M$ to a new object, which we call \emph{normal twisted conformal Killing tensor}: it is a symmetric tensor twisted by some vector bundle, satisfying an algebraic constraint (\emph{normal} condition) and a differential equation similar to the conformal Killing equation (see \cite{Dairbekov-Sharafutdinov-10, Heil-Moroianu-Semmelmann-16, Guillarmou-Paternain-Salo-Uhlmann-16} for further details on the conformal Killing equation). Under some pinching condition and by curvature estimates, we can then rule out the existence of such a non-trivial object using the twisted Pestov identity, see Theorem \ref{theorem:invariant-structures} and \S\ref{section:invariant}. In comparison, earlier results on the ergodicity of the frame flow are based on purely topological arguments \cite{Brin-Gromov-80} (the result in \cite{Brin-Gromov-80} is also naturally re-established in the course of our proof, with essentially the same argument) or on geometric arguments on the universal cover of the manifold, see \cite{Brin-Karcher-83, Burns-Pollicott-03}. 

It can be checked that ergodicity implies mixing for frame flows, see \cite{Lefeuvre-21}. Moreover, it was shown very recently that ergodicity implies rapid mixing \cite{Cekic-Lefeuvre-24, Pollicott-Zhang-24}, that is mixing faster than any polynomial in time. While (negatively curved) K\"ahler manifolds do not have ergodic frame flow, we may ask whether their \emph{unitary frame flow} (that is, the frame flow restricted to the unitary frame bundle) is ergodic. In this setting, \cite{Brin-Gromov-80} show ergodicity if $m := \frac{n}{2}$ is odd or $m = 2$, while in \cite{Cekic-Lefeuvre-Moroianu-Semmelmann-24} we show ergodicity for $m$ even and $m \neq 4, 28$, under a suitable assumption of \emph{holomorphic} pinching. Ergodicity of the frame flow of an arbitrary real vector bundle $(\E, \nabla) \to M$ equipped with a fibrewise inner product and a compatible (orthogonal) connection $\nabla$ was considered in \cite{Cekic-Lefeuvre-22}, where it was established that if the rank $r$ of $\E$ satisfies $r \leq \sqrt{n}$ and the holonomy group of $(\E, \nabla)$ is $\mathrm{SO}(r)$, then the frame flow is ergodic; this provides some evidence in support of \cite[Conjecture 2.9]{Brin-82}.

We believe that the new approach developed in the present paper should eventually lead to a proof of Conjecture \ref{conjecture:brin}, at least in dimensions $4$ and $4\ell +2, \ell > 0$. At this stage, it is not clear whether we use the full strength of the twisted Pestov identity or if some improvements could be achieved in the computations. In particular, numerical experiments could help understand how sharp the inequalities derived from the Pestov identity in \S\ref{section:invariant} are. Moreover, once the normal twisted conformal Killing tensor is obtained in \S\ref{section:invariant}, there might also be an alternative approach to the Pestov identity (e.g. another energy identity on the unit tangent bundle) in order to conclude that this tensor is zero. More generally, we believe that the approach of the present paper should allow one to study ergodicity of general isometric extensions (to a compact fiber bundle) of the geodesic flow over a negatively curved Riemannian manifold, see \cite{Lefeuvre-21} where this is further discussed. \\

\noindent {\bf Organisation of the article.} In \S \ref{section:preliminary} we recall some properties of curvature under the pinching assumption, Fourier analysis and symmetric tensors, geometry of $SM$, and we state the Pestov identity. In \S \ref{section:dynamics} we introduce the transitivity group for principal bundle extensions of Anosov flows, state a correspondence between flow-invariant sections of (associated) vector bundles and fixed vectors of (associated) representations of the transitivity group, and show that non-ergodicity of the frame flow implies the existence of non-zero flow-invariant structures on $SM$. Finally, in \S \ref{section:invariant} we prove delicate bounds involving the pinching constant for the terms appearing in the Pestov identity, which in turn enables us to complete the proof. Note that a survey of the results in this paper can be found in \cite{Cekic-Lefeuvre-Moroianu-Semmelmann-22}.
\\

\noindent \textbf{Acknowledgements:} M.C. has received funding from the European Research Council (ERC) under the European Union’s Horizon 2020 research and innovation programme (grant agreement No. 725967), and an Ambizione grant (project number 201806) from the Swiss National Science Foundation. A.M. was partly supported by the PNRR-III-C9-2023-I8 grant CF 149/31.07.2023 {\em Conformal Aspects of Geometry and Dynamics}. We thank Julien Marché for fruitful discussions, and the anonymous referees for their useful comments and remarks, which allowed us to significantly improve the exposition.

\section{Preliminaries}\label{section:preliminary}

In this section, we provide technical preliminaries necessary throughout this article.

\subsection{Bounds on the curvature tensor}\label{ssection:curvature-tensor-bounds}

Let $(M, g)$ be a smooth Riemannian manifold. We define the curvature tensor $R \in C^\infty(M, \Lambda^2T^*M \otimes \End(TM))$ as:
\[R(X, Y)Z = \nabla^2Z(X, Y) = \nabla_X \nabla_Y Z - \nabla_Y \nabla_X Z - \nabla_{[X, Y]}Z,\]
where $\nabla$ is the Levi-Civita connection, and $X, Y, Z$ are vector fields on $M$. We note that the notation $\nabla^2$ means that we apply $\nabla$ twice, where the second $\nabla$ is defined using the Leibniz rule as a map $C^\infty(M, T^*M \otimes TM) \to C^\infty(M, \Lambda^2 T^*M \otimes TM)$ (this is a classical definition in Riemannian geometry). Throughout the paper we will identify 1-forms with tangent vectors, and more generally $\Lambda^pTM$ with $\Lambda^pT^*M$ via the metric  $\langle{\cdot, \cdot}\rangle = g(\cdot, \cdot)$ and we view the curvature tensor as a $4$-tensor by defining $R(X, Y, Z, T) := \langle{R(X, Y)Z, T}\rangle$. We denote by $\kappa(a \wedge b) := \tfrac{R(a, b, b, a)}{|a|^2|b|^2 - \langle{a, b}\rangle^2}$ the sectional curvature of the plane spanned by two tangent vectors $a$ and $b$. The curvature tensor of a space of constant sectional curvature $-1$ is given by
\begin{equation}\label{eq:Gdef}
	G(a, b)c = \langle{a, c}\rangle.b - \langle{b, c}\rangle.a, \quad G(a, b, c, d) = \langle{a, c}\rangle.\langle{b, d}\rangle - \langle{b, c}\rangle.\langle{a, d}\rangle, 
\end{equation}
for any tangent vectors $a, b, c, d$. If $(M,g)$ has $\delta$-pinched negative sectional curvature (for some $0 < \delta \leq 1$), that is, the sectional curvature satisfies $-1 \leq \kappa(a \wedge b) \leq - \delta$ for all $2$-planes $a \wedge b$, we set:
\begin{equation}\label{eq:R_0def}
	R_0 := R - \tfrac{1 + \delta}{2} G.
\end{equation}
Observe that for unit tangent vectors $a, b$, $R_0$ ``centers'' the tensor $R$ around zero:
\begin{equation}\label{eq:curvatureauxiliary}
	|R_0(a, b, b, a)| \leq \tfrac{1 - \delta}{2} (1 - \langle{a, b}\rangle^2) \leq \tfrac{1 - \delta}{2}.
\end{equation}
Using polarisation identities for the curvature, \cite[Lemma 3.7]{Bourguignon-Karcher-78} shows that:

\begin{lemma}\label{lemma:bk}
For all unit vectors $a, b, c, d \in T_xM$, the following estimate holds:
\begin{equation}
\label{equation:bk-pas-sharp}
	|R_0(a, b, c, d)| \leq \tfrac{2 ( 1 - \delta)}{3}.
\end{equation}
This estimate is sharp for the complex hyperbolic space, with $\delta=0.25$.
\end{lemma}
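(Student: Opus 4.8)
The plan is to reconstruct the full tensor $R_0$ from its ``sectional'' values $R_0(U,V,V,U)$ by means of the classical polarisation identity of Bourguignon--Karcher, which costs only a factor $4/3$. The starting observation is that $R_0 = R - \tfrac{1+\delta}{2}G$ is, like $R$ and $G$, an algebraic curvature tensor: it is antisymmetric in its first and in its last pair of arguments, symmetric under exchange of the two pairs, and it satisfies the first Bianchi identity. I would also upgrade the bound \eqref{eq:curvatureauxiliary} from unit vectors to arbitrary ones using bihomogeneity of degree $(2,2)$ in $(U,V)$:
\[
|R_0(U,V,V,U)| \leq \tfrac{1-\delta}{2}\big(|U|^2|V|^2 - \langle U,V\rangle^2\big) \leq \tfrac{1-\delta}{2}\,|U|^2\,|V|^2, \qquad U,V \in T_xM.
\]

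The key algebraic ingredient is a polarisation identity. Fixing vectors $X,Y,Z,W$, I would consider the two biquadratic polynomials
\[
\Psi_1(s,t) := R_0(X+sZ,\, Y+tW,\, Y+tW,\, X+sZ), \qquad \Psi_2(s,t) := R_0(X+sW,\, Y+tZ,\, Y+tZ,\, X+sW),
\]
and compute their mixed second derivatives at the origin: using only the symmetries of $R_0$ one finds $\partial_s\partial_t|_0(\Psi_2 - \Psi_1) = 4R_0(X,Y,Z,W) - 2R_0(Y,Z,X,W) - 2R_0(Z,X,Y,W)$, which by the first Bianchi identity equals $6\,R_0(X,Y,Z,W)$. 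Since $\partial_s\partial_t|_0\Phi = \tfrac14\sum_{\epsilon,\eta=\pm1}\epsilon\eta\,\Phi(\epsilon,\eta)$ for any polynomial $\Phi$ of degree $\leq 2$ in each variable, this yields the identity
\[
6\,R_0(X,Y,Z,W) = \tfrac14\sum_{\epsilon,\eta=\pm1}\epsilon\eta\Big(R_0(X+\epsilon W, Y+\eta Z, Y+\eta Z, X+\epsilon W) - R_0(X+\epsilon Z, Y+\eta W, Y+\eta W, X+\epsilon Z)\Big).
\]

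Now I would specialise to unit vectors $X,Y,Z,W$, take absolute values, and bound each of the eight ``sectional'' terms by the inequality displayed above. Since $|X+\epsilon W|^2 = 2 + 2\epsilon\langle X,W\rangle$ (and similarly for the other three pairings), the double sum factors as $\sum_{\epsilon,\eta}|X+\epsilon W|^2|Y+\eta Z|^2 = \big(\sum_\epsilon |X+\epsilon W|^2\big)\big(\sum_\eta |Y+\eta Z|^2\big) = 4\cdot 4 = 16$, and likewise $\sum_{\epsilon,\eta}|X+\epsilon Z|^2|Y+\eta W|^2 = 16$; hence $6\,|R_0(X,Y,Z,W)| \leq \tfrac14\cdot\tfrac{1-\delta}{2}\cdot(16+16) = 4(1-\delta)$, which is exactly \eqref{equation:bk-pas-sharp}. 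It is worth noting that no reduction to orthonormal vectors is needed, the inner-product cross terms being absorbed automatically. For the sharpness claim I would exhibit the equality case on the complex hyperbolic space $\C\HH^m$ with $m\geq 2$, whose sectional curvature lies in $[-1,-\tfrac14]$, so that $\delta=\tfrac14$: with the present sign conventions its curvature tensor satisfies $R_0(X,Y,Z,W) = -\tfrac38 G(X,Y,Z,W) + \tfrac14\big(\langle JX,Z\rangle\langle JY,W\rangle - \langle JY,Z\rangle\langle JX,W\rangle\big) + \tfrac12\langle JX,Y\rangle\langle JZ,W\rangle$, and evaluating on the quadruple $(e, Je, f, Jf)$, with $e,f$ unit vectors spanning two orthogonal $J$-invariant planes, gives $R_0(e,Je,f,Jf) = \tfrac12 = \tfrac{2(1-1/4)}{3}$.

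The one genuinely delicate point is the bookkeeping in the polarisation identity: one has to expand $\partial_s\partial_t|_0\Psi_1$ and $\partial_s\partial_t|_0\Psi_2$ carefully, tracking how the antisymmetries and the pair-exchange symmetry of $R_0$ rewrite each of them as a signed combination of the three terms $R_0(X,Y,Z,W),\, R_0(Y,Z,X,W),\, R_0(Z,X,Y,W)$ of the Bianchi triple, with the signs arranged so that the Bianchi relation collapses the combination to precisely $6\,R_0(X,Y,Z,W)$ rather than a smaller multiple. Once this identity is in hand the remainder is elementary, and the clean value $16$ of each double sum is exactly what makes the constant come out to $\tfrac{4}{3}\cdot\tfrac{1-\delta}{2} = \tfrac{2(1-\delta)}{3}$.
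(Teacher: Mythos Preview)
Your proof is correct and is precisely the Bourguignon--Karcher polarisation argument that the paper invokes by citation (\cite[Lemma 3.7]{Bourguignon-Karcher-78}) without reproducing; you have simply filled in the details of that reference, including the sharpness verification on $\C\HH^m$.
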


For $p \in \left\{1,\ldots,n\right\}$, the connection on $\Lambda^p TM \to M$ is induced by the Levi-Civita connection by asking the Leibniz rule to hold. The induced curvature $R^{\Lambda^p}$ on the bundle $\Lambda^p TM$ (for $p=1,\ldots,n$) is given by:
\begin{equation}
\label{equation:algebra}
\begin{split}
R^{\Lambda^p}(a,b) \big(e_1 \wedge \ldots \wedge e_p\big) = & (R(a,b) e_1) \wedge e_2 \wedge \ldots \wedge e_p  \\
& \hspace{2cm} + \ldots + e_1 \wedge \ldots \wedge e_{p-1} \wedge  (R(a,b) e_p)
\end{split}
\end{equation}
where $x \in M, a,b, e_1, \ldots, e_p \in T_xM$; the constant curvature map $G^{\Lambda^p}$ is similarly defined from $G$. The scalar product on decomposable elements in $\Lambda^p TM$ is given by the determinant, namely
\[
\langle \eta_1 \wedge \ldots \wedge \eta_p, \omega_1 \wedge \ldots \wedge \omega_p \rangle = \det (\langle \eta_i, \omega_j \rangle)_{1 \leq i,j \leq p}.
\]
The induced connection on $\Lambda^pTM$ is compatible with this scalar product, that is, it is \emph{orthogonal} (also called metric). As before, the curvature $R^{\Lambda^p}$ splits as $R^{\Lambda^p} = R_0^{\Lambda^p} + \frac{1+\delta}{2} G^{\Lambda^p}$. Using \eqref{equation:bk-pas-sharp}, the fact that $R_0(a,b)$ (being skew-symmetric) is diagonalisable over $\mathbb{C}$, inducing a diagonal basis for $R_0^{\Lambda^p}(a,b)$, together with \eqref{equation:algebra}, we easily see that 
\begin{equation}
\label{equation:rolambdap}
|\langle R_0^{\Lambda^p}(a,b)\omega,\tau\rangle|\leq \tfrac{2p}{3}(1-\delta) |a||b||\omega||\tau|,
\end{equation}
for every tangent vector $a$, $b$ and $p$-forms $\omega$, $\tau$.

Let $\Sym^2 TM \to M$ be the bundle of symmetric $2$-tensors over $M$. In the following, using the metric we will identify $\Sym^2 TM \to M$ with the bundle of symmetric endomorphisms of $M$, whose scalar product is given by
\[
\langle A,B\rangle_x := \Tr(A(x)B(x)).
\]
The action of the curvature map is extended to $\Sym^2 TM$ by the commutator action, namely for all $x \in M, C \in \Sym^2 T_xM$,
\[
R^{\Sym^2}(a,b)C = [R(a,b),C].
\]
Similarly, there is a splitting $R^{\Sym^2} = R^{\Sym^2}_0 + \frac{1+\delta}{2}G^{\Sym^2}$ and using \eqref{equation:bk-pas-sharp}, we obtain the estimate:
\begin{equation}
\label{equation:rosym2}
|\langle R_0^{\Sym^2}(a,b)C,D\rangle|\leq \tfrac{4}{3}(1-\delta) |a||b||C||D|,
\end{equation}
for all $x \in M, a,b \in T_xM, C,D \in \Sym^2 T_xM$.

\subsection{Fourier analysis in the fibers}

\label{ssection:fourier}

Further details on this paragraph can be found in \cite{Paternain-99}, \cite[Section 2]{Paternain-Salo-Uhlmann-15} and \cite[Section 5]{Cekic-Lefeuvre-20}. We emphasise that the origins of Fourier decomposition on the unit tangent bundle and raising/lowering operators go back to \cite{Croke-Sharafutdinov-98, Guillemin-Kazhdan-80-2, Guillemin-Kazhdan-80} in the framework of spectral rigidity.

\subsubsection{Trivial line bundle}

Let $\pi : SM \rightarrow M$ be the projection on the base, where $SM$ is the unit tangent bundle of $(M,g)$. There is a canonical splitting of the tangent bundle of $SM$ as:
\[
T(SM) =  \V \oplus \HH \oplus \R X,
\] 
where $X$ is the geodesic vector field, $\V := \ker d \pi$ is the vertical space and $\HH$ is the horizontal space defined in the following way. Consider the \emph{connection map} $\mc{K} : T(SM) \rightarrow TM$ defined as follows: let $(x,v) \in SM, w \in T_{(x,v)}(SM)$ and a curve $(-\eps,\eps) \ni t \mapsto z(t) \in SM$ such that $z(0)=(x,v), \dot{z}(0)=w$; write $z(t)=(x(t),v(t))$; then $\mc{K}_{(x,v)}(w) := \frac{Dv(t)}{dt}\big|_{t=0}$, where $\frac{D}{dt}$ denotes covariant derivative along the curve $t \mapsto x(t)$ (see for instance \cite[Proposition 2.2]{Do-Carmo-book} for the definition of $\frac{D}{dt}$). Then $\mathbb{H} := \ker \mc{K}$ and if we define the \emph{normal bundle} $\mc{N} \rightarrow SM$ whose fiber at $(x,v) \in SM$ is given by $\mc{N}(x,v) := \left\{v\right\}^\bot \subset T_xM$, then $d \pi : \HH \rightarrow \left\{v\right\}^\perp, \mc{K} : \V  \rightarrow \left\{v\right\}^\perp$ are both identified with isomorphisms $d \pi : \HH \to \mc{N}, \mc{K} : \V \to \mc{N}$. In particular, we will think of the normal bundle $\mc{N}$ as the tangent bundle to the spheres. We denote by $g_{\mathrm{Sas}}$ the Sasaki metric on $SM$, which is the canonical metric on the unit tangent bundle, defined by:
\[
g_{\mathrm{Sas}}(w,w') := g(d \pi(w), d\pi(w')) + g(\mc{K}(w),\mc{K}(w')).
\]

For $x \in M$, the unit sphere
\[
S_xM = \left\{ v \in T_xM ~|~ |v|^2_x = 1\right\} \subset SM
\]
(endowed with the Sasaki metric) is then isometric to the canonical sphere $(\Ss^{n-1},g_{\mathrm{can}})$. We denote by $\Delta_{\V}$ the vertical Laplacian defined for $f \in C^\infty(SM)$ as $\Delta_{\V}f(x,v): = \Delta_{g_{\mathrm{can}}}(f|_{S_xM})(v)$, where $\Delta_{g_{\mathrm{can}}}$ is the (positive) spherical Laplacian. For $k \geq 0$, we introduce
\[
\Omega_k(x) = \ker(\Delta_{\V}(x) - k(n+k-2)),
\]
the spherical harmonics of degree $k$. Observe that $\Omega_k \rightarrow M$ is a well-defined vector bundle over $M$. Given $f \in C^\infty(SM)$, it can be decomposed as $f = \sum_{k \geq 0} \widehat{f}_k$ where $\widehat{f}_k \in C^\infty(M,\Omega_k)$ is the projection of $f$ onto spherical harmonics of degree $k$. We call \emph{Fourier degree} of $f$, denoted by $\mathrm{deg}(f)$, the maximal integer $k_0 \in \Z_{\geq 0}$ such that $\widehat{f}_{k_0} \neq 0$; it takes values in $\left\{0,1,\ldots,+\infty\right\}$. We will also say that $f$ has \emph{finite Fourier content} if its degree is finite, and that $f$ is \emph{odd} (resp. \emph{even}) if it only contains odd (resp. even) spherical harmonics. 

It can be proved that the operator $X$ has the following mapping properties (see \cite[Section 3]{Paternain-Salo-Uhlmann-15}):
\begin{equation}\label{eq:X+-}
X : C^\infty(M,\Omega_k) \rightarrow C^\infty(M,\Omega_{k+1}) \oplus C^\infty(M,\Omega_{k-1}).
\end{equation}
This is understood as follows: a section $\widehat{f}_k \in C^\infty(M,\Omega_k)$ defines in particular a smooth function in $C^\infty(SM)$; we can differentiate in the $X$-direction and this only contains spherical harmonics of degree $k-1$ and $k+1$. Taking the projection on higher degree (resp. lower degree), we obtain an operator $X_+ : C^\infty(M,\Omega_k) \rightarrow C^\infty(M,\Omega_{k+1})$ of gradient type, i.e. with injective principal symbol (resp. $X_- : C^\infty(M,\Omega_k) \rightarrow C^\infty(M,\Omega_{k-1})$ of divergence type) such that $X=X_+ + X_-$ and $X_+^*=-X_-$ (the latter being a mere consequence of the fact that $X^*=-X$, since $X$ preserves the Liouville measure on $SM$). Here, $P^*$ denotes the formal $L^2$-adjoint of an operator $P$. As $X_+$ acting on spherical harmonics of degree $k$ has injective principal symbol, its kernel is finite dimensional by elliptic theory if $M$ is compact. We call \emph{conformal Killing tensors} of degree $k \in \Z_{\geq 0}$ the elements in its kernel.

\subsubsection{Twist by a vector bundle}

\label{sssection:twisted-discussion}

Let $\mc{E} \rightarrow M$ be a \emph{real}\footnote{It can also be taken to be complex but we will always consider real bundles throughout this article.} vector bundle over $M$ equipped with an orthogonal connection $\nabla^{\E}$. Consider the pullback bundle $\pi^*\mc{E} \rightarrow SM$ equipped with the pullback connection $\pi^*\nabla$ and introduce the first-order differential operator
\[
\X := (\pi^* \nabla^{\E})_X : C^\infty(SM,\pi^*\E)  \rightarrow C^\infty(SM,\pi^*\E).
\]

The connection $\pi^*\nabla^{\E}$ also gives rise to differential operators:
\[
\nabla^{\E}_{\HH,\V} : C^\infty(SM,\pi^*\E) \rightarrow C^\infty(SM,\pi^*\E \otimes \mc{N}),
\]
defined in the following way: given $f \in C^\infty(SM,\pi^*\E)$, the covariant derivative $\pi^*\nabla^{\E} f \in C^\infty(SM, T^*(SM) \otimes \pi^*\E)$ can be identified with an element of $C^\infty(SM,T(SM) \otimes \pi^*\E)$ by using the musical isomorphism $T^*(SM) \rightarrow T(SM)$ induced by the Sasaki metric. Using the orthogonal projections of $T(SM)$ onto $\HH$ and $\V$ one can then define the operators: 
\[
\nabla^{\E}_{\HH} f := d\pi( (\pi^*\nabla^{\E} f)_\HH), ~~~ \nabla^{\E}_{\V} f := \mc{K} ((\pi^*\nabla^{\E} f)_\V),
\]
which take values in the bundle $\pi^*\E \otimes \mc{N} \rightarrow SM$. In local coordinates, these operators have explicit expressions in terms of the connection $1$-form and we refer to \cite[Lemma 3.2]{Guillarmou-Paternain-Salo-Uhlmann-16} for further details.

If $(e_1, \ldots, e_r)$ is a smooth local orthonormal basis of $\E$ in a neighborhood of a point $x_0 \in M$, then smooth sections $f \in C^\infty(SM,\pi^*\E)$ can be written near $x_0$ as:
\[
f(x,v) = \sum_{j=1}^r f^{(j)}(x,v) e_j(x) \in \mc{E}_x,
\]
where $f^{(j)} \in C^\infty(SM)$ is only locally defined. As in \S\ref{ssection:fourier}, each $f^{(j)}$ can be in turn decomposed into spherical harmonics. In other words, we can write $f = \sum_{k \geq 0} \widehat{f}_k$, where $\widehat{f}_k \in C^\infty(M, \Omega_k \otimes \mc{E})$ and pointwise in $x \in M$:
\[
\Omega_k \otimes \mc{E} (x) := \ker(\Delta^{\mc{E}}_{\V}(x) - k(n+k-2)),
\]
is the kernel of the vertical Laplacian $\Delta^{\mc{E}}_{\V}$ (this Laplacian is independent of the connection $\nabla^{\mc{E}}$, it only depends on $\mc{E}$ and on $g$). Elements in this kernel are called the \emph{twisted spherical harmonics of degree $k$} and they form a well-defined vector bundle $\Omega_k \otimes \mc{E} \rightarrow M$. As in \S\ref{ssection:fourier}, we can define the degree of $f \in C^\infty(SM,\mc{E})$ and we say that $f$ has \emph{finite Fourier content} if its expansion in spherical harmonics only contains a finite number of terms.

We call \emph{twisted cohomological equation} an equation of the form $\X f = h$, for some given $h \in C^\infty(SM,\pi^*\E)$. We will be interested more specifically in the case where $h=0$. Similarly to \eqref{eq:X+-}, the operator $\X$ maps
\begin{equation}
\label{eq:XX}
\X : C^\infty(M, \Omega_k \otimes \mc{E}) \rightarrow C^\infty(M, \Omega_{k-1} \otimes \mc{E}) \oplus C^\infty(M, \Omega_{k+1} \otimes \mc{E})
\end{equation}
and can be decomposed as $\X = \X_+ + \X_-$, where, if $u \in C^\infty(M,\Omega_k \otimes \mc{E})$, $\X_+u \in C^\infty(M,\Omega_{k+1} \otimes \mc{E})$ denotes the orthogonal projection on the twisted spherical harmonics of degree $k+1$. The operator $\X_+$ is elliptic and thus has finite-dimensional kernel (when $M$ is compact) which consists of \emph{twisted conformal Killing tensors} (CKTs) whereas $\X_-$ is of divergence type. Moreover, $\X_+^* = -\X_-$, where the adjoint is computed with respect to the canonical $L^2$ scalar product on $SM$ induced by the Sasaki metric. We also refer to the original articles of Guillemin-Kazhdan \cite{Guillemin-Kazhdan-80, Guillemin-Kazhdan-80-2} for a description of these facts and to \cite{Guillarmou-Paternain-Salo-Uhlmann-16} for a more modern exposition. It was shown in \cite[Theorem 4.1]{Guillarmou-Paternain-Salo-Uhlmann-16} that flow-invariant sections, i.e. smooth sections in $\ker \X$ have \emph{finite Fourier content}, under the assumption that $(M, g)$ has negative sectional curvature. We mention that finiteness of Fourier content of flow-invariant sections is an open question under the assumption that the geodesic flow of $(M, g)$ is \emph{Anosov}.

Let us also mention that if $\mathfrak{o}(\E)$ is any vector bundle obtained by a functorial operation $\mathfrak{o}$ from $\E$ (e.g. dual, exterior and symmetric powers, tensor products), there is an induced orthogonal connection $\nabla^{\mathfrak{o}(\E)}$ on $\mathfrak{o}(\E)$ and thus an induced operator $\X$ acting on $C^\infty(SM,\pi^*\mathfrak{o}(\E))$. In order not to burden the notation, we will keep the notation $\X$ even if it might denote an operator acting on distinct vector bundles. In particular, this will be applied with $\mathfrak{o}(\E)= \Lambda^p \E$ or $\mathfrak{o}(\E)= \Sym^2 \E$, with $\E=TM$.

\subsubsection{Twisted Pestov identity}

The Pestov identity is a classical identity in Riemannian geometry, see \cite{Guillemin-Kazhdan-80,Croke-Sharafutdinov-98,Paternain-Salo-Uhlmann-15} and \cite{Guillarmou-Paternain-Salo-Uhlmann-16} for the twisted version. If $(\E,\nabla^{\E})$ is a vector bundle equipped with a smoothly varying inner product in its fibers, and with an orthogonal connection (i.e. compatible with the inner product structure on $\E$), we write $\End_{\mathrm{sk}}(\E)$ for skew-symmetric endomorphisms of $\E$ and 
\[
F_{\nabla} = F_{\nabla^{\E}} = (\nabla^{\E})^{ 2} \in C^\infty(M, \Lambda^2T^*M \otimes \End_{\mathrm{sk}}(\E)),
\]
for the curvature. Here, similarly to \S \ref{ssection:curvature-tensor-bounds}, $(\nabla^{\E})^{ 2}$ is defined as the square of the covariant derivative, where the second $\nabla^{\E}$ is defined using the Leibniz rule. Following \cite[Section 3]{Guillarmou-Paternain-Salo-Uhlmann-16}, let $\mc{F}^{\E} \in C^\infty(SM,\mc{N} \otimes \End_{\mathrm{sk}(\E))}$ be defined by the identity:
\begin{equation}
\label{equation:twisted-curvature}
\langle \mc{F}^{\E}(x,v)e, w \otimes e' \rangle := \langle {(F_{\nabla})}_x(v,w)e, e' \rangle,
\end{equation}

where $(x,v) \in SM, e, e' \in \E_x, w \in \mc{N}(x,v)$, and the metric on the right-hand side is the tensor product metric on $\mc{N}(x,v) \otimes \E_x$. Similarly, we will view the Riemannian curvature tensor as an operator on $\mc{N} \otimes \E$, defined by the relation:
\[R(x, v) (w \otimes e) = (R_x(w, v)v) \otimes e, \quad w \in \mc{N}(x, v), e \in \E_x.\]
\begin{lemma}
We have, for any orthonormal basis $\e_1, \dotso, \e_n \in T_xM$, and $v \in S_xM$:
\begin{equation}\label{eq:F^E}
	\mc{F}^{\E}(x, v) = \sum_{i = 1}^n \e_i \otimes F_{\nabla}(v, \e_i).
\end{equation}
\end{lemma}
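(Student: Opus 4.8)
The plan is to verify \eqref{eq:F^E} by pairing both sides against decomposable tensors $w \otimes e'$ with $w \in \mc{N}(x,v)$ and $e, e' \in \E_x$, which span $\mc{N}(x,v) \otimes \E_x$. First I would check that the right-hand side genuinely takes values in the subbundle $\mc{N} \otimes \End_{\mathrm{sk}}(\E)$: decomposing each $\e_i$ into its component along $v$ and its component in $\{v\}^\perp = \mc{N}(x,v)$, the antisymmetry of the curvature gives $F_{\nabla}(v,v) = 0$, so only the $\mc{N}(x,v)$-components contribute; equivalently, one may choose the orthonormal basis with $\e_1 = v$, so that the sum runs effectively over $i \geq 2$ with $\e_i \in \mc{N}(x,v)$. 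Hence, after applying to $e \in \E_x$, both sides of \eqref{eq:F^E} lie in $\mc{N}(x,v) \otimes \E_x$, and it suffices to compare their pairings with $w \otimes e'$.

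Next I would compute, using the tensor product metric on $\mc{N}(x,v) \otimes \E_x$,
\[
\Big\langle \sum_{i=1}^n \e_i \otimes F_{\nabla}(v, \e_i)\, e,\; w \otimes e' \Big\rangle = \sum_{i=1}^n \langle \e_i, w\rangle \, \langle F_{\nabla}(v, \e_i)\, e, e'\rangle .
\]
Since $\sum_{i=1}^n \langle \e_i, w\rangle \e_i = w$ for an orthonormal basis, and $F_{\nabla}(v, \cdot)$ is linear in its second argument, the right-hand side collapses to $\langle F_{\nabla}(v, w)\, e, e'\rangle$, which by the defining identity \eqref{equation:twisted-curvature} equals $\langle \mc{F}^{\E}(x,v)\, e, w \otimes e'\rangle$. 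As $e \in \E_x$ was arbitrary and the $w \otimes e'$ span $\mc{N}(x,v) \otimes \E_x$, this gives \eqref{eq:F^E}.

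There is essentially no serious obstacle here: the statement is a bookkeeping identity identifying the section $\mc{F}^{\E}$ (defined through a pairing) with the natural contraction $\sum_i \e_i \otimes F_{\nabla}(v,\e_i)$. The only point deserving a word of care is that this contraction is independent of the chosen orthonormal basis and lands in $\mc{N} \otimes \End_{\mathrm{sk}}(\E)$ rather than $\pi^*TM \otimes \End_{\mathrm{sk}}(\E)$; both follow at once from $F_{\nabla}(v,v)=0$. Alternatively, one could phrase the argument invariantly by observing that for any linear map $A \colon T_xM \to \End_{\mathrm{sk}}(\E)_x$ the element $\sum_i \e_i \otimes A(\e_i)$ is precisely the image of $A$ under the metric isomorphism $\Hom(T_xM, \End_{\mathrm{sk}}(\E)_x) \cong T_xM \otimes \End_{\mathrm{sk}}(\E)_x$, and then specialising to $A = F_{\nabla}(v, \cdot)$, whose kernel contains $v$.
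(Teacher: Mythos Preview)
Your proof is correct and follows essentially the same approach as the paper: both verify the identity by pairing against decomposable tensors and invoking the defining relation \eqref{equation:twisted-curvature}, using $F_\nabla(v,v)=0$ to ensure the right-hand side lands in $\mc{N}\otimes\End_{\mathrm{sk}}(\E)$. Your version is, if anything, slightly more direct, since you check the defining pairing for the candidate expression rather than first expanding $\mc{F}^{\E}$ in a basis and solving for the components.
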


\begin{proof} We simply write:
\[
\mc{F}^{\E}(x, v) = \sum_{i = 1}^n \e_i \otimes \mc{F}_i(x, v),
\]
for some endomorphisms $\mc{F}_i(x, v): \E_x \to \E_x$. Since $\mc{F}^{\E}$ is a section of $\mc{N} \otimes \End(\E)$, for all $B \in \End(\E_x)$ we have:
\[0 = \langle{\mc{F}^{\E}(x, v), v \otimes B}\rangle = \Big\langle{\sum_i \langle{v, \e_i}\rangle.\mc{F}_i, B}\Big\rangle,\]
that is we have $\sum_i \langle{v, \e_i}\rangle.\mc{F}_i(x, v) = 0$. Thus we compute, by plugging in $w = \e_j - v. \langle{v, \e_j}\rangle \in \mc{N}(x, v)$ in \eqref{equation:twisted-curvature}, for any $e, e' \in \E_x$:
\begin{align*}
	\langle F_{\nabla}(v, \e_j) e, e'\rangle &= \langle \mc{F}^{\E} (x, v) e, (\e_j - v. \langle{v, \e_j}\rangle) \otimes e'\rangle = \sum_i \Big\langle{\e_i \otimes \mc{F}_i e, (\e_j - v. \langle{\e_j, v}\rangle) \otimes e'}\Big\rangle\\
	&= \langle\mc{F}_j(x, v) e, e'\rangle - \langle{\e_j, v}\rangle. \Big\langle{\underbrace{\sum_i \langle{\e_i, v}\rangle.\mc{F}_i}_{= 0}e, e'}\Big\rangle,
\end{align*}
using that $F_{\nabla}(v, v) = 0$. Since $e, e' \in \E_x$ are arbitrary, we have $\mc{F}_j(x, v) = F_{\nabla}(v, \e_j)$.
\end{proof}

All the norms below are the $L^2$-norms. In order to avoid repetitions, we suppress the subscript $L^2$. We call the following identity, the \emph{twisted Pestov identity}. It is slightly different from what \cite{Guillarmou-Paternain-Salo-Uhlmann-16} call a twisted identity but the following lemma can be easily recovered from \cite[Proposition 3.5]{Guillarmou-Paternain-Salo-Uhlmann-16}.

\begin{lemma}[Localized Pestov identity]
\label{lemma:pestov}
Let $(M,g)$ be an $n$-dimensional Riemannian manifold. The following identity holds: for all $k \in \Z_{\geq 0}$, $u \in C^\infty(M,\Omega_k \otimes \E)$,
\begin{equation}
\label{equation:local-pestov}
\begin{split}
\dfrac{(n+k-2)(n+2k-4)}{n+k-3} \|\X_-u\|^2 -  & \dfrac{k(n+2k)}{k+1} \|\X_+u\|^2 + \|Z(u)\|^2 \\
& \hspace{3cm} = \langle R\nabla_{\V}^{\E}u, \nabla_{\V}^{\E}u \rangle + \langle \mc{F}^{\E}u, \nabla_{\V}^{\E}u \rangle,
\end{split}
\end{equation}
where $Z$ is a first order differential operator which we do not make explicit.
\end{lemma}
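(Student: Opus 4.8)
The plan is to derive the twisted Pestov identity \eqref{equation:local-pestov} as a localized (Fourier-degree-by-Fourier-degree) consequence of the global twisted Pestov identity of Guillarmou--Paternain--Salo--Uhlmann, \cite[Proposition 3.5]{Guillarmou-Paternain-Salo-Uhlmann-16}. The global identity reads, for $u \in C^\infty(SM, \pi^*\E)$ with suitable decay,
\[
\|\nabla^{\E}_{\V} \X u\|^2 = \|\X \nabla^{\E}_{\V} u\|^2 - (n-1)\|\X u\|^2 + \langle R \nabla^{\E}_{\V} u, \nabla^{\E}_{\V} u\rangle + \langle \mc{F}^{\E} u, \nabla^{\E}_{\V} u\rangle + \text{(commutator terms)},
\]
or some variant thereof; the exact normalization must be tracked carefully. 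First I would take $u \in C^\infty(M, \Omega_k \otimes \E)$, a single twisted spherical harmonic of degree $k$, and substitute it into the global identity. The point is that all operators appearing are of "gradient" or "divergence" type with respect to the vertical grading: $\X = \X_+ + \X_-$ shifts degree by $\pm 1$, and $\nabla^{\E}_{\V}$ likewise decomposes into a raising and a lowering part. Since $u$ is concentrated in a single degree, each squared norm in the global identity expands into at most two orthogonal pieces (degree $k-1$ and degree $k+1$), and the cross terms vanish by orthogonality of distinct spherical harmonic bundles in $L^2(SM)$.

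The key computational step is then to re-express $\|\nabla^{\E}_{\V} u\|^2$, $\|\X \nabla^{\E}_{\V} u\|^2$, $\|\nabla^{\E}_{\V} \X_{\pm} u\|^2$, etc., in terms of $\|\X_- u\|^2$ and $\|\X_+ u\|^2$ alone, using the algebraic commutation relations between $\X_{\pm}$ and the vertical operators on spherical harmonics of fixed degree. This is where the explicit eigenvalue $k(n+k-2)$ of the vertical Laplacian on $\Omega_k$ enters, and where the rational coefficients $\tfrac{(n+k-2)(n+2k-4)}{n+k-3}$ and $\tfrac{k(n+2k)}{k+1}$ are produced. Concretely one uses identities of the type $\X_+^* \X_+$ and $\X_- \X_-^*$ acting on $\Omega_k \otimes \E$ differ from $\nabla^{\E,*}_{\V}\nabla^{\E}_{\V}$-type operators by scalar multiples determined by $n$ and $k$ — these are the standard "Guillemin--Kazhdan" commutator computations, carried out in the untwisted case in \cite{Paternain-Salo-Uhlmann-15} and essentially unchanged by the twist since the connection terms are lower order and get absorbed into $\mc{F}^{\E}$ and the operator $Z$. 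All remaining first-order contributions — the genuine commutator of $\X$ with the vertical derivative that is neither raising nor lowering in the controlled way — are collected into the single term $\|Z(u)\|^2$, which is left unexpanded by design; one only needs to know it is a nonnegative squared $L^2$-norm of a first-order expression in $u$.

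The main obstacle I anticipate is bookkeeping of constants and sign conventions: matching the normalization of the vertical Laplacian, the Sasaki metric, and the grading conventions in \cite{Guillarmou-Paternain-Salo-Uhlmann-16} to the ones fixed in \S\ref{ssection:fourier} here, so that the coefficients come out exactly as in \eqref{equation:threshold-final}'s parent identity. A secondary subtlety is ensuring that the curvature terms on the right-hand side are precisely $\langle R \nabla^{\E}_{\V} u, \nabla^{\E}_{\V} u\rangle + \langle \mc{F}^{\E} u, \nabla^{\E}_{\V} u\rangle$ with $R$ and $\mc{F}^{\E}$ as normalized in \eqref{equation:twisted-curvature} and the displayed formula for $R(x,v)(w\otimes e)$; this requires invoking the preceding lemma \eqref{eq:F^E} to identify the twisted curvature contribution arising from commuting $\pi^*\nabla^{\E}_X$ past $\nabla^{\E}_{\V}$. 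Once these identifications are in place, the identity follows by collecting terms, and no genuinely new analytic input beyond \cite[Proposition 3.5]{Guillarmou-Paternain-Salo-Uhlmann-16} is needed.
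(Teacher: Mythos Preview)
Your overall strategy --- derive the localized identity from the results of \cite{Guillarmou-Paternain-Salo-Uhlmann-16} by specializing to $u\in C^\infty(M,\Omega_k\otimes\E)$ --- is the right one, and is what the paper does. But the concrete route you describe diverges from the paper's in a way that leaves a genuine gap.

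The paper does not start from the global Pestov identity $\|\nabla_{\V}^{\E}\X u\|^2=\|\X\nabla_{\V}^{\E}u\|^2-\ldots$ and then expand. Instead it invokes \cite[Proposition 3.5]{Guillarmou-Paternain-Salo-Uhlmann-16} in its \emph{already localized} form, which for $u\in C^\infty(M,\Omega_k\otimes\E)$ reads
\[
(n+2k-3)\|\X_-u\|^2 + \|\nabla_{\HH}^{\E}u\|^2 - \langle R\nabla_{\V}^{\E}u,\nabla_{\V}^{\E}u\rangle - \langle\mc{F}^{\E}u,\nabla_{\V}^{\E}u\rangle = (n+2k-1)\|\X_+u\|^2,
\]
and then uses a second, separate input, \cite[Lemma 3.7]{Guillarmou-Paternain-Salo-Uhlmann-16}, which gives the orthogonal decomposition
\[
\nabla_{\HH}^{\E}u = \tfrac{1}{k+1}\nabla_{\V}^{\E}\X_+u - \tfrac{1}{n+k-3}\nabla_{\V}^{\E}\X_-u + Z(u),
\]
where $Z(u)$ is characterized by $\mathrm{div}_{\V}^{\E}Z(u)=0$. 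Taking $L^2$-norms and using that this divergence-free condition makes $Z(u)$ orthogonal to the two $\nabla_{\V}^{\E}\X_{\pm}u$ terms gives $\|\nabla_{\HH}^{\E}u\|^2 = \tfrac{n+k-1}{k+1}\|\X_+u\|^2 + \tfrac{k-1}{n+k-3}\|\X_-u\|^2 + \|Z(u)\|^2$, and substituting back yields \eqref{equation:local-pestov} directly.

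The gap in your plan is precisely here: you propose to ``collect remaining first-order contributions'' into $\|Z(u)\|^2$, asserting that one only needs to know this is a nonnegative squared norm. But that nonnegativity is not automatic --- it is exactly the content of \cite[Lemma 3.7]{Guillarmou-Paternain-Salo-Uhlmann-16}, which identifies $Z(u)$ as a specific geometric object (the vertically divergence-free part of the horizontal gradient) and thereby guarantees both that it is a genuine first-order operator applied to $u$ and that it is $L^2$-orthogonal to the other pieces. Without this structural input, your ``Guillemin--Kazhdan commutator'' bookkeeping would produce residual terms whose sign you cannot control. So the missing idea is not a matter of constants or conventions: it is the horizontal-gradient decomposition of Lemma 3.7, which you should invoke explicitly rather than hope to recover from expanding $\|\X\nabla_{\V}^{\E}u\|^2$.
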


\begin{proof}
By \cite[Proposition 3.5]{Guillarmou-Paternain-Salo-Uhlmann-16}, the following equality holds for $u \in C^\infty(M,\Omega_k \otimes \E)$:
\begin{equation}
\label{equation:pestov-gpsu}
(n+2k -3) \|\X_-u\|^2 + \|\nabla_{\HH}^{\E}u\|^2 - \langle R\nabla_{\V}^{\E}u, \nabla_{\V}^{\E}u \rangle - \langle \mc{F}^{\E}u, \nabla_{\V}^{\E}u \rangle = (n+2k - 1) \|\X_+u\|^2.
\end{equation}
Moreover, by \cite[Lemma 3.7]{Guillarmou-Paternain-Salo-Uhlmann-16}, we have:
\[
\nabla_{\HH}^{\E} u = \dfrac{1}{k+1} \nabla^{\E}_{\V} \X_+ u - \dfrac{1}{n+k-3} \nabla^{\E}_{\V} \X_- u + Z(u),
\]
where $Z(u)$ is some term with vanishing vertical divergence (i.e. $\mathrm{div}^{\E}_{\V} Z(u) = 0$, where $\mathrm{div}^{\E}_{\V}$ is the formal adjoint to the vertical gradient $\nabla_{\V}^{\mc{E}}$). As a consequence, taking the $L^2$-norms, we get:
\begin{equation}
\label{equation:hterm}
\begin{split}
\|\nabla_{\HH}^{\E} u\|^2 & = \dfrac{1}{(k+1)^2} \|\nabla^{\E}_{\V} \X_+ u\|^2 + \dfrac{1}{(n+k-3)^2} \|\nabla^{\E}_{\V} \X_- u\|^2 + \|Z(u)\|^2\\
& = \dfrac{n+k-1}{k+1} \| \X_+ u\|^2 + \dfrac{k-1}{n+k-3} \|\X_-u\|^2 + \|Z(u)\|^2.
\end{split}
\end{equation}
(Here, we simply use that $\mathrm{div}^{\E}_{\V} \nabla^{\E}_{\V} = \Delta_{\V}^{\E}$.) Plugging \eqref{equation:hterm} into \eqref{equation:pestov-gpsu}, and after some algebraic simplifications, we obtain the claimed result.
\end{proof}

\subsection{Link with symmetric tensors}

We refer to \cite{Heil-Moroianu-Semmelmann-16} as well as \cite[Section 3]{Guillarmou-Paternain-Salo-Uhlmann-16} and \cite{Cekic-Lefeuvre-20} for further details on this paragraph. In the following, we will keep identifying $TM$ and $T^*M$ by the metric. Let $\Sym^k TM \to M$ be the vector bundle of symmetric $k$-tensors over $M$. If $u = \sum_{i_1,\ldots,i_k=1}^n u_{i_1 \ldots i_k} \e_{i_1} \otimes \ldots \otimes \e_{i_k} \in TM^{\otimes k}$, where $(\e_1,\ldots,\e_n)$ is a local orthonormal frame, then the orthogonal projection of $u$ onto symmetric tensors $\Sym : TM^{\otimes k} \to \Sym^k TM$ is given by (here $S_k$ denotes the permutation group):
\[
\Sym(u) = \dfrac{1}{k!} \sum_{\sigma \in S_k} \sum_{i_1,\ldots,i_k=1}^n u_{i_1 \ldots i_k} \e_{i_{\sigma(1)}} \otimes \ldots \otimes \e_{i_{\sigma(k)}}.
\]
Let $\mc{I} : \Sym^{k+2} TM  \to \Sym^k TM$ be the trace operator defined pointwise on $M$ as:
\[
\mc{I} u := \sum_{i=1}^n u(\e_i,\e_i, \cdot, \ldots,\cdot).
\]
We define $\Sym^k_0 TM := \Sym^k TM \cap \ker \mc{I}$ to be the space of trace-free tensors. The adjoint of $\mc{I}$ with respect to the natural metric\footnote{The scalar product on $TM^{\otimes k}$ is given by:
\[
g_{TM^{\otimes k}}(v_1 \otimes \ldots \otimes v_k, w_1 \otimes \ldots \otimes w_k) := \prod_{j=1}^k g(v_j,w_j),
\] 
where $v_i,w_i \in TM$ and this induces a scalar product on $\Sym^kTM$ by restriction of the metric.} on $\Sym^k TM$, which we denote by $\mc{J}$, is the symmetric multiplication by the metric $g$, namely:
\[
\mc{I}^* u = \mc{J} u = \Sym(g \otimes u).
\]
The space $\Sym^k TM$ breaks up as the orthogonal sum:
\begin{equation}
\label{equation:decomposition}
\Sym^k TM = \oplus_{i=0}^{\floor{k/2}} \mc{J}^{i} \Sym^{k-2i}_0 TM.
\end{equation}
Let
\[
\mc{P} : \Sym^k TM  \to \Sym^k_0 TM
\]
be the orthogonal projection onto trace-free symmetric tensors, that is, onto the highest degree summand of \eqref{equation:decomposition}.

We can consider symmetric tensors in $\Sym^k TM$ as homogeneous polynomials of degree $k$ on $TM$, or, by restricting to the unit sphere, as spherical harmonics of degree $\leq k$. In fact, for any $x \in M$, $k \in \Z_{\geq 0}$, we introduce the \emph{pullback operator} defined pointwise at $x$ by:
\begin{equation}
\label{equation:pullback}
(\pi^*_k u) (x,v) := u_x(v^{\otimes k}), \hspace{1cm} \pi^*_k : \Sym^k TM(x) \otimes \E(x) \xrightarrow{\sim} \bigoplus_{i=0}^{\floor{k/2}} \Omega_{k-2i}(x) \otimes \E(x),
\end{equation}
and this operator is a graded isomorphism
\[
\pi^*_k : \Sym^k_0 TM (x) \otimes \E (x) \xrightarrow{\sim} \Omega_k(x) \otimes \E(x).
\]
Given $f \in C^\infty(SM,\pi^*\E)$, we have $\deg(f) < \infty$ if and only if there exists $k_1, k_2 \in \Z_{\geq 0}$, $f_{\mathrm{even}} \in C^\infty(M,\Sym^{2 k_1}TM  \otimes \E), f_{\mathrm{odd}} \in C^\infty(M,\Sym^{2 k_2+1} TM \otimes \E)$ such that
\[
f = \pi^*_{2k_1} f_{\mathrm{even}} + \pi^*_{2k_2 + 1} f_{\mathrm{odd}}.
\]
In other words, sections on $SM$ with finite Fourier content can always be seen as sections defined on the base. We now relate the operators $\X$ and $\X_+$ with the usual \emph{symmetrized covariant derivative}
$\dd$ and $\dd_{\E}$. We introduce:
\[
\dd : C^\infty(M,\Sym^k TM) \to C^\infty(M,\Sym^{k+1} TM), ~~~~ \dd := \Sym \circ \nabla,
\]
where $\nabla$ is the Levi-Civita connection induced by $g$. Similarly, in the twisted case, we can consider:
\[
\dd_{\E} : C^\infty(M,\Sym^k TM \otimes \E) \to C^\infty(M,\Sym^{k+1} TM \otimes \E), ~~~~ \dd_{\E} := \Sym \circ \nabla^{\E}.
\]
For the sake of simplicity, we will drop the subscript $\E$ and write $\dd$, even in the twisted case. We have the following relations, see \cite[Section 3.6]{Guillarmou-Paternain-Salo-Uhlmann-16}:

\begin{lemma}
\label{lemma:link}
The following relations hold:
\begin{enumerate}
\item For $f \in C^\infty(M,\Sym^k TM \otimes \E)$, $\pi_{k+1}^* \dd f = \X \pi_k^* f$;
\item For $f \in C^\infty(M,\Sym^k_0 TM \otimes \E)$, $\pi_{k+1}^* \mc{P} \dd f  = \X_+ \pi_k^* f$.
\end{enumerate}
\end{lemma}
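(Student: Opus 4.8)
The plan is to reduce both identities to pointwise evaluations of tensors on pure powers $v^{\otimes\bullet}$, using the definition $\X=(\pi^*\nabla^{\E})_X$ together with the fact that the geodesic vector field is generated by geodesics, and the graded isomorphism $\pi^*_k$ from \eqref{equation:pullback}.

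For statement (1), fix $(x,v)\in SM$ and let $\gamma$ be the geodesic with $\gamma(0)=x$, $\dot\gamma(0)=v$, so that $t\mapsto(\gamma(t),\dot\gamma(t))$ is the flow line of $X$. By definition of $\X$ as the pullback-covariant derivative along $X$, the section $\X\pi^*_kf$ evaluated at $(x,v)$ equals the covariant derivative (with respect to the tensor product connection $\nabla^{\mathrm{LC}}\otimes 1+1\otimes\nabla^{\E}$ on $\Sym^kTM\otimes\E$) of $t\mapsto f_{\gamma(t)}\big(\dot\gamma(t)^{\otimes k}\big)$ at $t=0$. Since $\gamma$ is a geodesic, $\dot\gamma(t)$ is $\nabla^{\mathrm{LC}}$-parallel along $\gamma$, so the Leibniz rule leaves only the term where $f$ is differentiated, giving $(\nabla^{\E}_vf)_x(v^{\otimes k})=(\nabla^{\E}f)_x(v,v,\dots,v)$. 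On the other hand $\pi^*_{k+1}\dd f(x,v)=(\Sym\nabla^{\E}f)_x(v^{\otimes(k+1)})$, and symmetrizing a tensor over its vector slots does not change its value on a pure power $v^{\otimes(k+1)}$; hence $\pi^*_{k+1}\dd f(x,v)=(\nabla^{\E}f)_x(v,\dots,v)$, which is the same expression. As $(x,v)$ was arbitrary this proves (1).

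For statement (2), note first that if $f\in C^\infty(M,\Sym^k_0TM\otimes\E)$ then $\pi^*_kf\in C^\infty(M,\Omega_k\otimes\E)$ by \eqref{equation:pullback}, so $\X_+\pi^*_kf$ is defined and, by the mapping property \eqref{eq:XX}, equals the orthogonal projection of $\X\pi^*_kf$ onto $\Omega_{k+1}\otimes\E$. By (1) this is the $\Omega_{k+1}$-component of $\pi^*_{k+1}\dd f$. Now decompose $\dd f\in C^\infty(M,\Sym^{k+1}TM\otimes\E)$ along \eqref{equation:decomposition}, writing $\dd f=\mc{P}\dd f+\mc{J}h$ with $h\in C^\infty(M,\Sym^{k-1}TM\otimes\E)$ (any further summands lie in $\mc{J}^i\Sym^{k+1-2i}_0TM\otimes\E$ with $i\ge 1$). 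On $SM$ one has $\pi^*_{k+1}(\mc{J}^i w)(x,v)=|v|^{2i}w(v^{\otimes(k+1-2i)})=\pi^*_{k+1-2i}w(x,v)$, so each such term only contributes spherical harmonics of degree $\le k-1$, while $\pi^*_{k+1}(\mc{P}\dd f)\in C^\infty(M,\Omega_{k+1}\otimes\E)$ by \eqref{equation:pullback}. Therefore the $\Omega_{k+1}$-component of $\pi^*_{k+1}\dd f$ is exactly $\pi^*_{k+1}\mc{P}\dd f$, i.e. $\X_+\pi^*_kf=\pi^*_{k+1}\mc{P}\dd f$.

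The computation is essentially formal once the definitions of \S\ref{ssection:fourier} are unwound, so there is no serious obstacle; the two points requiring care are (a) checking that along a geodesic the covariant derivative $\X$ reduces to differentiating $f$ alone, which relies on $\dot\gamma$ being $\nabla^{\mathrm{LC}}$-parallel and on $\pi^*\nabla^{\E}$ being the pullback of $\nabla^{\E}$, and (b) bookkeeping of polynomial degrees under $\pi^*_{k+1}$, namely that the trace part $\mc{J}h$ drops the degree by two and hence cannot interfere with the top harmonic $\Omega_{k+1}$; both are controlled by \eqref{equation:pullback} and \eqref{equation:decomposition}. One could alternatively argue in local coordinates via the explicit creation/annihilation description of $\X_\pm$ on spherical harmonics, but the geodesic computation above is shorter.
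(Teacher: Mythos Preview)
Your proof is correct; the paper itself does not prove this lemma but merely cites \cite[Section 3.6]{Guillarmou-Paternain-Salo-Uhlmann-16}, and your argument is precisely the standard one given there. The only cosmetic point is that when you write ``the covariant derivative (with respect to the tensor product connection $\nabla^{\mathrm{LC}}\otimes 1+1\otimes\nabla^{\E}$) of $t\mapsto f_{\gamma(t)}(\dot\gamma(t)^{\otimes k})$'', what you really mean is that you differentiate this $\E$-valued curve with $\nabla^{\E}$ and then apply Leibniz for the contraction, using the tensor product connection on $f$ and $\nabla^{\mathrm{LC}}$ on $\dot\gamma^{\otimes k}$; this is exactly what you do in the next sentence, so the content is fine.
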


Elements in $\ker \dd$ are called \emph{twisted Killing tensors} while elements in $\ker \mc{P} \dd$ are called \emph{twisted conformal Killing tensors}. We remark that while in the non-twisted case $\mc{P} \dd$ is equivariant under conformal rescalings of the metric (see \cite[Section 3.6]{Guillarmou-Paternain-Salo-Uhlmann-16}), the connection $\nabla^{\E}$ on the auxiliary bundles considered in this paper depends on the metric on $M$, and it is not entirely clear if the conformal equivariance is preserved. Despite this, and by analogy with the classical setting, we will still call elements of $\ker \mc{P} \dd$ twisted conformal Killing tensors. In the non-twisted case, when the metric has negative sectional curvatures, it is known that there are no non-trivial conformal Killing tensors for 
$k \geq 1$, see \cite{Dairbekov-Sharafutdinov-10, Heil-Moroianu-Semmelmann-16}. 

Investigating the (non-)existence of twisted conformal Killing tensors satisfying certain algebraic properties will play an important role in this article and this is due to the following observation: if $f \in C^\infty(SM,\pi^*\E) \cap \ker \X$, then $f$ has finite degree by \cite[Theorem 4.1]{Guillarmou-Paternain-Salo-Uhlmann-16}. Hence, we can decompose $f$ into Fourier modes $f=\widehat{f}_0 + \ldots + \widehat{f}_k$ with $k \geq 0$, $\widehat{f}_k \neq 0$, and we have $\X_+ \widehat{f}_k = 0$ by the mapping properties \eqref{eq:XX} of $\X$. By the previous Lemma \ref{lemma:link}, this implies the existence of a non-trivial trace-free twisted conformal Killing tensor of degree $k \geq 0$.

\section{Dynamics of the frame flow and topology of the frame bundle}

\label{section:dynamics}

The aim of this section is twofold: firstly, we introduce the \emph{transitivity group} in the context of principal bundle extensions of Anosov flows and state Proposition \ref{proposition:invariant}, which gives a bijection between the space of flow-invariant sections of (associated) vector bundles and fixed vectors of the (associated) representation action of the transitivity group. (This also appears in the context of non-Abelian Liv\v{s}ic theory, see \cite[Theorem 1.3 and Lemma 3.7]{Cekic-Lefeuvre-21-1}.) Secondly, in the case of the frame bundle of a negatively curved manifold we rule out most of such groups using the topology of structure group reductions of the unit sphere; for the remaining ones we construct corresponding flow-invariant structures over $SM$ and study their basic properties.

\subsection{Extensions of Anosov flows to principal bundles}

\label{ssection:dynamics1}

Frame flows on negatively curved manifolds are typical examples of partially hyperbolic systems arising as the extension on a principal bundle of an Anosov flow. This was originally studied by Brin \cite{Brin-75-1,Brin-75-2} (see also the survey \cite{Brin-82}) who treated the general case of a principal bundle over a manifold with transitive Anosov flow. In this paragraph, we consider the following setting: we let $\mc{M}$ be a smooth closed manifold equipped with a volume-preserving Anosov flow $(\varphi_t)_{t \in \R}$ and we let $X \in C^\infty(\mc{M}, T\mc{M})$ be its generator, $\pi : P \to \mc{M}$ is a principal $G$-bundle over $\mc{M}$ and $(\Phi_t)_{t \in \R}$ is an extension of $(\varphi_t)_{t \in \R}$ in the sense that it satisfies the relations: for all $t \in \R, g \in G$,
\begin{equation}
\label{equation:relations}
\pi \circ \Phi_t = \varphi_t \circ \pi, ~~~~ R_g \circ \Phi_t = \Phi_t \circ R_g,
\end{equation}
where $R_g : P \to P$ denotes the right-action in the fibers. 

In order to describe the flow $(\Phi_t)_{t \in \R}$, we adopt a slightly different point of view than the original approach of Brin \cite{Brin-75-1,Brin-75-2} and use the point of view of \cite{Cekic-Lefeuvre-21-1}. Proofs of the following facts can be found in \cite{Lefeuvre-21}. We fix an arbitrary periodic orbit $\gamma_\star \subset \mc{M}$ for the flow $(\varphi_t)_{t \in \R}$, of period $T_\star$, and we fix a point $x_\star \in \gamma_\star$. We let $\mc{H}$ be the set of \emph{homoclinic orbits} to $\gamma_\star$, namely the set of all orbits for the flow $(\varphi_t)_{t \in \R}$ converging in the past and in the future to the periodic orbit $\gamma_\star$. We then introduce \emph{Parry's free monoid} $\mathbf{G}$ as the following formal free monoid:
\[
\mathbf{G} := \left\{ \gamma_1 \cdot \ldots \cdot \gamma_k \mid  k \in \N, \forall i \in \left\{1,\ldots,k\right\}, \gamma_i \in \mc{H} \right\}.
\]

We denote by $W^{s,u}_{\mc{M}}$ the strong stable/unstable foliation of the flow on $\M$. Given $x \in \M$ and $x' \in W_{\M}^{s}(x)$, one can define for $x$ close to $x'$ a holonomy map $\mathrm{Hol}^s_{x \to x'} : P_x \to P_{x'}$ in the following way:
\begin{equation}
\label{equation:limit}
\mathrm{Hol}^s_{x \to x'} w := \lim_{t \to +\infty} \Phi_{-t} \circ C_{\varphi_t (x) \to \varphi_t(x')} \circ \Phi_t(w),
\end{equation}
where $C_{x_1 \to x_2} : P_{x_1} \to P_{x_2}$ is defined, for pairs of points that are close enough, as the parallel transport (with respect to an arbitrary connection on $P$) along the unique short geodesic (with respect to an arbitrary metric on $\M$) joining $x_1$ to $x_2$. Convergence of \eqref{equation:limit} is ensured by the Ambrose-Singer formula together with the fact that the distance between $\varphi_t(x)$ and $\varphi_t(x')$ converges exponentially fast to $0$, see \cite[Section 3.2.2]{Cekic-Lefeuvre-21-1} for instance.  Alternatively, one can define the holonomy map $w' := \mathrm{Hol}^s_{x \to x'}w$ as the unique point in the intersection
\[
W^s_{P}(w) \cap P_{x'} = \left\{w'\right\},
\]
where $W^{s,u}_{P}$ denotes the strong stable/unstable foliation in the principal bundle. Similarly, we define $\mathrm{Hol}^u_{x \to x'}$ for $x'\in W^u_{\M}(x)$ by taking the limit as $t \to -\infty$. Eventually, for $x'$ on the same flowline as $x$, there is also a natural holonomy map $\mathrm{Hol}^c_{x \to x'}$ given by the flow $(\Phi_t)_{t \in \R}$ itself. Let $P_\star := P_{x_\star}$ be the fiber over $x_\star$. After an arbitrary choice of point $w_\star \in P_\star$, the fiber $P_\star$ gets identified with the group by the map $G \to P_\star, g \mapsto R_g w_\star$.

This formalism allows to define a representation of the monoid $\mathbf{G}$. We call \emph{Parry's representation} the representation $\rho : \mathbf{G} \to G$ of the free monoid obtained by the following process. For $w \in G \simeq P_\star$, we set $\rho(\gamma_\star)w = \Phi_{T_\star}(w)$, and for $\gamma \in \mc{H}$ with $\gamma \neq \gamma_\star$, we pick two arbitrary points $x_1 \in W^{u}_{\M}(x_\star) \cap \gamma, x_2 \in W^{s}_{\M}(x_\star) \cap \gamma$ (where $x_{1,2}$ are chosen close to $x_\star$) and define 
\begin{equation}
\label{equation:parry}
\rho(\gamma)w := \mathrm{Hol}^s_{x_2 \to x_\star} \circ \mathrm{Hol}^c_{x_1 \to x_2} \circ \mathrm{Hol}^u_{x_\star \to x_1}w \in G.
\end{equation}
Note that $\rho(\gamma)$ commutes with the right action by \eqref{equation:relations} so it is a left action and can therefore be identified with an element of the group $G$ itself, namely $\rho(\gamma) \in G$. (Note also that $\rho(\gamma)$ is an isometry of $G$ equipped with a bi-invariant Riemannian metric.)

In \cite{Cekic-Lefeuvre-21-1} Parry's representation was defined by choosing a $k_n \to \infty$ such that we have $\Phi_{T_\star}(x_\star)^{k_n} \to \id_{x_\star}$, and specialising to $t = k_n T_\star$ in \eqref{equation:limit}; in particular, the two definitions agree. The following notion plays a central role and was identified by Brin \cite{Brin-75-1,Brin-75-2}:

\begin{definition}
The \emph{transitivity group} of the flow $(\Phi_t)_{t \in \R}$ is $H := \overline{\rho(\mathbf{G})}$.
\end{definition}

Observe that as $H$ is a closed subgroup of a compact Lie group, it is thus a Lie group \cite[Theorem 2.3]{Helgason-01}. Moreover, $H$ is independent of the choice of points $x_1,x_2$ in \eqref{equation:parry}. However, the transitivity group $H$ does depend on the choice of point $w_\star$ in $P_\star$ and changing $w_\star$ by $w_\star'$, one obtains another group $H'$ which is conjugate to $H$. It is clear that changing $x_\star$ to $\varphi_t(x_\star)$ for some $t$ corresponds to conjugating $H$ by the flow $\Phi_t$. A straightforward exercise based on the Anosov Closing Lemma shows that changing the periodic orbit $\gamma_\star$ to $\gamma_\star'$ also changes $H$ by a conjugation which is defined similarly to \eqref{equation:parry} where instead of a homoclinic orbit, we take a flow orbit converging to $\gamma_\star'$ in the future and to $\gamma_\star$ in the past. In other words, the transitivity group is only well-defined up to conjugacy. We observe that Brin does not use exactly the same definition of transitivity group but the two notions coincide (see \cite{Lefeuvre-21} for more details, where this is not explicitly proved but relations to holonomy along concatenations of arbitrary \emph{us-paths} and flowlines are given, see e.g. proofs of \cite[Lemmas 3.2 and 3.3]{Lefeuvre-21}). In what follows, ergodicity is considered with respect to the product of the volume on $\mc{M}$ and the Haar measure in the fibres.

\begin{proposition}
\label{proposition:brin}
There exists an $H$-principal bundle $Q \to \M$ such that $w_\star \in Q$, $Q \subset P$ is a smooth flow-invariant subbundle, and the restriction of $(\Phi_t)_{t \in \R}$ to $Q$ is ergodic. In particular, if $H = G$, then the flow $(\Phi_t)_{t \in \R}$ is ergodic.
\end{proposition}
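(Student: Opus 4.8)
The plan is to construct $Q$ as the closure of the orbit of $w_\star$ under all the holonomy maps, and to use the non-Abelian Livšic-type machinery together with ergodicity of the stable/unstable foliations of the Anosov base flow. First I would define, for each $x \in \M$, the fiber $Q_x \subset P_x$ as the set of all points reachable from $w_\star$ by finite chains of holonomies $\mathrm{Hol}^{s}$, $\mathrm{Hol}^{u}$, $\mathrm{Hol}^{c}$ (accessibility classes); this is the natural candidate for a flow-invariant subbundle. The key point is to show that $Q_x$ is nonempty for every $x$, that it is a single $H$-orbit in $P_x$ (where $H = \overline{\rho(\mathbf{G})}$ acts via the identification $P_\star \simeq G$), and that $Q = \bigcup_x Q_x$ is a smooth $H$-principal subbundle. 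Nonemptiness and minimality of $Q_x$ rely on the fact that the base flow $(\varphi_t)$ is transitive (it is Anosov and volume-preserving, hence by Anosov's theorem topologically transitive), so any two points of $\M$ can be joined by $\eps$-chains along stable/unstable leaves; transporting $w_\star$ along such chains and passing to the closure gives a well-defined $H$-orbit independent of the chain, precisely because two different chains from $x_\star$ back to $x_\star$ differ by an element of $\rho(\mathbf{G})$, whose closure is $H$.

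Next I would check flow-invariance and the principal bundle structure. Flow-invariance $\Phi_t(Q) = Q$ is immediate from the definition, since $\mathrm{Hol}^c$ (the flow itself) is one of the holonomies used to generate $Q_x$, and the three holonomy families commute appropriately with $\Phi_t$ by \eqref{equation:relations} and the intertwining $\Phi_t \circ \mathrm{Hol}^{s}_{x\to x'} = \mathrm{Hol}^{s}_{\varphi_t x \to \varphi_t x'} \circ \Phi_t$ coming from \eqref{equation:limit}. Right-$H$-invariance of $Q$ follows because the holonomy maps commute with $R_g$ for all $g \in G$, hence in particular for $g \in H$, and because $Q_\star = R_H w_\star$ by construction. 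Smoothness of $Q$ as a subbundle is the one genuinely technical point: it follows from the fact that the strong stable/unstable holonomies depend continuously (indeed Hölder-continuously) on the base point and that $Q$ is saturated by the stable, unstable, and flow foliations of $P$, so one obtains a continuous — and then, by a regularity bootstrap as in \cite{Brin-75-1,Brin-75-2} or \cite{Cekic-Lefeuvre-21-1}, smooth — reduction of the structure group from $G$ to $H$.

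Finally, ergodicity of $(\Phi_t)_{t\in\R}|_Q$ with respect to the natural measure on $Q$: here I would invoke the Hopf-type argument of Brin \cite{Brin-75-1,Brin-75-2}. Since $Q$ is generated from $w_\star$ by stable, unstable and flow holonomies, the flow on $Q$ is \emph{accessible} — any two points of $Q$ are joined by a path tangent to the strong stable, strong unstable and flow directions — and accessibility together with the local product structure and absolute continuity of the foliations forces any flow-invariant $L^2$ function on $Q$ to be constant along stable and unstable leaves, hence constant a.e. The main obstacle, and the step I expect to require the most care, is precisely this smoothness/regularity of $Q$ as a principal subbundle together with the measurable-to-continuous upgrade for invariant functions in the accessibility argument; once $Q$ is known to be a genuine $H$-subbundle carrying a smooth invariant measure projecting to the given measure on $\M$, ergodicity is the standard Brin argument, and the final sentence "if $H=G$ then $(\Phi_t)$ is ergodic" is then immediate since $Q = P$ in that case.
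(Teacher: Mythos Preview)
The paper does not actually prove Proposition \ref{proposition:brin}; it simply writes ``For a proof, we refer to \cite{Lefeuvre-21} (see also the papers of Brin \cite{Brin-75-1,Brin-75-2}).'' Your sketch is precisely the classical Brin construction that those references carry out: define $Q$ as the accessibility class of $w_\star$ under the $s$/$u$/$c$ holonomies, identify the fiber over $x_\star$ with $H = \overline{\rho(\mathbf{G})}$, and run a Hopf argument for ergodicity. So your approach matches what the paper points to.

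One caution: you assert that $Q$ is a \emph{smooth} subbundle after a ``regularity bootstrap''. In Brin's original framework the ergodic component is in general only H\"older (the stable/unstable holonomies of a partially hyperbolic system are H\"older, not smooth), and this already suffices for the proposition as stated and for ergodicity via the Hopf argument with absolutely continuous foliations. Smoothness of the associated invariant sections in this paper is obtained later and separately, via \cite[Theorem 1.4]{Bonthonneau-Lefeuvre-21} in the proof of Proposition \ref{proposition:invariant}, not as part of the construction of $Q$. So your outline is correct, but you should not build smoothness of $Q$ into the argument --- it is neither needed nor, in general, available at this stage.
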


For a proof, we refer to \cite{Lefeuvre-21} (see also the papers of Brin \cite{Brin-75-1,Brin-75-2}). From a topological perspective, the reduction to a subgroup $H \leqslant G$ is a strong constraint called a \emph{reduction of the structure group} of the principal bundle (see \S \ref{ssection:topology}). We remark that in the particular case of compact quotients of the complex hyperbolic plane, the transitivity group is equal to $\mathrm{U}(\frac{n}{2} - 1)$, see \cite{Cekic-Lefeuvre-Moroianu-Semmelmann-24} and references therein.

\subsection{Transitivity group of the frame flow}

\label{ssection:dynamics2}

We now specify the previous discussion to the case where $\M = SM$, $X$ is the geodesic vector field and $(\varphi_t)_{t \in \R}$ is the geodesic flow, $(\Phi_t)_{t \in \R}$ is the frame flow on the principal $\mathrm{SO}(n-1)$-bundle $FM \to SM$. A point $w \in FM_{(x,v)}$ over $(x,v) \in SM$ is seen from now on as an isometry $w : \R^{n-1} \to v^\bot$. For $g \in \mathrm{SO}(n-1)$, the right-action $R_g$ in the fibers is given by composition, namely $R_g w = w \circ g$.

We fix an arbitrary periodic point $(x_\star,v_\star)$ and set $\mc{N}_{\star} := \mc{N}(x_\star,v_\star)$. By Proposition \ref{proposition:brin}, the closure $H$ of the representation of Parry's free monoid $\rho : \mathbf{G} \to \mathrm{SO}(\mc{N}_\star) \simeq \mathrm{SO}(n-1)$ gives a flow-invariant reduction of the structure group of the frame bundle. Note that the identification $\R^{n-1} \simeq \mc{N}_\star$ is made by choosing an arbitrary isometry $w_\star : \R^{n-1} \to \mc{N}_\star$, and this corresponds to choosing a point in the frame bundle over $(x_\star,v_\star)$. Changing $w_\star$ by another isometry $w_\star'$, we would obtain another conjugate group $H'$. We start by the following observation, mostly due to Brin \cite{Brin-75-1}:

\begin{lemma}
\label{lemma:brin}
Let $(M,g)$ be a negatively curved Riemannian manifold of dimension $\geq 3$. Let $H \leqslant \mathrm{SO}(n-1)$ be the transitivity group of the frame flow and $H_0 \leqslant H$ be its identity component. Then, there is a finite Riemannian cover $(\widehat{M},\widehat{g}) \to (M,g)$ such that the frame flow has transitivity group equal to $H_0$. 
\end{lemma}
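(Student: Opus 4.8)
The plan is to construct $\widehat{M}$ directly from the flow-invariant reduction $Q$ provided by Proposition~\ref{proposition:brin}. Since $H$ is a compact Lie group, its identity component $H_0$ is an open normal subgroup and $F:=H/H_0$ is finite. First I would form the associated bundle $\bar{Q}:=Q/H_0$ over $SM$: because $H_0$ is open in $H$, the projection $\bar{q}\colon\bar{Q}\to SM$ is a covering map with finite fibre $F$, and since $(\Phi_t)_{t\in\R}$ is $H$-equivariant on $Q$ it descends to a flow on $\bar{Q}$ which, $\bar{q}$ being a covering, is simply the lift of the geodesic flow $(\varphi_t)_{t\in\R}$. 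By Proposition~\ref{proposition:brin} this descended flow is a factor of the ergodic flow on $Q$, hence ergodic; since a continuous flow cannot permute the connected components of a covering, $\bar{Q}$ must be connected, so $\bar{q}$ is a connected finite cover of $SM$.

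Next, the hypothesis $n\geq 3$ enters: the fibre $\Ss^{n-1}$ of $\pi\colon SM\to M$ is then simply connected, so $\pi_*\colon\pi_1(SM)\to\pi_1(M)$ is an isomorphism and every finite cover of $SM$ is the pullback along $\pi$ of a finite cover of $M$. Let $q\colon(\widehat{M},\widehat{g})\to(M,g)$, with $\widehat{g}=q^*g$, be the finite Riemannian cover with $\pi^*\widehat{M}=\bar{Q}$ as covers of $SM$; then $S\widehat{M}=\bar{Q}$, the geodesic flow of $\widehat{g}$ coincides with the flow on $\bar{Q}$ constructed above, and, $q$ being a local isometry, $F\widehat{M}=q^*FM$ with frame flow equal to the lift of $(\Phi_t)_{t\in\R}$.

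It remains to compute the transitivity group $\widehat{H}\leqslant\mathrm{SO}(n-1)$ of the frame flow of $\widehat{g}$. For the inclusion $\widehat{H}\leqslant H_0$ I would pull $Q$ back to a flow-invariant $H$-subbundle $\widehat{Q}:=q^*Q\subset F\widehat{M}$ and note that $\widehat{Q}/H_0=q^*\bar{Q}=\bar{Q}\times_{SM}\bar{Q}$ carries the diagonal section, which is flow-invariant because the flow on $\bar{Q}$ lifts $(\varphi_t)_{t\in\R}$; this section reduces $\widehat{Q}$ to a flow-invariant $H_0$-subbundle of $F\widehat{M}$, and since Parry's representation \eqref{equation:parry} computed inside a flow-invariant $K$-subbundle automatically takes values in $K$, we obtain $\widehat{H}\leqslant H_0$ (after the corresponding choice of base frame). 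For the reverse inclusion I would compare Parry's representations of $\widehat{M}$ and $M$: fixing a periodic point of the geodesic flow of $\widehat{g}$ over $(x_\star,v_\star)$ and identifying normal spaces through $q$, a homoclinic orbit of $\widehat{M}$ projects to a homoclinic orbit of $M$ with the same image under Parry's representation (all holonomies in \eqref{equation:parry} being local), whereas a homoclinic orbit $\gamma$ of $M$ lifts to a homoclinic orbit of $\widehat{M}$ precisely when the loop it defines in $SM$ has trivial monodromy for $\bar{q}$, which by construction of $\bar{Q}=Q/H_0$ equals the coset $\rho(\gamma)H_0\in F$. Thus the liftable homoclinic orbits are exactly those $\gamma$ with $\rho(\gamma)\in H_0$; since $H_0$ is open in $H$ and $\{\rho(\gamma)\colon\gamma\in\mc{H}\}$ is dense in $H$ (by the usual shadowing/concatenation of homoclinic orbits), the corresponding values $\rho(\gamma)$ are dense in $H_0$, whence $\widehat{H}\supseteq H_0$ and finally $\widehat{H}=H_0$.

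The soft parts are the finiteness of $F$, the connectedness of $\bar{Q}$ via ergodicity, the simple-connectivity input $\pi_1(\Ss^{n-1})=0$ (which is exactly where $n\geq 3$ is used), the tautological $H_0$-reduction on the cover, and the density argument using that $H_0$ is open in $H$. The hard part will be the monodromy identification in the last step --- namely that the component-group monodromy of the covering $\bar{Q}=Q/H_0\to SM$ around a homoclinic loop is the coset $\rho(\gamma)H_0$, equivalently that Parry's monoid of $\widehat{M}$ maps onto $\{g\in\mathbf{G}\colon\rho(g)\in H_0\}$. This is not formal: it requires unwinding how the flow-invariant bundle $Q$ and the representation $\rho$ are built and verifying their compatibility (cf.\ \cite{Brin-75-1,Lefeuvre-21}).
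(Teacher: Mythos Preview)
Your proposal is correct and follows essentially the same route as the paper: form the finite connected cover $Q/H_0\to SM$ (connectedness via ergodicity of $Q$), use $\pi_1(SM)\simeq\pi_1(M)$ from $n\geq 3$ to descend it to a Riemannian cover $\widehat{M}\to M$, and then identify the transitivity group on $\widehat{M}$ with $H_0$. The only difference is that the paper simply asserts the final identification (``it suffices to observe that the frame flow on $S\widehat{M}$ has transitivity group equal to $H_0$''), whereas you spell out both inclusions --- the diagonal-section $H_0$-reduction for $\widehat{H}\leqslant H_0$, and the monodromy/density argument for $H_0\leqslant\widehat{H}$ --- and correctly flag the monodromy identification $\rho(\gamma)H_0$ as the point requiring care.
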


The conclusion also holds by replacing the negatively curved assumption by Anosov.

\begin{proof}
Let $w_\star \in FM$ and $Q(w_\star)$ be its ergodic component given by Proposition \ref{proposition:brin}. Since the restriction of the flow to $Q(w_\star)$ is transitive, the bundle $Q(w_\star) \to SM$ is a connected $H$-principal bundle and thus $Q(w_\star)/H_0 \to SM$ is a finite cover of $SM$ with deck transformation group $G := H/H_0$. Now, by the long exact sequence in homotopy \cite[Theorem 4.41]{Hatcher-02}, we have:
\[
\ldots \longrightarrow \underbrace{\pi_1(\Ss^{n-1})}_{=0} \longrightarrow \pi_1(SM) \longrightarrow \pi_1(M) \longrightarrow \underbrace{\pi_0(\Ss^{n-1})}_{=0} \longrightarrow \ldots,
\]
that is $\pi_1(SM) \simeq \pi_1(M)$. Thus there is a subgroup $\Gamma \leqslant \pi_1(M)$ such that $\widehat{M} := \widetilde{M}/\Gamma$ is a Riemannian cover of $M$ (equipped with the pullback metric) with deck transformation group $G$ and a diffeomorphism $F : S\widehat{M} \to Q(w_\star)/H_0$; here $\widetilde{M}$ denotes the universal cover of $M$. Moreover, the geodesic flow on $S\widehat{M}$ and the frame flow on $Q(w_\star)/H_0$ are $G$-equivariant and both project to the geodesic flow on $SM$ so they are conjugate by $F$. Finally, it suffices to observe that the frame flow on $S\widehat{M}$ has transitivity group equal to $H_0$. 
\end{proof}

By construction, the group $H \leqslant \mathrm{SO}(\mc{N}_\star)$ acts on $\mc{N}_\star$ and thus on any vector space obtained functorially from $\mc{N}_\star$. In other words, if $\mathrm{Vec}$ denotes the category of finite-dimensional vector spaces equipped with an inner product and
\[
	\mathfrak{o} : \mathrm{Vec} \to \mathrm{Vec}
\]
is an operation of (finite-dimensional) vector spaces (e.g. exterior power, symmetric or tensor power, dual), we obtain an induced representation
\[
	\rho_{\mathfrak{o}} : H \to \mathrm{SO}(\mathfrak{o}(\mc{N}_{\star})),\quad \rho_{\mathfrak{o}}(h) = \mathfrak{o}(\rho(h)),\quad h \in H,
\]
where $\rho$ is identified with the inclusion $H \xhookrightarrow{} \mathrm{SO}(\mc{N}_\star)$. In fact, it is easy to see that $\rho_{\mathfrak{o}}(H)$ is the image of Parry's representation as in \eqref{equation:parry}, where $P$ is replaced by the vector bundle $\mathfrak{o}(\mc{N})$, and the flow $(\Phi_t)_{t \in \mathbb{R}}$ is replaced by parallel transport along orbits of $(\varphi_t)_{t \in \mathbb{R}}$. We define:
\[
\mathfrak{f}_{\mathfrak{o}} := \left\{\omega \in \mathfrak{o}(\mc{N}_{\star}) \mid \forall h \in H, \rho_{\mathfrak{o}}(h)\omega = \omega\right\},
\]
i.e. these are all the elements of $\mathfrak{o}(\mc{N}_\star)$ which are invariant by the induced action of $H$.

\begin{proposition}
\label{proposition:invariant}
Let $\mathfrak{o} : \mathrm{Vec} \to \mathrm{Vec}$ be an operation of vector spaces. Then, the evaluation map:
\[
\Phi : C^\infty(SM, \mathfrak{o}(\mc{N})) \cap \ker \X \to \mathfrak{f}_{\mathfrak{o}}, ~~~ \Phi(f) := f(x_\star,v_\star),
\]
is an isomorphism. 
\end{proposition}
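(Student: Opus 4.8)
The plan is to show that $\Phi$ is well-defined, injective, and surjective, using the fact (from \cite{Guillarmou-Paternain-Salo-Uhlmann-16}) that sections in $\ker \X$ have finite Fourier content together with the holonomy description of the transitivity group from \S\ref{ssection:dynamics1}.

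\emph{Well-definedness and injectivity.} First I would observe that if $f \in C^\infty(SM, \mathfrak{o}(\mc{N})) \cap \ker \X$, then $f$ is invariant under parallel transport along the flow, and more importantly along the strong stable and unstable holonomies $\mathrm{Hol}^{s,u}$: indeed, the defining limit \eqref{equation:limit} for $\mathrm{Hol}^{s}_{x \to x'}$ together with $\X f = 0$ (which says $f$ is constant along flow lines in the appropriate trivialization) forces $f(x') = \mathrm{Hol}^s_{x\to x'} f(x)$, by the same Ambrose--Singer/exponential-contraction argument that ensures convergence of the limit. Hence $f$ is invariant under all the holonomy maps appearing in Parry's representation \eqref{equation:parry}, so $f(x_\star, v_\star) = \rho_{\mathfrak{o}}(\gamma) f(x_\star,v_\star)$ for every $\gamma \in \mathbf{G}$, and by continuity and density $f(x_\star,v_\star) \in \mathfrak{f}_{\mathfrak{o}}$; this shows $\Phi$ lands in $\mathfrak{f}_{\mathfrak{o}}$. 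For injectivity: the value $f(x_\star, v_\star)$ determines, via the holonomy maps $\mathrm{Hol}^{s,u,c}$, the value of $f$ at every point of the orbit closure $\overline{\mc{H}} \cup \gamma_\star$-saturation; but by Proposition \ref{proposition:brin} the flow restricted to the reduction $Q$ is ergodic, and the homoclinic orbits to $\gamma_\star$ are dense in $SM$ (a standard consequence of the Anosov property and topological transitivity, using the Anosov closing lemma and the density of $W^{s/u}$ leaves). Thus $f$ is determined on a dense set by $f(x_\star,v_\star)$, and continuity gives $f \equiv 0$ when $\Phi(f) = 0$.

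\emph{Surjectivity.} Given $v_0 \in \mathfrak{f}_{\mathfrak{o}}$, I would define a candidate section $f$ on the dense set of points reachable from $(x_\star,v_\star)$ by strings of stable, unstable and central holonomies, by transporting $v_0$. The compatibility of these definitions — i.e. that $f$ is single-valued — is exactly the statement that $v_0$ is fixed by $\rho_{\mathfrak{o}}(\mathbf{G})$, hence by $H$; this is where the hypothesis $v_0 \in \mathfrak{f}_{\mathfrak{o}}$ is used. One then needs to check that this $f$ extends to a smooth (or at least continuous, then bootstrap) section on all of $SM$ and satisfies $\X f = 0$. The cleanest route is to invoke the non-Abelian Liv\v sic machinery of \cite{Cekic-Lefeuvre-21-1}: the assignment $v_0 \mapsto (\text{transport of } v_0)$ defines a measurable, flow-invariant and holonomy-invariant section, and the Liv\v sic-type regularity theorem upgrades it to a smooth one; alternatively, work on the finite cover from Lemma \ref{lemma:brin} where $H$ is connected and the reduction $Q$ is an honest $H_0$-bundle, build $f$ as a genuine parallel section of the associated bundle $Q \times_{H_0} \mathfrak{o}(\mc{N}_\star)$, and descend.

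\emph{Main obstacle.} The routine parts are the holonomy-invariance of $f$ and the bookkeeping with $\mathfrak{f}_{\mathfrak{o}}$; the delicate point is the \emph{regularity} in the surjectivity direction — showing the a priori only-measurable invariant section built by transporting $v_0$ is actually smooth. This is precisely the content of the non-Abelian Liv\v sic theorem invoked from \cite{Cekic-Lefeuvre-21-1,Lefeuvre-21}, so I expect the proof to reduce this step to citing that result, after checking its hypotheses (volume-preserving transitive Anosov flow, the section is bounded and invariant under stable/unstable/central holonomies). Once smoothness is in hand, $\X f = 0$ is immediate from flow-invariance.
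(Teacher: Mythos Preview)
Your proposal is correct and, for the surjectivity half, follows essentially the same route as the paper: transport the $H$-fixed vector along homoclinic orbits, use $H$-invariance to guarantee single-valuedness, and then bootstrap regularity. The paper is slightly more concrete about that last step --- it first shows the section is Lipschitz on the dense union of homoclinic orbits (via the local product structure, \cite[Lemma 3.21]{Cekic-Lefeuvre-21-1}) and then invokes the smoothness result \cite[Theorem 1.4]{Bonthonneau-Lefeuvre-21}, using that the threshold of $\X$ is zero because the connection is orthogonal --- rather than a black-box ``non-Abelian Liv\v sic'' citation, but the content is the same.

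Two small remarks. First, the finite-Fourier-content result from \cite{Guillarmou-Paternain-Salo-Uhlmann-16} that you announce at the outset is never actually used in your argument (nor in the paper's); you can drop it. Second, your injectivity argument, while correct, is more elaborate than needed: the paper simply observes that $\X f = 0$ gives $X|f|^2 = 2\langle \X f, f\rangle = 0$, so $|f|^2$ is a geodesic-flow-invariant function on $SM$, hence constant by ergodicity; thus $f(x_\star,v_\star)=0$ forces $f\equiv 0$ in one line, without needing holonomy invariance or density of homoclinic orbits.
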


In other words, if the induced representation on $\mathfrak{o}(\mc{N}_\star)$ fixes a vector, then there exists a unique flow-invariant section of $\mathfrak{o}(\mc{N}) \to SM$ whose value at $(x_\star,v_\star)$ is given by that vector. We only sketch the proof, which can be found in \cite[Lemma 3.7]{Cekic-Lefeuvre-21-1}. The presence of the operation $\mathfrak{o}$ is purely functorial, so it is harmless to assume that $\mathfrak{o}=\mathbbm{1}$ (identity) and to work with $\mc{N}_\star$.

\begin{proof}[Idea of proof]
The fact that the map $\Phi$ is well-defined is almost tautological. The injectivity of $\Phi$ is immediate. Indeed, if $\X f = 0$, we have $X|f|^2 = 2 \langle \X f, f \rangle = 0$ that is $|f|^2$ is constant by ergodicity of the geodesic flow. Hence if $\Phi(f) = f(x_\star,v_\star)=0$, we get $f=0$. Let us now show surjectivity. We consider a vector $e_\star \in \mc{N}_\star$ such that for any homoclinic orbit $\gamma \in \mathbf{G}$, we have $\rho(\gamma) e_\star = e_\star$. We denote by $(\Phi_t^{(2)})_{t \in \R}$ the flow on $2$-frames (i.e. the parallel transport along the geodesic flow of a vector that is orthogonal to the geodesic). Given $\gamma \in \mathbf{G}$, we can then define the invariant section $f$ on $\gamma$ in the following way: consider an arbitrary point $(x_0,v_0) \in \gamma \cap W^{u}_{SM}(x_\star,v_\star)$ and define for $(x,v) \in \gamma$, $f(x,v) := \Phi_t^{(2)} \circ \mathrm{Hol}^u_{(x_\star, v_\star) \to (x_0,v_0)} e_\star$, where $t \in \R$ is such that $\varphi_t(x_0,v_0) = (x,v)$. Using that $e_\star$ is preserved by the holonomy along $\gamma_\star$, one can check that this definition is independent of the choice of $(x_0,v_0)$. Moreover, $f$ is obviously flow-invariant on $\gamma$ by construction.

The union of all homoclinic orbits turns out to be dense in $SM$ by the Anosov Closing Lemma. Following the same proof as in \cite[Lemma 3.21]{Cekic-Lefeuvre-21-1} relying on the local product structure of the geodesic flow, one can then show that $f$ is Lipschitz-continuous on the set of all homoclinic orbits (the key point in this proof is that $e_\star$ satisfies $h \cdot e_\star = e_\star$ for all $h \in H$; this implies that $f(x,v)$ is also equal to $\Phi_{t'}^{(2)} \circ \mathrm{Hol}^s_{(x_\star, v_\star) \to (x_1,v_1)} e_\star$, where $(x_1,v_1) \in \gamma \cap W^s_{SM}(x_\star,v_\star)$ and $t' \in \R$ is such that $(x,v) = \varphi_{t'}(x_1,v_1)$). Since homoclinic orbits are dense in $SM$, this shows that $f$ extends as a Lipschitz-continuous section $f \in C^{\mathrm{Lip}}(SM, \mc{N})$ that is flow-invariant, i.e. such that $\X f = 0$. Since the \emph{threshold}\footnote{The threshold in this case is the exponential growth of the norm of the propagator $e^{t\X}$ acting on $L^\infty(SM,\mc{N})$. The connection being orthogonal, this operator is unitary, that is $\|e^{t\X}\|_{L^\infty \to L^\infty} \leq 1$.} of the operator $\X$ is $0$, we can apply \cite[Theorem 1.4]{Bonthonneau-Lefeuvre-21} to obtain that $f$ is actually smooth.
\end{proof}

To conclude, let us make the following important remark:

\begin{remark}
The \emph{type} of an $H$-invariant object is always preserved. For instance, if $n=8$, $H = \mathrm{G}_2$, $\mathfrak{o} = \Lambda^3$, then $H$ fixes an invariant vector $f_\star \in \Lambda^3 \mc{N}_\star$, the 3-form defining the $\mathrm{G}_2$-structure. Consider the section $f \in C^\infty(SM,\Lambda^3 \mc{N})$ such that $\X f = 0$, $f(x_\star,v_\star)=f_\star$, provided by Proposition \ref{proposition:invariant}. Then $f(x,v)$ is a $3$-form with $\mathrm{G}_2$-stabilizer on $\mc{N}(x,v)$ for all $(x,v) \in SM$. This can be easily seen by the following argument. First of all, in order to define $f_\star$, one has first chosen an implicit arbitrary isometry $w_\star : \R^{7} \to \mc{N}(x_\star,v_\star)$ and then $f_\star := w_\star(\xi_0)$, where $\xi_0$ is the $3$-form $\mathrm{G}_2$-structure on $\R^{7}$. The isometry $w_\star$ is also a point in the frame bundle over $SM$. By Proposition \ref{proposition:brin}, there exists a (unique) principal $\mathrm{G}_2$-bundle $Q \to SM$ which is flow-invariant and such that $w_\star \in Q$. Let $(x,v) \in SM$ and $w_{(x,v)} \in Q_{(x,v)}$ seen as an isometry $w_{(x,v)} : \R^{7} \to \mc{N}(x,v)$. Then we get an induced isometry $w_{(x,v)} : \Lambda^3 \R^{7} \to \Lambda^3 \mc{N}(x,v)$ and we claim that $f(x,v) = w_{(x,v)}(\xi_0)$. Indeed, this is clearly independent of the choice of point $w_{(x,v)} \in Q_{(x,v)}$ since any other point $w'_{(x,v)}$ is obtained as $w'_{(x,v)} = w_{(x,v)} \circ h$, where $h \in \mathrm{G}_2$ and $w'_{(x,v)}(\xi_0) = w_{(x,v)}(h\xi_0) = w_{(x,v)}(\xi_0)$. Moreover, this is flow-invariant since $Q$ is flow-invariant and $w_{(x,v)}(\xi_0)$ agrees with $f$ at $(x_\star,v_\star)$ so by Proposition \ref{proposition:invariant} they are equal.

\end{remark}

\subsection{Topological reduction of the structure group}

\label{ssection:topology}

We refer to \cite{Davis-Kirk-01} for the background in algebraic topology. On the sphere $\Ss^n$ (for $n \geq 2$), the topology of principal bundles is easier than on general CW-complexes since it does not require the use of classifying spaces. A principal $G$-bundle $P \to \Ss^n$ is determined by the homotopy class of its \emph{classifying map} $\tau : \Ss^{n-1} \to G$. We note here that $\tau$ is also known as the \emph{clutching map} in the literature and that it specifies how to glue two trivial $G$-principal bundles over upper and lower hemispheres of $\mathbb{S}^n$ along the equator $\Ss^{n-1}$. (The usual classifying map of $P$ from homotopy theory is a map from $\mathbb{S}^n$ to the classifying space of $G$; since the base is a sphere, such maps are in bijection with maps $\mathbb{S}^{n-1} \to G$, up to homotopy.) We will use the following standard terminology of algebraic topology:

\begin{definition}
\label{definition:structure}
Let $P \to \Ss^n$ be a principal $G$-bundle over $\Ss^n$, where $G$ is a compact Lie group. We say that $P$ admits a reduction\footnote{Note that in the literature, the letter for the group reduction is $G$ and the usual terminology is that of a \emph{$G$-structure}. In order to avoid a notational clash with the letter used for the transitivity group, we stick to the letter $H$.} of its structure group to $(H,\rho)$, where $H$ is a compact Lie group and $\rho : H \to G$ is a homomorphism, if the classifying map $\tau : \Ss^{n-1} \to G$ can be factored (up to homotopy) through the map $\rho$, that is, there exists a classifying map $\tau_0$ such that the following diagram commutes up to homotopy: 
\[
\xymatrix{
     & H \ar[d]^{\rho} \\
    \Ss^{n-1} \ar[ur]^{\tau_0} \ar[r]_\tau & G
  }
\]
\end{definition}

Another accepted (but not exactly equivalent) definition is to restrict only to subgroups $H \leqslant G$ and then $\rho = \iota$ where $\iota : H \to G$ is the embedding of $H$ into $G$.
If we had added the requirement that $\rho$ is injective in Definition \ref{definition:structure} these would be equivalent, but it is sometimes useful to allow for non-injective homomorphisms $\rho$. 
We will need the following:

\begin{lemma}
\label{lemma:lift}
Assume that $n\ge 3$ and let $\tau : \Ss^{n-1} \to G$ be a principal $G$-bundle over $\Ss^n$, identified with its classifying map. If $(H,\rho)$ is a reduction of the structure group, $H$ is a compact semisimple\footnote{Recall that a compact connected Lie group $H$ is said to be semisimple if it possesses no Abelian connected normal subgroups other than $\left\{1\right\}$.} Lie group, and $\pi : \widetilde{H} \to H$ is a cover of $H$, then $(\widetilde{H},\rho \circ \pi)$ is a reduction of the structure group.
\end{lemma}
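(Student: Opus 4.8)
The point is that a principal $G$-bundle over $\Ss^n$ is classified by $\tau \in \pi_{n-1}(G)$, and a reduction to $(H,\rho)$ means $\tau$ lifts to $\tau_0 \in \pi_{n-1}(H)$ with $\rho_* \tau_0 = \tau$. We must produce $\tilde\tau_0 \in \pi_{n-1}(\widetilde H)$ with $(\rho \circ \pi)_* \tilde\tau_0 = \tau$, i.e.\ $\rho_* \pi_* \tilde\tau_0 = \tau$. Since $\pi : \widetilde H \to H$ is a covering homomorphism, it induces isomorphisms $\pi_* : \pi_k(\widetilde H) \xrightarrow{\sim} \pi_k(H)$ for all $k \ge 2$. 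The hypothesis $n \ge 3$ guarantees $n-1 \ge 2$, so $\pi_* : \pi_{n-1}(\widetilde H) \to \pi_{n-1}(H)$ is an isomorphism, and we may simply set $\tilde\tau_0 := (\pi_*)^{-1}(\tau_0)$. Then $(\rho \circ \pi)_* \tilde\tau_0 = \rho_* \pi_* (\pi_*)^{-1} \tau_0 = \rho_* \tau_0 = \tau$, which is exactly the statement that $(\widetilde H, \rho \circ \pi)$ is a reduction of the structure group.

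First I would record the translation between Definition \ref{definition:structure} and homotopy groups: a classifying map $\tau : \Ss^{n-1} \to G$ factors up to homotopy through $\rho : H \to G$ if and only if $[\tau] \in \pi_{n-1}(G)$ lies in the image of $\rho_* : \pi_{n-1}(H) \to \pi_{n-1}(G)$ — here basepoint subtleties are irrelevant because $G$ and $H$ are connected (being compact Lie groups, with $H$ semisimple hence connected) so free and based homotopy classes into them coincide up to the action of $\pi_1$, and in any case one can choose the lift $\tau_0$ to be based. Next I would invoke the standard fact that a covering map $\pi : \widetilde H \to H$ of topological groups has discrete kernel and hence is a fibration with discrete fiber, so the long exact sequence of the fibration gives $\pi_k(\widetilde H) \cong \pi_k(H)$ for $k \ge 2$ and an injection $\pi_1(\widetilde H) \hookrightarrow \pi_1(H)$. (The semisimplicity of $H$ is what makes covers like $\widetilde H$ relevant and finite, e.g.\ $\mathrm{Spin} \to \mathrm{SO}$, but strictly speaking the argument only uses that $\pi$ is a covering homomorphism and $n - 1 \ge 2$.) Then the lift $\tilde\tau_0$ is defined and the diagram chase above closes the argument.

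There is essentially no main obstacle here — this is a soft homotopy-theoretic lemma. The only point requiring a word of care is the basepoint/connectedness bookkeeping in passing between "factors through $\rho$ up to homotopy" and "lies in $\mathrm{im}\,\rho_*$ in $\pi_{n-1}$", and the hypothesis $n \ge 3$ is exactly what is needed so that the covering-space isomorphism $\pi_* : \pi_{n-1}(\widetilde H) \to \pi_{n-1}(H)$ holds (for $n = 2$ one would only get injectivity of $\pi_* : \pi_1(\widetilde H) \to \pi_1(H)$, which is insufficient). I would present the proof in three short steps: (i) reformulate reduction of structure group as an image condition on $\pi_{n-1}$; (ii) note $\pi_*$ is an isomorphism on $\pi_{n-1}$ for $n \ge 3$ since $\pi$ is a covering; (iii) lift $\tau_0$ through $(\pi_*)^{-1}$ and conclude.
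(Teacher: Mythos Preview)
Your proposal is correct and takes essentially the same approach as the paper. The paper's proof is a single line---``Since $\pi_1(\Ss^{n-1}) = 0$, the existence of such a lift is immediate''---invoking the covering-space lifting criterion directly, whereas you phrase the same fact via $\pi_* : \pi_{n-1}(\widetilde H) \xrightarrow{\sim} \pi_{n-1}(H)$ from the long exact sequence; these are equivalent formulations of the same idea, and your observation that semisimplicity is not strictly needed is also correct.
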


\begin{proof}
Since $\pi_1(\Ss^{n-1}) = 0$, the existence of such a lift is immediate.
\end{proof}

Recall that a section $f \in C^\infty(SM,\pi^*\E)$ is said to be \emph{odd} or \emph{even} if its Fourier degrees are either all odd or even and invariant (or flow-invariant) if it satisfies $\X f = 0$. If $(M,g)$ has negative sectional curvature (or, more generally, transitive geodesic flow) any flow-invariant object has constant norm on $SM$ since $X|f|^2 = 2 \langle \X f, f \rangle = 0$ and the geodesic flow is ergodic. A typical flow-invariant odd object on the unit tangent bundle is the \emph{tautological section} $s \in C^\infty(SM,\pi^*TM)$ given by $s(x,v) := v$. It can be easily checked that it satisfies $\X s = 0$. In order not to burden the notation, we will simply denote it by $v$. 

Other natural flow-invariant sections appear in the setting of reduced holonomy. Let us give a motivating example in the case when $(M, g)$ has a compatible K\"ahler structure $J$. Then $f_1(x, v): = J_x v$ is a flow-invariant section of $\mc{N}$ of degree $1$, and $f_2(x, v): = \langle{ J_x v,\cdot}\rangle J_x v$ is a flow-invariant section of $\Sym^2\mc{N}$ of degree $2$, like in the second item of the theorem below (cf. also  \cite[Section 4.2]{Cekic-Lefeuvre-Moroianu-Semmelmann-22}). Similarly, if $(M,g)$ has a quaternion-Kähler structure, e.g. compact quotients of the quaternionic hyperbolic space, with Kraines form (see \cite[14.92]{Besse} or \cite{Kraines}) denoted by $\Omega\in C^\infty({M},\Lambda^4 TM)$, then $f_3(x,v):=\imath_v \Omega$, the contraction of $\Omega$ with $v$, is a flow-invariant section of $\Lambda^3\mc{N}$ of degree $1$.

The following statement shows that the non-ergodicity of the frame flow implies the existence of new invariant objects on the unit tangent bundle.
Recall that the Radon-Hurwitz numbers $\rho(n)$ are defined as follows: writing $n = (2a+1)2^b$, $b = c + 4d$ with $0 \leq c \leq 3$, we have $\rho(n) := 2^c + 8d$. The number $\rho(n)-1$ corresponds to the maximal number of linearly independent vector fields on the sphere $\Ss^{n-1}$, see \cite{Adams-62}. Note that $\rho(8)-1=7$, and for $\ell \in \mathbb{Z}_{\geq 0}$, we have $\rho(4\ell+2)-1 = 1, \rho(8\ell+4)-1=3$.

\begin{theorem}
\label{theorem:reduction}
Let $(M^n,g)$ be a smooth closed negatively curved Riemannian manifold of even dimension or dimension $7$. If the frame flow is not ergodic, then there exists a finite Riemannian cover $(\widehat{M},\widehat{g}$) such that the following holds. If $n \neq 7,8,134$, then:
\begin{itemize}
\item If $n=4$ or $n \equiv 2$ mod $4$, there exists an invariant unit vector field $f \in C^\infty(S\widehat{M},\mc{N})$ of odd degree,
\item If $n \equiv 0$ mod $4$, there exists an invariant orthogonal projector $f \in C^\infty(S\widehat{M},\Sym^2 \mc{N})$ of even degree such that $1 \leq \mathrm{rank}(f) \leq \min\big(\rho(n)-1,(n-2)/2\big)$.
\end{itemize}
In the three exceptional cases, we have:
\begin{itemize}
\item If $n=7$, there exists an invariant complex structure $f \in C^\infty(S\widehat{M},\Lambda^2 \mc{N})$ of odd degree,
\item If $n=8$, there exists an invariant $\mathrm{G}_2$-structure $f \in C^\infty(S\widehat{M},\Lambda^3 \mc{N})$ of odd degree or there exists an invariant orthogonal projector $f \in C^\infty(S\widehat{M},\Sym^2 \mc{N})$ of even degree such that $1 \leq \mathrm{rank}(f) \leq 3$,
\item If $n=134$, there exists a non-zero invariant Lie bracket\footnote{Here by Lie bracket we mean a $3$-form $\omega$ such that the bracket $[\cdot,\cdot]$ defined by $\langle [a,b],c\rangle:=\omega(a,b,c)$ satisfies the Jacobi identity.} $f \in C^\infty(S\widehat{M},\Lambda^3 \mc{N})$ of odd degree $\geq 3$ or there exists an invariant unit vector field $f \in C^\infty(S\widehat{M},\mc{N})$ of odd degree.
\end{itemize}
In all cases, the invariant elements have finite Fourier degree \cite{Guillarmou-Paternain-Salo-Uhlmann-16}.
\end{theorem}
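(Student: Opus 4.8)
The plan is to extract the invariant structure directly from the transitivity group and then obstruct its structure group topologically. First, suppose the frame flow is not ergodic. Then by Proposition~\ref{proposition:brin} its transitivity group $H\leqslant\mathrm{SO}(n-1)$ is a proper closed subgroup, and by Lemma~\ref{lemma:brin} we may pass to a finite Riemannian cover $\widehat{M}\to M$ after which $H$ is connected, still a proper connected closed subgroup of $\mathrm{SO}(n-1)$, and the frame flow on $S\widehat M$ is still non-ergodic. By \S\ref{ssection:dynamics2}--\S\ref{ssection:topology}, the flow-invariant $H$-subbundle of $F\widehat M$ restricts over any spherical fibre $S_x\widehat M\cong\Ss^{n-1}$ to a reduction of the structure group of the restricted $\mathrm{SO}(n-1)$-bundle; but that bundle is precisely the positively oriented orthonormal frame bundle of $T\Ss^{n-1}$, so its classifying map — the clutching function $\tau\in\pi_{n-2}(\mathrm{SO}(n-1))$ of $T\Ss^{n-1}$ — factors up to homotopy through $H\hookrightarrow\mathrm{SO}(n-1)$ in the sense of Definition~\ref{definition:structure}, and by Lemma~\ref{lemma:lift} we are free to replace $H$ by a cover when it is semisimple.

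The central step is then a classification of the connected proper subgroups of $\mathrm{SO}(n-1)$ through which $\tau$ factors. Since passing to a larger subgroup only weakens the conclusion, I would reduce to $H$ maximal among connected proper subgroups admitting such a factorisation and invoke the classical list of maximal connected subgroups of $\mathrm{SO}(m)$: the block-diagonal groups $\mathrm{SO}(k)\times\mathrm{SO}(m-k)$, $1\le k\le m/2$, and the irreducible ones, which by the representation theory of compact groups are images of irreducible orthogonal representations — here the relevant ones are $\mathrm{U}(m/2)$, $\mathrm{Sp}(m/4)\cdot\mathrm{Sp}(1)$, $\mathrm{G}_2\subset\mathrm{SO}(7)$, $\mathrm{Spin}(7)\subset\mathrm{SO}(8)$, $\mathrm{Spin}(9)\subset\mathrm{SO}(16)$, and simple groups acting through their adjoint (or another small irreducible) representation, in particular $\mathrm{Ad}\colon E_7\to\mathrm{SO}(133)$, the source of the dimension $n=134$ since $\dim E_7=133$. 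For each candidate one decides by obstruction theory whether $\tau$ lifts: one uses that $[\tau]=0$ iff $\Ss^{n-1}$ is parallelisable (i.e. $n\in\{2,4,8\}$), the known order and stabilisation of $[\tau]$ in $\pi_{n-2}(\mathrm{SO}(n-1))$, and the map $\pi_{n-2}(H)\to\pi_{n-2}(\mathrm{SO}(n-1))$; in the reducible case one further uses classical facts on vector bundles over spheres and the Radon--Hurwitz numbers \cite{Adams-62}: a splitting $T\Ss^{n-1}=E_1\oplus E_2$ with $\mathrm{rk}\,E_1=k\le(n-2)/2$ forces $k\le\rho(n)-1$. This should leave exactly the families in the statement, with $n=7,8,134$ singled out by extra low-dimensional coincidences ($\mathrm{SO}(6)\cong\mathrm{SU}(4)/\{\pm1\}$ and the octonionic almost complex structure on $\Ss^6$; $\mathrm{G}_2\subset\mathrm{Spin}(7)\subset\mathrm{SO}(8)$ together with parallelisability of $\Ss^7$; and $\dim E_7=133$), which let an otherwise-excluded subgroup survive.

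It then remains to convert each surviving $H$ into the asserted section. Each such $H$ fixes a canonical tensor in a functorial bundle $\mathfrak{o}(\mc{N}_\star)$: $\mathrm{SO}(k)\times\mathrm{SO}(m-k)$ fixes the rank-$k$ orthogonal projector in $\Sym^2\mc{N}_\star$, and for $k=1$ it is the vector stabiliser $\mathrm{SO}(m-1)$, hence fixes a unit vector in $\mc{N}_\star$; $\mathrm{U}(m/2)$ fixes a compatible complex structure in $\Lambda^2\mc{N}_\star$; $\mathrm{G}_2\subset\mathrm{SO}(7)$ fixes its defining $3$-form in $\Lambda^3\mc{N}_\star$; and $\mathrm{Ad}(E_7)$ fixes the Lie bracket of $\mathfrak{e}_7$, a non-zero element of $\Lambda^3\mc{N}_\star$. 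Feeding $\mathfrak{o}$ and this tensor into Proposition~\ref{proposition:invariant} produces a smooth flow-invariant $f\in C^\infty(S\widehat M,\mathfrak{o}(\mc{N}))\cap\ker\X$ whose algebraic type is preserved pointwise (by the Remark after Proposition~\ref{proposition:invariant}); since flow-invariant sections have constant norm, after rescaling $f$ is a genuine unit vector field / orthogonal projector of the stated rank / complex structure / $\mathrm{G}_2$-structure / Lie bracket, and it has finite Fourier degree by \cite[Theorem 4.1]{Guillarmou-Paternain-Salo-Uhlmann-16}. The parity is obtained by decomposing $f$ under the involution $\sigma\colon(x,v)\mapsto(x,-v)$, which is compatible with the frame-flow structure and multiplies the degree-$\ell$ component by $(-1)^\ell$: checking how the fixed tensor transforms under $\sigma$ on a fibre shows the component of the required parity is non-zero, and one replaces $f$ by it; for $E_7$ one additionally rules out invariant $3$-forms of degree $\le 2$, forcing degree $\ge 3$.

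The main obstacle is the classification in the second paragraph: it requires the complete list of maximal connected subgroups of $\mathrm{SO}(n-1)$ together with a delicate dimension-by-dimension obstruction-theoretic analysis of whether the tangent clutching function of $\Ss^{n-1}$ lifts through each, and it is precisely there that the exceptional behaviour in dimensions $7,8,134$ and the sharp rank bound $\min(\rho(n)-1,(n-2)/2)$ for $n\equiv0\bmod4$ emerge. The remaining steps are comparatively routine, relying on Proposition~\ref{proposition:invariant}, the finiteness theorem of \cite{Guillarmou-Paternain-Salo-Uhlmann-16}, and the elementary $\Z_2$-symmetry $v\mapsto-v$.
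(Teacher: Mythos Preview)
Your overall architecture matches the paper's: reduce to $H$ connected, classify the possible $H$'s via structure-group reductions of $T\Ss^{n-1}$, convert each into a flow-invariant tensor via Proposition~\ref{proposition:invariant}, then establish parity. Step~1 in the paper proceeds essentially as you outline, splitting into reducible vs.\ irreducible actions and invoking \cite{Leonard-71} and \cite{Cadek-Crabb-06} rather than a direct maximal-subgroup list, but the content is the same and you correctly identify this as the substantial part.

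The gap is in your parity argument. Your proposal to use the involution $\sigma\colon(x,v)\mapsto(x,-v)$ and ``check how the fixed tensor transforms under $\sigma$ on a fibre'' is the right intuition for the $\Lambda^p$ cases, but it only works because the stabilizer of the relevant $p$-form $\omega_0$ in $\Oo(n-1)$ lies in $\SO(n-1)$: this is what forces the orientation-reversing identification of $T_v\Ss^{n-1}$ with $T_{-v}\Ss^{n-1}$ to move $\omega_0$, and it also requires knowing that the $H$-representation on $\Lambda^p\R^{n-1}$ has a \emph{unique} trivial summand (so that $f_{\mathrm{even}}$ and $f_{\mathrm{odd}}$ are both multiples of $f$). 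Neither ingredient is mentioned. More seriously, the $\sigma$ argument \emph{fails} for the vector-field case: the stabilizer of a unit vector in $\Oo(n-1)$ is $\{1\}\times\Oo(n-2)$, which contains orientation-reversing elements, so an even nowhere-vanishing section of $\mc N$ is not excluded by your mechanism. The paper instead uses a topological degree argument (an even unit section of $\mc N$ restricted to one sphere is a map $\Ss^{n-1}\to\Ss^{n-1}$ homotopic to the identity yet with all regular values having an even number of preimages). Finally, your one-line dismissal of degree $\le 2$ for the $E_7$ Lie bracket hides a genuinely nontrivial lemma: if the invariant $3$-form had degree $1$ it would come from a $4$-form $\phi$ on $M$, and the paper shows via the Jacobi identity and a polarisation argument that $\phi/\sqrt{c}$ would then be an orthogonal involution of $\Lambda^2TM$ with vanishing trace on an odd-dimensional space, a contradiction. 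This is not an ``elementary $\Z_2$-symmetry'' consequence.
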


We remark that in the case $n = 7$, by a complex structure we rather understand a section $f$ of $\Lambda^2 \mc{N}$ determined by an orthogonal complex structure $J(x, v): \mc{N}(x, v) \to \mc{N}(x, v)$ via the formula $f(x, v)(\cdot, \cdot) = g_x(J(x, v) \cdot, \cdot)$.

\begin{proof}
The proof is divided in two steps. First of all, we show the existence of these invariant structures and then we show that their Fourier degree has to be odd or even. 
 We will make use of the contraction operator $\imath_v:\pi^*TM^{\otimes p}\to \pi^*TM^{\otimes (p-1)}$ by the tautological section, defined on decomposable elements $K_{(x,v)}=v_1\otimes\ldots\otimes v_p$ by $$\imath_v(K)=g_x(v,v_1)v_2\otimes\ldots\otimes v_p.$$ For example, if $f \in C^\infty(SM,\pi^* \Lambda^p TM)$, then $\imath_v f$ is the usual contraction of forms and if $f \in C^\infty(SM,\pi^*\Sym^2 TM)$ is viewed at each point $(x,v)$ as a symmetric endomorphism of $T_xM$, then, $\imath_v f$ at $(x,v)$ is just the vector $f(v)$. From the definition it is clear that $\mc{N} \subset \pi^*TM$ is equal to $\pi^*TM \cap \ker \imath_v$, and more generally 
$\Lambda^p\mc{N}=\pi^* \Lambda^p TM\cap \ker \imath_v$ and $\Sym^2\mc{N}=\pi^* \Sym^2 TM\cap \ker \imath_v$.
 In particular, all sections $f$ obtained from Theorem \ref{theorem:reduction} satisfy $\imath_v f = 0$.
\\

\emph{Step 1: existence of invariant structures.} Let $(M^n,g)$ be a closed Riemannian manifold with negative sectional curvature and further assume that the frame flow is not ergodic. By Proposition \ref{proposition:brin}, the transitivity group $H$ is a proper subgroup of $\mathrm{SO}(n-1)$. If $H$ is not connected, we can apply Lemma \ref{lemma:brin} to produce a finite cover $(\widehat{M},\widehat{g})$ whose transitivity group of the frame flow is the identity component $H_0$ of $H$. Thus, without loss of generality, we can directly assume that $H$ is connected. This implies in particular that $(H,\iota)$ is a reduction of the structure group of the frame bundle $F\Ss^{n-1} \to \Ss^{n-1}$ over the sphere (which is a principal $\mathrm{SO}(n-1)$-bundle), where $\iota : H \to \mathrm{SO}(n-1)$ is the embedding\footnote{Note that, here, we see $H$ as a subgroup of $\mathrm{SO}(n-1)$, that is, equivalently, we ask the representation $\iota : H \to \mathrm{SO}(n-1)$ to be faithful.}. 

We claim that the following holds\footnote{This is stated and organized in a slightly different way on the level of groups in \cite[Proposition 3.1]{Cekic-Lefeuvre-Moroianu-Semmelmann-22}.}. If $n \neq 7,8,134$, then

\begin{itemize}
\item If $n$ is odd, there is no non-trivial reduction of the structure group of $F\mathbb{S}^{n - 1} \to \mathbb{S}^{n - 1}$,
\item If $n \equiv 2$ mod $4$ or $n=4$, $H$ fixes a vector of $\R^{n-1}$,
\item If $n \equiv 0$ mod $4$, $H$ acts reducibly on $\R^{n-1} = V \oplus V^\bot$. Moreover, assuming without loss of generality that $\dim V \leq \dim V^\perp$, we have $\dim V \leq \rho(n)-1$.
\end{itemize}
The following exceptional cases hold:
\begin{itemize}
\item If $n=7$, then $H = \mathrm{SU}(3)$ or $\mathrm{U}(3)$ and $H$ fixes an almost-complex structure structure in $\R^6$,
\item If $n=8$, then either: $H = \mathrm{SO}(3)$ or $\mathrm{G}_2$ and $H$ fixes a non-zero $3$-form in $\R^7$ (with $\mathrm{G}_2$ stabilizer), \emph{or} $H$ acts reducibly on $\R^7 = V \oplus V^\bot$ with $\dim V \leq 3$; 
\item If $n= 134$, then either: $H = \mathrm{E}_7/\Z_2$ and $H$ fixes a non-zero $3$-form on $\R^{133}$ (a Lie bracket), \emph{or} $H$ fixes a non-zero vector in $\R^{133}$.
\end{itemize}

We believe that the $\mathrm{E}_7/\Z_2$-structure on $\Ss^{133}$ actually never occurs and could be ruled out by purely topological arguments but we are unable to prove this. 

The claim mainly follows from \cite{Adams-62,Leonard-71,Cadek-Crabb-06} and requires only a few additional arguments which are given below. Taking this claim for granted, the proof of Theorem \ref{theorem:reduction} is then a straightforward consequence of Proposition \ref{proposition:invariant} which produces flow-invariant objects from vectors fixed by the representation, and parity properties shown in Step 2. When $H$ acts reducibly, the object considered is the orthogonal projector onto one of the summands (the one with the lowest dimension). \\

We now prove the claim. For $n$ odd, this follows from \cite[Theorem 1]{Leonard-71} and we assume from now on that $n$ is even. We further assume that $n \geq 10$, the low-dimensional cases are dealt afterwards. We look at the following dichotomy: \\

\textbf{A.} The action of $H$ on $\R^{n-1}$ is reducible, that is, $\R^{n-1} = V \oplus V^\bot$, where each summand is $H$-invariant. Up to changing the roles of $V$ and $V^\bot$, we can assume that $\dim V \leq \dim V^\bot$. Note that since $H$ is connected, it acts by orientation preserving isometries on both $V$ and $V^\perp$. By the proof of \cite[Theorem 2.A]{Leonard-71}, there must exist at least $\dim V$ vector fields on $\Ss^{n-1}$, that is $\dim V \leq \rho(n)-1$. Since $\rho(4\ell+2)-1=1$, we see that $H$ fixes a non-zero vector in $\mathbb{R}^{n-1}$ if $n \equiv 2$ mod $4$.\\

\textbf{B.} The action of $H$ on $\R^{n-1}$ is irreducible. Then $H$ has to be simple\footnote{Recall that a group $G$ is said to be simple if it is connected, non-Abelian, and every closed connected normal subgroup is either the identity or the whole group.} by \cite[Theorem 3]{Leonard-71}. We say that $H$ has type $\mathfrak{h}$, where $\mathfrak{h}$ is the Lie algebra of ${H}$.\\

\textbf{B.1.} Since an irreducible representation of $\mathfrak{so}(2)$ has dimension at most 2, $\mathfrak{so}(4)$ is not simple, and $\mathfrak{sp}(1)=\mathfrak{so}(3)=\mathfrak{su}(2)$, $H$ has type $\mathfrak{so}(k)$ with $k \geq 5$ or $\mathfrak{sp}(k), \mathfrak{su}(k)$ with 
$k \geq 2$.\\ 

 \textbf{B.1.1.} Let us first deal with the last two cases $\mathfrak{sp}(k)$ or $\mathfrak{su}(k)$. This implies that $H$ is a quotient of $\mathrm{Sp}(k)$ or $\mathrm{SU}(k)$ by a subgroup of its center. By Lemma \ref{lemma:lift}, we thus know that the reduction of the structure group $\iota : H \to \mathrm{SO}(n-1)$ lifts to a reduction $\widetilde{\iota} : \mathrm{Sp}(k), \mathrm{SU}(k) \to \mathrm{SO}(n-1)$. If $n \geq 10$, the existence of such an irreducible reduction $\widetilde{\iota}$ is impossible by \cite[Theorem 2.1 (B), (C)]{Cadek-Crabb-06} so $H$ cannot be of type $\mathfrak{sp}(k)$ or $\mathfrak{su}(k)$. \\

\textbf{B.1.2.}  We now deal with the case where $H$ has type $\mathfrak{so}(k)$. There are a few possibilities for $H$, namely it can be $\mathrm{Spin}(k),\ \mathrm{SO}(k)$, or additionally $\mathrm{PSO}(k)$ when $k$ is even, or the semi-spin group $\mathrm{SemiSpin}(k)$ if $k \equiv 0 \mod 4$.\\

\textbf{B.1.2.1.} If $H=\mathrm{PSO}(k)$, then there is also a reduction to $\mathrm{SO}(k)$ by Lemma \ref{lemma:lift}, in which case \cite[Theorem 2.1 (A)]{Cadek-Crabb-06} shows that $\mathrm{SO}(k) \hookrightarrow \mathrm{SO}(n-1)$ is the standard diagonal embedding so it cannot be irreducible. \\

\textbf{B.1.2.2.} If $H= \mathrm{SO}(k)$, the conclusion follows from the previous point. \\

\textbf{B.1.2.3.} We now deal with $\mathrm{Spin}(k)$ and $\mathrm{SemiSpin}(k)$ (when $k \equiv 0$ mod $4$). In the former case, it suffices to show that a group morphism $\iota : \mathrm{Spin}(k) \to \mathrm{SO}(n-1)$ cannot be faithful. Observe that $\iota(-1) = \pm 1$ since $-1$ is in the center of $\mathrm{Spin}(k)$ and $\iota(-1)^2=1$ (we use that $H$ is irreducible). If $\iota(-1)=1$, then we obtain that $\iota$ is not faithful and thus the representation factorizes through $\mathrm{SO}(k)$. Hence $\iota(-1)=-1$ but since $\e_1 \cdot \e_2 \in \mathrm{Spin}(k)$ squares to $-1$ (where $(\mathbf{e}_1, \ldots, \mathbf{e}_k)$ is an orthonormal basis of $\R^k$), we get that $\iota(\e_1\cdot\e_2)$ is a complex structure on $\R^{n-1}$ and this is a contradiction since $n-1$ is odd. The same argument works for $\mathrm{SemiSpin}(k)$. \\

\textbf{B.2.} $H$ has exceptional type, that is ${\mathfrak{h}}$ is isomorphic to one of the five exceptional simple Lie algebras and $H$ is isomorphic to (a finite quotient of) one of the exceptional Lie groups. So it suffices to look at real irreducible representations of exceptional Lie groups on odd dimensional vector spaces. Moreover, by \cite[Proposition 3.1]{Cadek-Crabb-06}, writing $n-1=\dim H + k + 1$ for some integer $k$, there must exist at least $k$ vector fields on the sphere $\Ss^{n-1}$, so the Radon-Hurwitz number satisfies 
\begin{equation}\label{rh1}\rho(n)\ge n-1-\dim H.\end{equation} 
For $n\ge 10$ we have $\rho(n)\leq\frac n2$, unless $n=16$. Since no exceptional Lie group has an irreducible representation in dimension $15$, it follows that $n-1$ has to be the dimension of an irreducible representation of an exceptional Lie group $H$, with 
\begin{equation}\label{rh}9\leq n-1\leq 2\dim H+1.\end{equation} 
The following possibilities may occur: \\

\textbf{B.2.1.} ${\mathfrak{h}}=\mathfrak{g}_2$, $\dim H=14$. The only odd-dimensional irreducible representation of $\mathfrak{g}_2$ satisfying \eqref{rh} has dimension $n-1=27$. However, $\rho(28)=4$, so it does not satisfy \eqref{rh1}.\\

\textbf{B.2.2.} ${\mathfrak{h}}=\mathfrak{f}_4$, $\dim H=52$. There is no odd-dimensional irreducible representation of $\mathfrak{f}_4$ satisfying \eqref{rh}.\\

\textbf{B.2.3.} ${\mathfrak{h}}=\mathfrak{e}_6$, $\dim H=78$. Again, there is no odd-dimensional irreducible representation of $\mathfrak{e}_6$ satisfying \eqref{rh}.\\

\textbf{B.2.4.} ${\mathfrak{h}}=\mathfrak{e}_7$, $\dim H=133$. The only odd-dimensional irreducible representation of $\mathfrak{e}_7$ satisfying \eqref{rh} is its adjoint representation. Note that the adjoint representation of $\mathrm{E}_7$ on $\mathbb{R}^{133}$ is not faithful since $\mathrm{E}_7$ has non-trivial center, so ${H}=\mathrm{E}_7/\Z_2$. In this case, $H$ fixes the Lie bracket on $\R^{133}=\mathfrak{e}_7$ which we can view as a vector in $\Lambda^3 \R^{133}$. \\

\textbf{B.2.5.} ${\mathfrak{h}}=\mathfrak{e}_8$, $\dim H=248$. There is no odd-dimensional irreducible representation of $\mathfrak{e}_8$ satisfying \eqref{rh}.\\

All relevant information about the low-dimensional irreducible representations of exceptional simple Lie algebras can be found in \cite{McKay-Patera-Rand-90}.

\medskip

\textbf{C.} Finally, it remains to deal with the low-dimensional cases $n=4, 6, 7, 8$. For $n=4$, the only strict connected subgroup of $\mathrm{SO}(3)$ is $\mathrm{SO}(2)$ and this fixes a vector in $\R^3$. For $n = 6$, $7$, or $8$, if the subgroup $H$ acts reducibly on $\R^{n-1}$, then it falls into case \textbf{A.} Hence we may assume $H$ acts irreducibly. It is straightforward to check that for $n=6,7,8$, the only groups acting irreducibly on $\mathbb{R}^{n-1}$ are $H = \mathrm{SO}(3)$ for $n = 6$, $H=\mathrm{SU}(3)$ or $\mathrm{U}(3)$ for $n=7$ and $H = \mathrm{SO}(3)$ or $\mathrm{G}_2$ for $n = 8$. (The embeddings of $\mathrm{SO}(3)$ in $\mathrm{SO}(5)$ and $\mathrm{SO}(7)$ are obtained via the irreducible representations of $\mathrm{SO}(3)$ in dimensions 5 and 7 respectively). The case $n=6$ and $H = \mathrm{SO}(3)$ is ruled out by \cite{Agricola-Becker-Bender-Friedrich-11}. If $n=7$, both $\mathrm{SU}(3)$ and $\mathrm{U}(3)$ fix the standard complex structure on $\R^6$, while for $n=8$, note that the irreducible $\mathrm{SO}(3)$ representation of dimension $7$ is the restriction of the irreducible 7-dimensional $\mathrm{G}_2$ representation, and so the group fixes a invariant vector in $\Lambda^3 \R^7$, the $\mathrm{G}_2$-structure.
\\

\emph{Step 2: topological reduction of the Fourier degree.} We now show that the degree of flow-invariant objects must be odd or even in certain cases, using topological arguments.

We start with the case of flow-invariant objects with values in $\Sym^2 \mc{N}$ and the following observation: if $f \in C^\infty(SM, \Sym^2 \mc{N})$ or $f \in C^\infty(SM,\Lambda^p \mc{N})$ is such that $\X f = 0$, then writing $f = f_{\mathrm{odd}} + f_{\mathrm{even}}$ where each term has respectively odd or even Fourier degree, we have $0 = \imath_v f = \imath_v f_{\mathrm{odd}} = \imath_v f_{\mathrm{even}}$ and $0 = \X f = \X f_{\mathrm{odd}} = \X f_{\mathrm{even}}$. (The proof is immediate as both operators $\X$ and $\imath_v$ shift the Fourier degrees by $\pm 1$.)

\begin{lemma}
Assume that $n$ is even and there exists a section $f \in C^\infty(SM,\Sym^2\mc{N})$ not proportional to the tautological section $\mathbbm{1}$, such that $\X f = 0$. Then, there exists a flow-invariant orthogonal projector $f' \in C^\infty(SM,\Sym^2\mc{N})$ of even degree such that $1\leq\mathrm{rank}(f') \leq \min\big(\rho(n)-1,(n-2)/2\big)$.
\end{lemma}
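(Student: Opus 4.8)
The plan is to use flow-invariance of $f$ together with ergodicity to spectrally decompose $f$, then to correct the construction so that the resulting projector has even Fourier degree, and finally to bound its rank by a topological argument on the fibre sphere. First, since $\X f=0$ and $\X=\nabla_X$ obeys the Leibniz rule with respect to composition of endomorphisms, each operator power $f^{j}$ satisfies $\X(f^{j})=0$, so every scalar function $\Tr(f^{j})$ is flow-invariant, hence constant by ergodicity of the geodesic flow. As $f(x,v)$ is a symmetric endomorphism of the $(n-1)$-dimensional space $\mc N(x,v)$, the quantities $\Tr(f^{j})$ for $0\le j\le n-1$ determine its characteristic polynomial, which is therefore independent of $(x,v)$. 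Thus $f$ has a fixed finite set of distinct eigenvalues $\mu_{1}<\dots<\mu_{m}$ with constant multiplicities, and the spectral projector onto the $\mu_{i}$-eigenbundle is the Lagrange polynomial $P_{i}:=\prod_{j\ne i}\frac{f-\mu_{j}\mathbbm{1}}{\mu_{i}-\mu_{j}}\in C^{\infty}(SM,\Sym^{2}\mc N)$; it is a flow-invariant orthogonal projector of constant rank, and $\sum_{i}P_{i}=\mathbbm{1}$. Since $f$ is not proportional to $\mathbbm{1}$ we have $m\ge 2$, so some $P_{i}$ has $1\le\mathrm{rank}(P_{i})\le n-2$.

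Next I would arrange even Fourier degree. Write $f=f_{\mathrm{even}}+f_{\mathrm{odd}}$; because $\X$ and $\imath_{v}$ both shift Fourier degree by $\pm 1$ and kill $f$, each summand again lies in $C^{\infty}(SM,\Sym^{2}\mc N)$ and is flow-invariant. The section $\mathbbm{1}$ is invariant under the antipodal map $v\mapsto -v$, hence even. If $f_{\mathrm{even}}$ is not proportional to $\mathbbm{1}$, set $g:=f_{\mathrm{even}}$. Otherwise $f_{\mathrm{even}}=\phi\,\mathbbm{1}$ for some $\phi\in C^{\infty}(SM)$; applying $\X$ and using $\X\mathbbm{1}=0$ gives $X\phi=0$, so $\phi$ is constant and $f_{\mathrm{odd}}=f-\phi\,\mathbbm{1}\neq 0$; in this case set $g:=f_{\mathrm{odd}}^{2}$, which is even (invariant under $v\mapsto -v$) and flow-invariant. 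One checks $g$ is not proportional to $\mathbbm{1}$: if $f_{\mathrm{odd}}^{2}=c\,\mathbbm{1}$, then $c=0$ would force $f_{\mathrm{odd}}=0$, while $c>0$ would make each $f_{\mathrm{odd}}(x,v)$ have eigenvalues $\pm\sqrt c$ only, with equal multiplicities because $\Tr f_{\mathrm{odd}}$ is a flow-invariant, hence constant, function of odd Fourier degree and therefore zero; but then $\dim\mc N(x,v)=n-1$ would be even, contradicting that $n$ is even. In all cases $g$ is an even, flow-invariant symmetric endomorphism of $\mc N$ not proportional to $\mathbbm{1}$; running the spectral argument of the previous paragraph with $g$ (polynomials in $g$ with constant coefficients remain even) produces an even flow-invariant orthogonal projector $P$ with $1\le\mathrm{rank}(P)\le n-2$. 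Replacing $P$ by $\mathbbm{1}-P$ if necessary (still even) and using that $n-1$ is odd, I may assume $\mathrm{rank}(P)\le (n-2)/2$.

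Finally, for the topological bound, restrict $P$ to one fibre $S_{x}M\cong\Ss^{n-1}$; under the canonical identification $\mc N|_{S_{x}M}\cong T\Ss^{n-1}$ the image of $P|_{S_{x}M}$ is a subbundle $V\subset T\Ss^{n-1}$ of rank $r:=\mathrm{rank}(P)\le (n-1)/2$, with orthogonal complement $V^{\perp}$, so $T\Ss^{n-1}=V\oplus V^{\perp}$. Since $n-1\ge 3$ we have $H^{1}(\Ss^{n-1};\Z/2)=0$, so $V$ and $V^{\perp}$ are orientable, and arguing exactly as in the reducible case \textbf{A} of Step 1 above (through \cite{Leonard-71}) gives $r\le\rho(n)-1$. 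Hence $f':=P$ is the desired even flow-invariant orthogonal projector with $1\le\mathrm{rank}(f')\le\min\!\big(\rho(n)-1,(n-2)/2\big)$. The main obstacle is the parity step: the naive candidate $f_{\mathrm{even}}$ may be a multiple of $\mathbbm{1}$, and then one is forced through the $f_{\mathrm{odd}}^{2}$ construction, whose non-degeneracy relies precisely on $\dim\mc N=n-1$ being odd — the one place the hypothesis $n$ even is used.
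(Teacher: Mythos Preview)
Your proof is correct and follows essentially the same strategy as the paper: constant eigenvalues via ergodicity, spectral projectors as polynomials in the section, a parity reduction, and the Radon--Hurwitz bound via Leonard on a fibre sphere. The one genuine variation is your treatment of the odd part: the paper observes directly that the eigenvalues of $f_{\mathrm{odd}}$ come in pairs $\pm\lambda$ with $\Pi_{\lambda}(-v)=\Pi_{-\lambda}(v)$, so that $\Pi_{\lambda}+\Pi_{-\lambda}$ is an even projector, whereas you replace $f_{\mathrm{odd}}$ by $f_{\mathrm{odd}}^{2}$ and rule out $f_{\mathrm{odd}}^{2}=c\,\mathbbm{1}$ by a trace/parity argument exploiting that $\dim\mc N=n-1$ is odd. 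These are equivalent (the spectral projectors of $f_{\mathrm{odd}}^{2}$ are exactly the $\Pi_{\lambda}+\Pi_{-\lambda}$ and $\Pi_{0}$), but your squaring trick makes the evenness of the resulting projectors automatic and isolates cleanly the single place where $n$ even is used.
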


\begin{proof}
We write $f = f_{\mathrm{even}} + f_{\mathrm{odd}}$, where both terms are flow-invariant and take values in $\Sym^2 \mc{N}$. Each of them can be diagonalized and has constant (real) eigenvalues. We briefly sketch this (for details, see \cite[Lemma 5.6]{Cekic-Lefeuvre-20}): consider a dense orbit $\mc{O}$ of the geodesic flow, and an orthonormal frame for $\mc{N}|_{\mc{O}}$ obtained by parallel transport along the flow of an orthonormal frame of $\mc{N}(z)$ for some $z \in \mc{O}$. Using the flow-invariance, it follows that in this frame $f_{\mathrm{odd}/\mathrm{even}}$ are constant, and so have constant eigenvalues; by density of $\mc{O}$ we reach the same conclusion on the whole of $SM$. Further following \cite[Lemma 5.6]{Cekic-Lefeuvre-20}, one can show that $f_{\mathrm{even}} = \sum_i \lambda_i \Pi_{\lambda_i}$, where $\lambda_i$ are the constant eigenvalues and $\Pi_{\lambda_i}$ are the orthogonal projectors onto the corresponding eigenspaces. It can easily be checked that these projectors are all even. For the odd part of $f$, it can be shown that if $\lambda$ is an eigenvalue of $f_{\mathrm{odd}}$, then so is $-\lambda$ and $\Pi_\lambda(-v) = \Pi_{-\lambda}(v)$. Hence $f_{\mathrm{odd}} = \sum_i \lambda_i \Pi_{\lambda_i} - \lambda_i \Pi_{-\lambda_i}$ and $\Pi_{\lambda_i} + \Pi_{-\lambda_i}$ is an even orthogonal projection for each $i$.

In any case, we obtain a non-zero orthogonal projector $f'$ with even Fourier degree which is different from $0$ and  $\mathbbm{1}$. Let $F$ be the image of $f'$. Up to changing $f'$ by $\mathbbm{1}-f'$ (and $F$ by $F^\bot$), we can assume that $1\leq r:= \mathrm{rank}(f') \leq (n-2)/2$. Observe that $F$ is in particular a subbundle of the tangent bundle to the sphere and this also forces $r \leq \rho(n)-1$, see \cite{Leonard-71}.
\end{proof}

We now turn to the case where the flow-invariant section takes values in $\Lambda^p \mc{N}$. When $p=1$ we have the following:

\begin{lemma}
\label{lemma:topological-degree}
If the dimension $n$ of $M$ is even, then every section $f \in C^\infty(SM,\mc{N})$ which satisfies $\X f = 0$ has odd degree.
\end{lemma}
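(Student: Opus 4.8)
The plan is to combine the flow-invariance of $f$ with the decomposition $f=f_{\mathrm{odd}}+f_{\mathrm{even}}$, where each summand is separately flow-invariant and satisfies $\imath_v f_{\mathrm{odd}}=\imath_v f_{\mathrm{even}}=0$ (as recorded before the statement, since $\X$ and $\imath_v$ each shift Fourier degree by $\pm 1$). It therefore suffices to show that any flow-invariant even section $g\in C^\infty(SM,\mc{N})$ with $\X g=0$ must vanish. First I would note that such a $g$ has constant norm on $SM$ by ergodicity of the geodesic flow ($X|g|^2=2\langle \X g,g\rangle=0$), so if $g\not\equiv 0$ then $g(x,v)\neq 0$ for every $(x,v)\in SM$; after rescaling we may assume $|g|\equiv 1$. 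Since $g$ takes values in $\mc{N}$, i.e. $g(x,v)\perp v$ for all $(x,v)$, and $g$ is even, i.e. $g(x,-v)=g(x,v)$, the section $g$ descends to a nowhere-vanishing section of the pullback of $TM$ over the projectivized tangent bundle, orthogonal to the tautological line.

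The topological obstruction then comes from restricting to a single fiber. Fix $x\in M$; the restriction $g|_{S_xM}$ is a nowhere-vanishing continuous map $v\mapsto g(x,v)\in T_xM$ with $g(x,v)\perp v$ and $g(x,-v)=g(x,v)$, i.e. $g|_{S_xM}$ is a section of the tangent bundle of $\mathbb{RP}^{n-1}$ (identifying $\mc{N}$ restricted to $S_xM$, quotiented by the antipodal map, with $T\mathbb{RP}^{n-1}$ via $d\pi$ or $\mc K$). The hard part — really the crux — is to invoke the correct non-existence statement: when $n$ is even, $\mathbb{RP}^{n-1}$ is odd-dimensional but one cannot directly appeal to the hairy ball theorem; instead one uses that a nowhere-zero vector field on $S_xM=\Ss^{n-1}$ that is even (antipodally invariant) and everywhere orthogonal to $v$ would, together with $v$ itself, give a splitting $\R^n = \R v \oplus \R g(x,v) \oplus (\text{rest})$ equivariant under the antipodal map, and in particular produce a nowhere-zero tangent vector field on $\Ss^{n-1}$ (namely $v\mapsto g(x,v)$ itself, which is tangent to $\Ss^{n-1}$ at $v$ since $g(x,v)\perp v$). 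Such a field exists only if $n-1$ is odd, i.e. $n$ even — so this alone does not give a contradiction. The genuine obstruction is the \emph{evenness}: a continuous even nowhere-zero vector field on $\Ss^{n-1}$ descends to a nowhere-zero vector field on $\mathbb{RP}^{n-1}$, and $\chi(\mathbb{RP}^{n-1}) = 1 \neq 0$ when $n-1$ is even — but here $n-1$ is odd. So the correct route is different: I would instead use that $g$ being even and $\X g=0$ forces $g$ to have finite Fourier content supported in even degrees, and then pull back to a flow-invariant trace-free twisted tensor on $M$ of even degree in $\Sym^{2k}TM$ via the isomorphism $\pi^*_{2k}$; but that reintroduces the analytic machinery one is trying to avoid.

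I expect the author's actual argument to be cleaner: pass to the universal cover or use the fact that an even invariant $f\in C^\infty(SM,\mc{N})$, combined with the tautological section $v$, yields two everywhere-orthonormal sections of $\pi^*TM$ over $SM$; restricting to a fiber $S_xM\cong\Ss^{n-1}$ gives a continuous map $\Ss^{n-1}\to V_2(\R^n)$ (the Stiefel manifold of $2$-frames) sending $v\mapsto (v,f(x,v))$, which is equivariant with respect to the antipodal action on $\Ss^{n-1}$ and the action $(v,w)\mapsto(-v,w)$ on $V_2(\R^n)$ — and the evenness of $f$ means it is genuinely the antipodal action on the first factor only. The existence of such an equivariant map is obstructed in the relevant parity, giving the contradiction, and hence $f=f_{\mathrm{odd}}$ has odd degree. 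The main obstacle is pinning down exactly which classical result (hairy-ball-type, or a mod-$2$ degree / $\Z/2$-equivariant cohomology argument, or a direct appeal to \cite{Adams-62,Leonard-71}) delivers the non-existence; once that is identified the rest is bookkeeping about how the even/odd splitting interacts with $\imath_v$ and $\X$.
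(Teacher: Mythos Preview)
Your setup is entirely correct: split $f=f_{\mathrm{odd}}+f_{\mathrm{even}}$, note each piece is separately flow-invariant with values in $\mc{N}$, normalize $g:=f_{\mathrm{even}}$ to unit length, and restrict to a single fiber $S_{x_0}M\cong\Ss^{n-1}$ to obtain a nowhere-vanishing tangent vector field $Z(v):=g(x_0,v)$ on $\Ss^{n-1}$ with $Z(-v)=Z(v)$. But from here you circle around the obstruction without landing on it, and the routes you try (hairy ball, $\chi(\mathbb{RP}^{n-1})$, equivariant maps to Stiefel manifolds) either fail in this parity or are overkill. As you yourself say, the crux is missing.

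The paper's argument is a two-line degree computation that you should add. View the unit section $Z$ as a smooth map $Z:\Ss^{n-1}\to\Ss^{n-1}$. Since $Z(v)\perp v$ for every $v$, the formula
\[
Z_t(v):=\cos\!\big(\tfrac{\pi t}{2}\big)\,v+\sin\!\big(\tfrac{\pi t}{2}\big)\,Z(v)
\]
is a homotopy from the identity to $Z$ through maps to $\Ss^{n-1}$, so $\deg Z=1$. On the other hand, evenness $Z(v)=Z(-v)$ means every regular value has its preimages coming in antipodal pairs, so $\deg Z$ is even. This contradiction forces $f_{\mathrm{even}}=0$. Note that the dimension hypothesis ``$n$ even'' is not used explicitly in this step; it enters only because the lemma is applied in the paper in even dimensions, and the argument shows more generally that no even, unit, flow-invariant section of $\mc{N}$ can exist (for any $n\ge 3$).
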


\begin{proof}
Write $f = f_{\mathrm{even}} + f_{\mathrm{odd}}$, the decomposition of $f$ into even and odd Fourier degrees. Both terms have constant norm on $SM$ and satisfy $\imath_v f_{\mathrm{even}} = \imath_v f_{\mathrm{odd}}=0$. Assume for a contradiction that $f_{\mathrm{even}} \neq 0$. Up to rescaling, we can assume its norm is constant equal to $1$. Fixing an arbitrary point $x_0 \in M$ and identifying $S_{x_0}M \simeq \Ss^{n-1}$, the section $f_{\mathrm{even}}$ defines a non-vanishing vector field $Z(v) := f(x_0,v)$ such that $Z \in C^\infty(\Ss^{n-1}, T\Ss^{n-1})$ satisfying $Z(v) = Z(-v)$ by evenness. We can see $Z$ as a smooth map $Z : \Ss^{n-1} \to \Ss^{n-1}$. Now, $Z$ is clearly homotopic to the identity by
\[
Z_t(v) := \cos\left(\tfrac{\pi t}{2}\right) v + \sin\left(\tfrac{\pi t}{2}\right) Z(v),
\]
hence its topological degree is $1$. On the other hand, any regular value of $Z$ has an even number of preimages since $Z(v) = Z(-v)$, so the topological degree of $Z$ is even. This is a contradiction.
\end{proof}

Consider now the case $p\ge 2$. We will obtain a general result about the Fourier degree of $p$-forms on the sphere $\Ss^{n-1}$ which are defined by reductions of the structure group. Assume that the $\SO(n-1)$-principal bundle $\SO(n)\to \Ss^{n-1}$ ($=\SO(n)/\SO(n-1)$) of oriented orthonormal frames on the sphere $\Ss^{n-1}$ has a reduction to $H\subset \SO(n-1)$ and that the following properties hold:
\begin{enumerate}
\item \label{enum:1} for some $p\ge 2$, the restriction to $H$ of the standard $\SO(n-1)$-representation on $\Lambda^p\R^{n-1}$ has an invariant vector $\omega_0$;
\item \label{enum:2} the stabilizer of $\omega_0$ in $\Oo(n-1)$ is contained in $\SO(n-1)$.
\end{enumerate}

Note that the property \eqref{enum:2} implies that $\omega_0$ is non-zero. Equivalently to Definition \ref{definition:structure}, the assumption that the structure group of $\Ss^{n-1}$ reduces to $H$ means that there exists an $H$-principal subbundle $\SO(n)\supset P\to \Ss^{n-1}$. The associated vector bundle $E:=P\times_H\Lambda^p\R^{n-1}$ is canonically isomorphic to the exterior bundle $\Lambda^pT\Ss^{n-1}$ and for every $v\in \Ss^{n-1}$ and $u$ in the fibre $P_v$ of $P$ over $v$, one can view $u$ as an isomorphism$u:\Lambda^p\R^{n-1}\to \Lambda^p_vT\Ss^{n-1}$.

Using property \eqref{enum:1}, one can define a $p$-form $\omega\in\Omega^p(\Ss^{n-1})$ by 
$$\omega_v:=u(\omega_0),\qquad\forall v\in \Ss^{n-1},\ \forall u\in P_v$$ 
(the definition is clearly independent on the choice of $u$ in the fiber of $P$ over $v$). Our aim is to prove the following:

\begin{lemma} \label{l3.9}The $p$-form $\omega$ on $\Ss^{n-1}$ cannot have even Fourier degree.
\end{lemma}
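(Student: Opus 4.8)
The plan is to argue by contradiction: assuming that $\omega$ has even Fourier degree, I will produce an element of $\Oo(n-1)\setminus\SO(n-1)$ fixing $\omega_0$, which contradicts property \eqref{enum:2}.

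The first step is to reinterpret ``even Fourier degree'' as invariance under the antipodal map of $\Ss^{n-1}$. Since $\Lambda^pT\Ss^{n-1}$ sits inside the trivial bundle $\Ss^{n-1}\times\Lambda^p\R^n$, with fibre $\Lambda^p_vT\Ss^{n-1}=\Lambda^p(v^\perp)$ over $v$, the form $\omega$ is the same as a map $v\mapsto\omega_v\in\Lambda^p\R^n$, and its decomposition into spherical harmonics $\omega=\sum_{k\ge 0}\hat\omega_k$ has each $\hat\omega_k$ a finite sum of products of a degree-$k$ spherical harmonic on $\Ss^{n-1}$ with a constant element of $\Lambda^p\R^n$. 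Degree-$k$ spherical harmonics are restrictions of degree-$k$ homogeneous polynomials, hence are multiplied by $(-1)^k$ under $v\mapsto -v$; therefore $\omega$ has even Fourier degree if and only if $\omega_{-v}=\omega_v$ (as elements of $\Lambda^p\R^n$) for every $v\in\Ss^{n-1}$.

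The second step compares frames over $v$ and $-v$. Fix $v\in\Ss^{n-1}$ and choose $u\in P_v$, $u'\in P_{-v}$ (both fibres are non-empty, $P\to\Ss^{n-1}$ being a principal bundle). View $u,u'\in\SO(n)$ as positively oriented orthonormal bases of $\R^n$ whose last vector is $v$, respectively $-v$; their first $n-1$ vectors furnish isometries, still denoted $u,u':\R^{n-1}\to v^\perp=(-v)^\perp$, and I write $u'=u\circ g$ for a unique $g\in\Oo(n-1)$. Comparing the two orthonormal bases of $\R^n$ shows that, as elements of $\SO(n)$, one has $u'=u\cdot\mathrm{diag}(g,-1)$; taking determinants yields $1=\det(u')=\det(u)\det(g)(-1)=-\det(g)$, so $g\in\Oo(n-1)\setminus\SO(n-1)$.

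Finally, since $\omega_0$ is $H$-invariant (property \eqref{enum:1}) and $u,u'$ are $P$-frames, the definition of $\omega$ gives $\omega_{-v}=u'(\omega_0)=u(g\cdot\omega_0)$. If $\omega$ had even Fourier degree, then $\omega_{-v}=\omega_v=u(\omega_0)$ by the first step, and injectivity of $u$ forces $g\cdot\omega_0=\omega_0$; thus $g$ lies in the stabiliser of $\omega_0$ in $\Oo(n-1)$ but not in $\SO(n-1)$, contradicting property \eqref{enum:2} and finishing the argument. I expect the only delicate points to be the orientation bookkeeping in the determinant identity and making precise the identification of even Fourier degree with antipodal invariance; there is no substantial obstacle beyond this.
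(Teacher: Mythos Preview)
Your proof is correct and follows essentially the same approach as the paper's: both argue by contradiction, identify even Fourier degree with antipodal invariance $\omega_v=\omega_{-v}$, and then observe that any two frames $u\in P_v$, $u'\in P_{-v}$ differ by an orientation-reversing element of $\Oo(n-1)$ which must therefore fix $\omega_0$, contradicting property~\eqref{enum:2}. You spell out the determinant bookkeeping and the spherical-harmonic parity more explicitly than the paper does (the paper simply remarks that $T_v\Ss^{n-1}$ and $T_{-v}\Ss^{n-1}$ carry opposite orientations), but the argument is the same.
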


\begin{proof} Assume for a contradiction that $\omega$ has even Fourier degree. In particular, for every $v\in \Ss^{n-1}$, if we identify the tangent spaces at $v$ and $-v$ to the sphere, and correspondingly their exterior powers, we have 
\be\label{o}\omega_v=\omega_{-v}.\ee 

Take some arbitrary frames $u\in P_v$ and $u'\in P_{-v}$. Note that $u:\R^{n-1}\to T_v\Ss^{n-1}$ and $u':\R^{n-1}\to T_{-v}\Ss^{n-1}$ are direct isometries, but  $T_v\Ss^{n-1}$ and $T_{-v}\Ss^{n-1}$ have opposite orientations. This means that $a:=u^{-1}\circ u'$ is an isometry of $\R^{n-1}$ reversing the orientation.  By \eqref{o}, we get
$$a(\omega_0)=(u^{-1}\circ u')(\omega_0)=u^{-1}(\omega_{-v})=u^{-1}(\omega_{v})=\omega_0,$$
thus contradicting property (2).
\end{proof}

We can now treat the remaining exceptional cases. 

\begin{lemma}\label{3.13}
Assume that the transitivity group acts irreducibly on $\R^{n-1}$, and either
\begin{itemize}
\item  $n=7$ and $p = 2$;
\item or $n=8$ and $p = 3$; 
\item or $n= 134$ and $p = 3$.
\end{itemize}
Then every non-zero flow-invariant section $f \in C^\infty(SM,\Lambda^p \mc{N})$ has odd Fourier degree.
\end{lemma}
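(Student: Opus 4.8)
The plan is to reduce everything to Lemma~\ref{l3.9}, by showing that in each of the three cases the $H$-invariant $p$-form that arises has the property that its stabiliser inside $\mathrm{O}(n-1)$ is contained in $\mathrm{SO}(n-1)$ --- this is exactly hypothesis~\eqref{enum:2} of the setup preceding Lemma~\ref{l3.9}, hypothesis~\eqref{enum:1} being automatic. A flow-invariant section of even Fourier degree would then restrict, on a spherical fibre, to a $p$-form on $\Ss^{n-1}$ of even Fourier degree, contradicting Lemma~\ref{l3.9}.

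First I would set up the contradiction. Let $f\in C^\infty(SM,\Lambda^p\mc{N})$ be flow-invariant ($\X f=0$) and non-zero, and suppose its even part $f_{\mathrm{even}}$ is non-zero. As noted at the start of Step~2, $f_{\mathrm{even}}$ is again flow-invariant with values in $\Lambda^p\mc{N}$, and being flow-invariant it has constant norm, hence is nowhere vanishing. By Proposition~\ref{proposition:invariant}, evaluation at $(x_\star,v_\star)$ identifies $f_{\mathrm{even}}$ with a non-zero vector $\omega_0\in\Lambda^p\mc{N}_\star$ fixed by the transitivity group $H\subset\mathrm{SO}(\mc{N}_\star)\simeq\mathrm{SO}(n-1)$. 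Since $H$ acts irreducibly, Step~1 (cases \textbf{C} and \textbf{B.2.4}) identifies $H$: it is $\mathrm{SU}(3)$ or $\mathrm{U}(3)$ for $n=7$, $\mathrm{SO}(3)$ or $\mathrm{G}_2$ for $n=8$, and $\mathrm{E}_7/\Z_2$ for $n=134$; in particular $H$ is connected. A standard representation-theoretic computation shows that in each case the space of $H$-invariant $p$-forms on $\R^{n-1}$ is one-dimensional, spanned respectively by the standard Kähler $2$-form, the defining $\mathrm{G}_2$-form, and the Cartan $3$-form $(x,y,z)\mapsto\langle[x,y],z\rangle$ of $\mathfrak{e}_7=\R^{133}$. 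Hence $\omega_0$ is a non-zero multiple of this distinguished form, and property~\eqref{enum:1} holds.

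Next I would verify property~\eqref{enum:2}. For $n=7$, an element $a\in\mathrm{O}(6)$ fixing $\omega_0$ commutes with the associated complex structure, hence is $\C$-linear and orthogonal, hence unitary, so the stabiliser is $\mathrm{U}(3)\subset\mathrm{SO}(6)$. For $n=8$, the stabiliser in $\mathrm{GL}(7,\R)$ of a generic (``positive'') $3$-form is the connected group $\mathrm{G}_2$, which lies in $\mathrm{SO}(7)$ for the metric and orientation the form itself induces; hence the $\mathrm{O}(7)$-stabiliser of $\omega_0$ equals $\mathrm{G}_2\subset\mathrm{SO}(7)$. For $n=134$, non-degeneracy of the Killing form shows that an orthogonal $a$ with $a^\ast\omega_0=\omega_0$ satisfies $[ax,ay]=a[x,y]$ for all $x,y$, so it is a Lie-algebra automorphism of $\mathfrak{e}_7$; conversely every automorphism preserves the Killing form and $\omega_0$, so the stabiliser is $\mathrm{Aut}(\mathfrak{e}_7)$. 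Since $\mathrm{E}_7$ has no non-trivial Dynkin-diagram symmetry, $\mathrm{Aut}(\mathfrak{e}_7)=\mathrm{E}_7/\Z_2$ is connected, hence contained in $\mathrm{SO}(133)$.

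Finally I would conclude as in the proof of Lemma~\ref{l3.9}. By Proposition~\ref{proposition:brin}, the reduction of the structure group to $H$ provides a flow-invariant $H$-subbundle $Q\subset FM$; restricting $Q$ to a single fibre $S_{x_0}M\simeq\Ss^{n-1}$ yields a reduction to $H$ of the oriented orthonormal frame bundle of $\Ss^{n-1}$, and, exactly as in the Remark following Proposition~\ref{proposition:invariant}, the restriction of $f_{\mathrm{even}}$ to $S_{x_0}M$ is precisely the $p$-form on $\Ss^{n-1}$ built from $\omega_0$ via this reduction. Since $f_{\mathrm{even}}$ is even, this $p$-form has even Fourier degree, contradicting Lemma~\ref{l3.9}; thus $f_{\mathrm{even}}=0$ and $f$ has odd Fourier degree. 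I expect the only slightly delicate point to be the $\mathfrak{e}_7$ case of property~\eqref{enum:2} --- that no orientation-reversing isometry of $\R^{133}$ fixes the Cartan $3$-form --- which reduces to the standard fact that $\mathrm{E}_7$ has trivial outer automorphism group; everything else is bookkeeping on top of Lemma~\ref{l3.9}.
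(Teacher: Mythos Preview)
Your proposal is correct and follows essentially the same route as the paper: identify the transitivity group $H$ from Step~1, note that the space of $H$-invariant vectors in $\Lambda^p\R^{n-1}$ is one-dimensional so that the even part of $f$ (if non-zero) evaluates at $(x_\star,v_\star)$ to a multiple of the distinguished form $\omega_0$, verify that the $\mathrm{O}(n-1)$-stabiliser of $\omega_0$ lies in $\mathrm{SO}(n-1)$ in each of the three cases (Kähler form, $\mathrm{G}_2$-form, Cartan $3$-form of $\mathfrak{e}_7$), and then invoke Lemma~\ref{l3.9}. The only cosmetic difference is that the paper uses the one-dimensionality to conclude $f=f_{\mathrm{even}}$ or $f=f_{\mathrm{odd}}$ and then rules out the former, whereas you work directly with $f_{\mathrm{even}}$; the arguments for property~\eqref{enum:2} are identical.
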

\begin{proof}
By {\bf B.2.3} and {\bf C} in Step 1 above, we have that $H=\mathrm{SU}(3)$ or $\mathrm{U}(3)$ for $n=7$, $H = \mathrm{SO}(3)$ or $\mathrm{G}_2$ for $n = 8$ and $H= \mathrm{E}_7/\Z_2$ for $n=134$.
One can easily check\footnote{Using the LiE program for instance.} that in all cases the $H$-representation on $\Lambda^p\R^{n-1}$ has exactly one trivial summand, generated by a $p$-form satisfying the properties \eqref{enum:1} and \eqref{enum:2} above. 

More precisely, denoting by $(\e_1,\e_2,\ldots,\e_n)$ an orthonormal basis in $\R^n$ and setting $\e^{i_1\ldots i_k} := \e_{i_1} \wedge \ldots \wedge \e_{i_k}$ for $1 \leq i_1 < \ldots < i_k \leq n$, the following holds:

\begin{enumerate}[itemsep=5pt]
\item[a)] The stabilizer of the $2$-form $$\omega_0:=\e^{14}+\e^{25}+\e^{36}$$ in $\Oo(6)$ is $\mathrm{U}(3)\subset\SO(6)$, and $\R\omega_0$ is the unique trivial summand of the representations of $\mathrm{SU}(3)$ and $\mathrm{U}(3)$ on $\Lambda^2\R^6$. (Here one can view the $2$-form $\omega_0$ as the complex structure on $\mathbb{R}^6$.)

\item[b)] The stabilizer of the $3$-form
\[\omega_0 = \e^{123} + \e^{145} + \e^{167} + \e^{246} - \e^{257} - \e^{347} - \e^{356}\]
in $\Oo(7)$ is $\mathrm{G}_2\subset\SO(7)$ (see \cite[Proof of Theorem 1]{Bryant1987}), and $\R\omega_0$ is the unique trivial summand of the representations of $\mathrm{SO}(3)$ and $\mathrm{G}_2$ on $\Lambda^3\R^7$.

\item[c)]We identify $\R^{133}$ with the Lie algebra $\mathfrak{e}_7$ and consider the 3-form $\omega_0\in\Lambda^3\mathfrak{e}_7$ defined by
$$\omega_0(x,y,z):=\langle [x,y],z\rangle,\qquad\forall x,y,z\in\mathfrak{e}_7.$$
For $a\in \Oo(133)$, we have $a(\omega_0)=\omega_0$ if and only if $a[x,y]=[ax,ay]$ for every $x,y\in\mathfrak{e}_7$, i.e. if and only if $a\in \mathrm{Aut}(\mathfrak{e}_7)$. Since all automorphisms of $\mathfrak{e}_7$ are inner (due to the fact that its Dynkin diagram has no symmetries), we deduce that the stabilizer of $\omega_0$ in $\Oo(133)$ is equal to $\mathrm{E}_7/\mathbb{Z}_2 \subset \SO(133)$. Again, $\R\omega_0$ is the unique trivial summand of the representations of $\mathrm{E}_7/\mathbb{Z}_2$ on $\Lambda^3\R^{133}$.
\end{enumerate}

Let $ f =  f_{\mathrm{even}} +  f_{\mathrm{odd}}$ be the flow-invariant section obtained by Proposition \ref{proposition:invariant}. Observe that both $f_{\mathrm{even}}$ and $f_{\mathrm{odd}}$ are flow-invariant, thus fixed by $H$. By the fact that the $H$-representation on $\Lambda^p\R^{n-1}$ has only one trivial summand, we have either $f=f_{\mathrm{even}}$ or $f=f_{\mathrm{odd}}$. Since $f$ satisfies the properties \eqref{enum:1} and \eqref{enum:2} above, Lemma \ref{l3.9} shows that $f$ cannot be even. We thus have $f= f_{\mathrm{odd}} \neq 0$.
\end{proof}

 We finally prove the following result.

\begin{lemma}\label{e7}
Assume that $n=134$ and $H$ is a subgroup of $\mathrm{E}_7/\Z_2$. Let $f \in C^\infty(SM,\Lambda^3 \mc{N})$ be the flow-invariant Lie bracket obtained by Proposition \ref{proposition:invariant}, i.e. a section which is equivalent at every point $(x,v)$ to the canonical $3$-form of the Lie algebra $\mathfrak{e}_7$, and such that $\X f = 0$. Then $f$ has degree at least $3$.
\end{lemma}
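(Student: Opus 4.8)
The plan is to argue by contradiction, assuming $\deg f=1$ and producing on $M$ a geometric object that cannot exist. First, the parity statement of Lemma \ref{l3.9} applies with $p=3$ and $\omega_0$ the Cartan $3$-form of $\mathfrak e_7$: its $\Oo(133)$-stabilizer is $\mathrm E_7/\Z_2\subset\SO(133)$ (Lemma \ref{3.13} c)), so properties \eqref{enum:1}--\eqref{enum:2} hold for any reduction of the structure group of $\Ss^{133}$ to a subgroup $H\le \mathrm E_7/\Z_2$, since such an $H$ fixes $\omega_0$. Hence $f$ has odd Fourier degree. As $f\neq0$, only the case $\deg f=1$ remains to be excluded (note that parity alone cannot do this: oddness requires $f(x,-v)=-f(x,v)$, which is compatible with $-\omega_0=(-\mathrm{Id})\omega_0$ being $\Oo(133)$-equivalent to $\omega_0$).

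Next I reduce the degree-$1$ case to a statement about a $4$-form on $M$. If $\deg f=1$ then $f$ is linear in $v$, say $f(x,v)=u_x(v)$ with $u_x\in\Hom(T_xM,\Lambda^3T_xM)$ smooth in $x$. Since $f$ takes values in $\mc N$ we have $\imath_v(u_x(v))=0$; polarising this quadratic identity gives $\imath_w u_x(v)=-\imath_v u_x(w)$, and combining this "propagation of antisymmetry" with the antisymmetry of $u_x(v)\in\Lambda^3T_xM$ forces the $4$-tensor $(v,w,a,b)\mapsto u_x(v)(w,a,b)$ to be totally skew. Thus there is a smooth $4$-form $\Omega\in C^\infty(M,\Lambda^4TM)$ with $f(x,v)=\imath_v\Omega_x$. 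The flow-invariance $\X f=0$ reads $\imath_v(\nabla_v\Omega)=0$ for all $v$; polarising once more and running the same argument shows that $\nabla\Omega$ is totally skew-symmetric, i.e.\ $\Omega$ is a Killing $4$-form on $M$. Moreover $\Omega\neq0$, because $\imath_v\Omega_x=f(x,v)$ is nowhere zero (it is equivalent to the $\mathfrak e_7$ bracket), and $|\Omega|$ is constant (since $\sum_i|\imath_{\e_i}\Omega|^2=4|\Omega|^2$ and each $|\imath_{\e_i}\Omega|^2=|f|^2$ is constant on $SM$ by ergodicity of the geodesic flow).

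It remains to rule out such an $\Omega$, and here one genuinely uses that $\imath_v\Omega_x$ defines a \emph{simple} Lie algebra. For each $(x,v)$ the bracket $[a,b]:=\sum_i(\imath_v\Omega_x)(a,b,\e_i)\e_i$ on $v^\perp$ is isomorphic to $\mathfrak e_7$, whose Killing form is a globally constant negative multiple $-\lambda\langle\cdot,\cdot\rangle$ of the metric (constancy of $\lambda$ follows from that of $|f|$). Expressing the Killing form through $\Omega$ yields the pointwise algebraic identity
\[
\langle \imath_a\imath_v\Omega_x,\ \imath_b\imath_v\Omega_x\rangle=\tfrac{\lambda}{2}\,\langle a,b\rangle,\qquad a,b\perp v,\ |v|=1,
\]
which rigidly constrains the $\Oo(T_xM)$-orbit of $\Omega_x$. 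From this one derives the contradiction along one of two routes. Algebraically: no $4$-form on $\R^{134}$ can have \emph{all} of its contractions $\imath_v\Omega$ defining an $\mathfrak e_7$-structure — contracting once writes $\Omega=v^\flat\wedge\omega_0+\beta$ with $\beta\in\Lambda^4\mathfrak e_7$, and imposing the Jacobi identity on $\imath_w\Omega$ for the directions $w$ near $v$, together with the rigidity $H^2(\mathfrak e_7,\mathfrak e_7)=0$, leaves no admissible $\beta$. Analytically: feeding $\Omega$ into the Bochner--Weitzenböck identity for Killing forms, $\int_M|\nabla\Omega|^2=c_4\int_M\langle q(R)\Omega,\Omega\rangle$, and using the rigidity above together with negative sectional curvature to force $\langle q(R)\Omega,\Omega\rangle\le0$ pointwise, hence $\nabla\Omega=0$; a parallel $4$-form of this non-degenerate type is then excluded, because the only negatively curved $134$-manifolds with reduced holonomy are complex hyperbolic quotients, whose parallel $4$-forms are multiples of the square of the Kähler form and have degenerate (non-simple) contractions.

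The main obstacle is precisely this last step. Because no pinching is assumed in the present lemma, the curvature term $\langle q(R)\Omega,\Omega\rangle$ cannot be handled by a perturbative, near-constant-curvature estimate, so the simplicity of the bracket $\imath_v\Omega$ must be used in an essential way: in the algebraic route, to exploit the rigidity $H^2(\mathfrak e_7,\mathfrak e_7)=0$, and in the analytic route, to show that the $\mathfrak e_7$-constraint rules out the "borderline" behaviour — such as $\langle q(R)\omega,\omega\rangle=0$ for the parallel Kähler $2$-form on a complex hyperbolic quotient — that would otherwise be compatible with negative curvature. Establishing this sharp incompatibility is the crux of the proof.
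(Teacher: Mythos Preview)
Your reduction to a $4$-form $\Omega$ on $M$ with $\imath_v\Omega$ equivalent to the $\mathfrak e_7$ Cartan $3$-form is exactly right, and your identity $\langle\imath_a\imath_v\Omega,\imath_b\imath_v\Omega\rangle=\tfrac{\lambda}{2}\langle a,b\rangle$ (equivalently $|\Omega(u,v)|^2=c|u\wedge v|^2$) is the correct first consequence of simplicity. However, the final step you outline is a genuine gap: the analytic route via Bochner--Weitzenb\"ock cannot work here, as you yourself note, since no pinching is assumed and the lemma must hold for \emph{any} negatively curved metric; and the algebraic route via $H^2(\mathfrak e_7,\mathfrak e_7)=0$ is only sketched, with the key claim ``leaves no admissible $\beta$'' unsubstantiated. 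Neither route is brought to a conclusion.

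The paper closes the argument by a short, purely pointwise linear-algebra computation that you are one step away from. Regard $\Omega$ as a \emph{symmetric} endomorphism of $\Lambda^2T_xM$ via $\langle\Omega(u\wedge v),w\wedge z\rangle=\Omega(u,v,w,z)$. Your identity, polarised in both slots, gives
\[
\langle\Omega(u\wedge z),\Omega(w\wedge v)\rangle+\langle\Omega(u\wedge v),\Omega(w\wedge z)\rangle=c\bigl(\langle u\wedge z,w\wedge v\rangle+\langle u\wedge v,w\wedge z\rangle\bigr).
\]
Now take the Jacobi identity on $a^\perp$, written as $\mathfrak S_{u,v,w}\langle\imath_a\Omega(w,v),\imath_a\Omega(u,z)\rangle=0$, and \emph{trace over $a$}: since $\imath_a\Omega(a,\cdot)=0$ one may sum over an orthonormal basis of all of $T_xM$, obtaining $\mathfrak S_{u,v,w}\langle\Omega(w\wedge v),\Omega(u\wedge z)\rangle=0$. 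These two relations together force $F:=\Omega^2-c\,\mathbbm{1}_{\Lambda^2}$ to vanish identically, so $\Omega/\sqrt{c}$ is an orthogonal involution of $\Lambda^2T_xM$. Its trace is therefore an integer of the same parity as $\dim\Lambda^2T_xM=\binom{134}{2}=67\cdot 133$, which is odd; but the trace of any $4$-form acting on $\Lambda^2$ is $\sum_{i<j}\Omega(\e_i,\e_j,\e_i,\e_j)=0$. This contradiction is the missing endgame --- no curvature, no cohomology, just the Jacobi identity traced once and a parity count.
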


\begin{proof}
By Lemma \ref{3.13}, $f$ had odd Fourier degree. Assume for the sake of a contradiction that the degree of $f$ is $1$. Then $f = f_1 \in C^\infty(M,TM \otimes \Lambda^3 TM)$. The condition $\imath_v f = 0$ reads $f(v,v,\cdot,\cdot)=0$, that is $f$ is a $4$-form on $M$ which we denote by $\phi \in C^\infty(M,\Lambda^4 TM)$.

The hypothesis on $f$ shows that for every unit vector $v$, there exists an isometry between $v^\perp$ and $\mathfrak{e}_7$, such that the $3$-form $\iota_v\phi$ is the pull-back of the canonical $3$-form of $\mathfrak{e}_7$. We will identify them by a slight abuse of notation. For every $u\in v^\perp$ one can thus interpret the $2$-form $\phi(v,u)$ as corresponding to the endomorphism $\mathrm{ad}_u$ acting on $\mathfrak{e}_7$. By the irreducibility of the adjoint representation of $\mathrm{E}_7$ we have $|\mathrm{ad}_u|^2=c|u|^2$ for some positive constant $c$ which only depends on the structure of $\mathrm{E}_7$ (indeed, $\mathfrak{e}_7 \ni u \mapsto |\mathrm{ad}_u|^2$ is an $\mathrm{E}_7$-invariant quadratic form so by Schur's lemma, it is proportional to the metric).

We can write this as
\begin{equation}
\label{equation:iso-weak}
|\phi(u,v)|^2=c|u\wedge v|^2,
\end{equation}
for every $u,v\in TM$. We can also see $\phi$ as a symmetric endomorphism $\phi : \Lambda^2 TM \to \Lambda^2 TM$ (the symmetry comes from the relation $\phi(u,v,w,z) = \langle \phi(u \wedge v), w \wedge z \rangle = \phi(w,z,u,v) = \langle \phi(w,z),u \wedge v\rangle$) and \eqref{equation:iso-weak} says that $\phi$ is an isometry on decomposable elements.

 By polarization in $u$ we obtain $\langle\phi(u,v),\phi(w,v)\rangle=c\langle u\wedge v,w\wedge v\rangle$, and by polarization in $v$ we get:
\begin{equation}
\label{equation:pola}
\langle\phi(u,z),\phi(w,v)\rangle + \langle\phi(u,v),\phi(w,z)\rangle=c\left(\langle u\wedge z,w\wedge v\rangle + \langle u\wedge v,w\wedge z\rangle \right).
\end{equation}
We now fix a unit vector $a \in TM$ and consider $u,v,w,z \in a^\perp$. The Jacobi identity on $a^\perp \simeq \mathfrak{e}_7$ implies that the following cyclic sum vanishes:
\[
\mathfrak{S}_{u,v,w} \langle \mathrm{ad}_v w, \mathrm{ad}_u z \rangle = 0 = \mathfrak{S}_{u,v,w} \langle \phi(a,v,w),\phi(a,u,z)\rangle = \mathfrak{S}_{u,v,w} \langle \phi(w,v,a),\phi(u,z,a)\rangle.
\]
This identity also holds for $u,v,w,z \in TM$ since $\phi(a,a,\cdot,\cdot)=0$ so it holds for all $a,u,v,w,z \in TM$. Taking the trace in $a$ over an orthonormal basis $(\e_1,\ldots,\e_n)$ of $TM$, we get:
\[
0 =  \mathfrak{S}_{u,v,w} \langle \phi(w,v),\phi(u,z)\rangle,
\]
that is
\begin{equation}
\label{equation:jacobi}
\langle \phi(u,z),\phi(w,v)\rangle + \langle \phi(w,z),\phi(v,u) \rangle + \langle \phi(v,z),\phi(u,w) \rangle = 0.
\end{equation}
Similarly, we have
\begin{equation}
\label{equation:jacobi2}
\langle u \wedge z, w \wedge v \rangle + \langle w \wedge z, v \wedge u\rangle + \langle v \wedge z, u \wedge w \rangle = 0.
\end{equation}
As a consequence, setting $F := \phi^2 - c \mathbbm{1}_{\Lambda^2}$, we get that $F$ is a symmetric endomorphism and using \eqref{equation:pola}, \eqref{equation:jacobi} and \eqref{equation:jacobi2} it satisfies the relations:
\[
\begin{array}{l}
\langle F(u \wedge z), w \wedge v \rangle + \langle F(u \wedge v), w \wedge z \rangle = 0,\\
\langle F(u \wedge z),w\wedge v\rangle + \langle F(w \wedge z), v \wedge u \rangle + \langle F(v \wedge z), u \wedge w\rangle = 0.
\end{array}
\]
It is straightforward to check that these relations imply that $F = 0$. Thus, setting $\phi' := \frac{1}{\sqrt{c}} \phi$, we get that $\phi'$ is symmetric and $\phi'^2 = \mathbbm{1}_{\Lambda^2}$, that is $\phi'$ is an orthogonal symmetry. Thus the trace of $\phi'$ is equal to the difference of the dimensions of its $1$ and $-1$ eigenspaces, so it is an odd integer since $\Lambda^2(TM)$ has odd dimension $\frac{134\times 133}{2} = 67 \times 133$. However, this is a contradiction since the trace of $\phi'$ is
$$\Tr(\phi')=\sum_{i<j}\langle \phi'(\e_i\wedge \e_j),\e_i\wedge \e_j\rangle=\frac{1}{\sqrt{c}} \sum_{i<j}\phi(\e_i,\e_j,\e_i,\e_j)=0.$$
\end{proof}

We observe that the above result actually holds more generally by replacing $\mathrm{E}_7/\mathbb{Z}_2$ with any simple compact Lie group of dimension $n-1=4k+1$.

The proof of Theorem \ref{theorem:reduction} is now complete.
\end{proof}

\begin{remark}
As a concluding remark, we observe that the ergodicity of the frame flow on negatively curved manifolds of odd dimension $n \neq 7$ proved by Brin-Gromov \cite{Brin-Gromov-80} is actually an immediate consequence of \cite[Theorem 1.A]{Leonard-71} which shows that there is no reduction of the structure group of $F \Ss^{n-1} \to \Ss^{n-1}$, unless $n=7$ and $H=\mathrm{U}(3)$ or $\mathrm{SU}(3)$.
\end{remark}

\section{Non-existence of flow-invariant structures under pinching conditions}

\label{section:invariant}

Theorem \ref{theorem:reduction} shows that non-ergodicity of the frame flow implies the existence of flow-invariant structures on $S\widehat{M}$, where $(\widehat{M}, \widehat{g}) \to (M, g)$ is a finite Riemannian cover. We now show that this is impossible under some pinching conditions.

\begin{theorem}
\label{theorem:invariant-structures}
Let $(M^n,g)$ be a closed Riemannian manifold with $\delta$-pinched negative sectional curvature. Then:
\begin{itemize}
\item There exists $\delta_{\Lambda^1}(n)$ given in \eqref{equation:delta-n-1} such that if $\delta > \delta_{\Lambda^1}(n)$, then there are no non-trivial odd flow-invariant sections $f \in C^\infty(SM,\mc{N})$,
\item There exists $\delta_{\Sym^2}(n)$ given in \eqref{equation:delta-sym-2} such that if $\delta > \delta_{\Sym^2}(n)$, then there are no non-trivial even flow-invariant orthogonal projectors $f \in C^\infty(SM,\Sym^2 \mc{N})$ of rank $r \leq \min\big(\rho(n)-1,(n-1)/2\big)$.
\end{itemize}
Moreover, the following exceptional cases hold:
\begin{itemize}
\item If $n=7$ and $\delta > \delta_{\mathrm{U}(3)}(7) := 0.4962...$, then there are no non-trivial odd flow-invariant almost-complex structures $f \in C^\infty(SM,\Lambda^2 \mc{N})$,
\item If $n=8$ and $\delta > \delta_{\mathrm{G}_2}(8) := 0.6212...$, then there are no non-trivial odd flow-invariant $\mathrm{G}_2$-structures $f \in C^\infty(SM,\Lambda^3 \mc{N})$,
\item If $n=134$ and $\delta > \delta_{\mathrm{E}_7}(134) := 0.5788...$, then there are no non-trivial odd flow-invariant Lie brackets $f \in C^\infty(SM,\Lambda^3 \mc{N})$ of degree $\geq 3$.
\end{itemize}
\end{theorem}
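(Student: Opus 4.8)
The plan is to argue by contradiction, using the localized twisted Pestov identity of Lemma~\ref{lemma:pestov} as an obstruction — this is the ``normal twisted conformal Killing tensor'' mechanism announced in the introduction. Suppose that in one of the five cases a non-trivial flow-invariant section $f$ of the prescribed type exists on $SM$. In every case $f$ is $\mc{N}$-valued, so through the inclusion $\mc{N}\subset\pi^*TM$ and its functorial consequences we may regard $f\in C^\infty(SM,\pi^*\mc{E})$ with $\mc{E}=TM$, $\Sym^2TM$, or $\Lambda^pTM$, the condition that $f$ takes values in $\mc{N}$ becoming the algebraic \emph{normal} constraint $\imath_v f=0$. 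By \cite[Theorem~4.1]{Guillarmou-Paternain-Salo-Uhlmann-16} the section $f$ has finite Fourier content, so we write $f=\hat f_0+\dots+\hat f_k$ with $\hat f_k\neq0$ and $k$ of the prescribed parity; in particular $k\geq1$, and the lowest admissible values of $k$ (which, by the monotonicity recorded in Theorem~\ref{theorem:ergodicity}, govern the worst thresholds) will have to be examined individually. Since $\X f=0$ and $\X$ shifts Fourier degree by $\pm1$, the leading mode $u:=\hat f_k\in C^\infty(M,\Omega_k\otimes\mc{E})$ satisfies $\X_+u=0$: it is a non-zero trace-free twisted conformal Killing tensor on $M$ by Lemma~\ref{lemma:link}, which additionally obeys the normal condition.

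Next I would plug $u$ into \eqref{equation:local-pestov}. Since $\X_+u=0$, it reduces to
\[
\frac{(n+k-2)(n+2k-4)}{n+k-3}\,\|\X_-u\|^2+\|Z(u)\|^2=\langle R\nabla_{\V}^{\mc{E}}u,\nabla_{\V}^{\mc{E}}u\rangle+\langle\mc{F}^{\mc{E}}u,\nabla_{\V}^{\mc{E}}u\rangle,
\]
whose left-hand side is $\geq0$; the aim is to show that, under a pinching hypothesis, the right-hand side is \emph{strictly negative} unless $u=0$. The Riemannian term is the favourable one: by definition $R(x,v)$ acts on $\mc{N}\otimes\mc{E}$ only through the symmetric operator $w\mapsto R_x(w,v)v$ on $v^\perp$, whose eigenvalues lie in $[-1,-\delta]$; writing $R=R_0+\tfrac{1+\delta}{2}G$ and using \eqref{eq:curvatureauxiliary} gives $\langle R\nabla_{\V}^{\mc{E}}u,\nabla_{\V}^{\mc{E}}u\rangle\leq\big(-\tfrac{1+\delta}{2}+\tfrac{1-\delta}{2}\big)\|\nabla_{\V}^{\mc{E}}u\|^2$, and on degree-$k$ twisted harmonics $\|\nabla_{\V}^{\mc{E}}u\|^2=k(n+k-2)\|u\|^2$. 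The twisted term $\langle\mc{F}^{\mc{E}}u,\nabla_{\V}^{\mc{E}}u\rangle$ is the obstruction: using \eqref{eq:F^E} together with the splittings $R^{\Lambda^p}=R_0^{\Lambda^p}+\tfrac{1+\delta}{2}G^{\Lambda^p}$ and $R^{\Sym^2}=R_0^{\Sym^2}+\tfrac{1+\delta}{2}G^{\Sym^2}$, one isolates its constant-curvature part $\tfrac{1+\delta}{2}\langle\mc{F}^{\mc{E}}_{\mathrm{cc}}u,\nabla_{\V}^{\mc{E}}u\rangle$ — which, via the round-sphere structure and the pointwise algebraic identity satisfied by the invariant object (and possibly some integration by parts in the fibres), reduces to a controlled constant times $\|u\|^2$ — and estimates the remainder by Cauchy--Schwarz together with \eqref{equation:rolambdap}, \eqref{equation:rosym2} (or \eqref{equation:bk-pas-sharp}) against $(1-\delta)\|u\|^2$.

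Reorganizing the constant-curvature contributions, \eqref{equation:local-pestov} then yields an inequality of the form $0\leq\tfrac{1+\delta}{2}\,m(n,k,r)\,\|u\|^2+c(n,k,r)\,(1-\delta)\,\|u\|^2$ with $m(n,k,r)<0$ (the constant-curvature ``margin'') and $c(n,k,r)\geq0$; requiring the right-hand side to be negative, discarding $\|u\|^2>0$, and optimizing over the admissible degrees $k$ and --- for the $\Sym^2$ family --- over the rank $r$, produces the explicit thresholds $\delta_{\Lambda^1}(n)$, $\delta_{\Sym^2}(n)$, $\delta_{\mathrm{U}(3)}(7)$, $\delta_{\mathrm{G}_2}(8)$, $\delta_{\mathrm{E}_7}(134)$, which are the quantities feeding into \eqref{equation:threshold-final}.

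The hard part is making these estimates sharp enough that $\delta_{\Lambda^1}(n)$ and $\delta_{\Sym^2}(n)$ come out below (or close to) the conjectural value $1/4$: a blunt bound on $\langle\mc{F}^{\mc{E}}u,\nabla_{\V}^{\mc{E}}u\rangle$ using only \eqref{equation:bk-pas-sharp} is far too lossy. One must (i) exploit the normal constraint $\imath_v f=0$, which lowers the effective rank of the tensors appearing and couples $\X_-u$ to $u$ (so that part of the discarded left-hand side can sometimes be retained); (ii) use the pointwise algebraic identities of the invariant object --- $f^2=f$ for a projector, $f^2=-\mathbbm{1}$ for an almost-complex structure, the $\mathrm{G}_2$ relations, and the Jacobi identity for the $\mathfrak{e}_7$-bracket --- to compute the constant-curvature margin $m(n,k,r)$ exactly rather than merely bound it; and (iii) treat the small exceptional degrees and dimensions ($k=1,2,3$ and $n=7,8,134$) by hand and verify the numerical values. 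Carrying out (i)--(iii) and checking the arithmetic is the substance of \S\ref{section:invariant}.
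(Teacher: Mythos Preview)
Your overall scaffolding is right --- take the top Fourier mode $u=\hat f_k$, feed it into the localized Pestov identity, split the curvature as $R_0+\tfrac{1+\delta}{2}G$, and bound the $R_0$-contribution by \eqref{equation:rolambdap}--\eqref{equation:rosym2}. But two of the steps you sketch are not what actually happens, and the proof does not go through as you describe it.

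First, the constant-curvature piece $\langle\mc{G}^{\mc{E}}u,\nabla_{\V}^{\mc{E}}u\rangle$ does \emph{not} reduce to a constant times $\|u\|^2$. A direct fiberwise computation (Lemma~\ref{lemma:inequality-2}) gives $(n+2k-4)\|\imath_v u\|^2+p\|u\|^2$ (twice that for $\Sym^2$), so the resulting inequality is genuinely of the form $B\|u\|^2+C\|\imath_v u\|^2\leq 0$ rather than a single scalar inequality. To make this useful one must control $\|\imath_v u\|^2$ relative to $\|u\|^2$, and this is done not through the pointwise algebraic relations you list, but by retaining the $\|\X_-u\|^2$ term on the left: from $\X_+u=0$ one has $\X\imath_v u=\imath_v\X_-u$, hence $\|\X_-u\|^2\geq\|\X_+\imath_v u\|^2$, and a \emph{second} application of the Pestov identity to $\imath_v u$ (now in $\Omega_{k-1}\otimes\Lambda^{p-1}$) bounds this below by $B^{\Lambda^{p-1}}_{n,k-1,\delta}\|\imath_v u\|^2$ minus a $\|\imath_v\imath_v u\|^2$ term. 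This cascade is the heart of the argument (Lemmas~\ref{lemma:lower-bound-xplus}--\ref{lemma:inequality-4} and Proposition~\ref{proposition:calcul}); simply discarding the left-hand side as $\geq 0$ yields thresholds that are far too weak.

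Second, your items (ii)--(iii) misidentify how the low degrees are handled. The specific algebraic identities of $f$ ($f^2=-\mathbbm{1}$, the $\mathrm{G}_2$ relations, Jacobi) are \emph{not} used to sharpen the Pestov margin; the Pestov cascade only sees the normal constraint. Instead, the cascade plus monotonicity in $k$ (Lemma~\ref{lemma:growth}) forces $k\leq 1$ for forms (or $k\leq 2$ for $\Sym^2$), and the remaining degree-$1$ case is eliminated by an entirely separate geometric argument: a degree-$1$ flow-invariant $f\in C^\infty(SM,\mc{N})$ is a nearly-K\"ahler structure on $M$, ruled out in negative curvature by the Nagy--Gray classification and Berger's $1/4$-pinching bound for K\"ahler manifolds; similarly one obtains nearly-parallel $\mathrm{G}_2$ or $\mathrm{Spin}(7)$ structures for $n=7,8$, which are Einstein with non-negative scalar curvature. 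The only place a pointwise identity of $f$ enters the Pestov computation is the $\Sym^2$ case at $k=2$, where Lemma~\ref{lemma:algebra} gives $f_0=\tfrac{r}{n}\mathbbm{1}$ and hence explicit values of $\|\imath_v u\|^2/\|u\|^2$.
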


The expressions for the thresholds are given by:
\begin{equation}
\label{equation:delta-n-1}
 \delta_{\Lambda^1}(n) = \left\{ \begin{array}{ll}  \tfrac{\tfrac{2}{3}\sqrt{3(n^2 - 1)} + \tfrac{1}{2}(n + 3)}{3(n + 1) + \tfrac{2}{3}\sqrt{3(n^2 - 1)} - \tfrac{1}{2} + \tfrac{1}{2} \tfrac{(n + 2)(5n + 2)}{n + 4}}, & \text{ if }~ n \leq 8, \\ 
\tfrac{\tfrac{2}{3} \sqrt{3(n^2 - 1)} + \tfrac{1}{2}}{3(n + 1) + \tfrac{2}{3}\sqrt{3(n^2 - 1)} - \tfrac{1}{2}},& \text{ if } n \geq 10,
\end{array} \right.
 \end{equation}
and:
\begin{equation}
\label{equation:delta-sym-2}
\delta_{\Sym^2}(n) = \tfrac{n+5+\tfrac{8}{3}\sqrt{(n-1)(n+2)}+\tfrac{2(n+2)(n+4)}{3(n+1)(n+6)}\left( n+3+\tfrac{4}{3} \sqrt{3(n^2-1)}\right)}{3(n+1) + \tfrac{8}{3}\sqrt{(n-1)(n+2)} + \tfrac{2(n+2)(n+4)}{3(n+1)(n+6)} \left(5n+3+\tfrac{4}{3}\sqrt{3(n^2-1)} \right)}.\end{equation}
The first values for $\delta_{\Lambda^1}$ are $\delta_{\Lambda^1}(4)= 0.2928...,\delta_{\Lambda^1}(6)= 0.2823...$ and appear in Theorem \ref{theorem:ergodicity}. The combination of Theorem \ref{theorem:reduction} and Theorem \ref{theorem:invariant-structures} immediately proves Theorem \ref{theorem:ergodicity}. 
The remaining part of the paper is devoted to the proof of Theorem \ref{theorem:invariant-structures}.

\label{section:degree}

\subsection{Normal twisted conformal Killing tensors}

In the following, we let $\E = \Lambda^p TM$ or $\E = \Sym^2 TM$. We have proved in Theorem \ref{theorem:reduction} that non-ergodicity of the frame flow gives rise to a flow-invariant smooth section $f \in C^\infty(SM,\pi^*\E)$ such that $\X f = 0$ and $\imath_v f = 0$. By \cite{Guillarmou-Paternain-Salo-Uhlmann-16}, such an invariant section must have finite Fourier content, i.e. it can be written as $f = f_0 + f_1 + \dotso + f_k$, where $f_i \in C^\infty(M,\Omega_i \otimes \E)$ and $f_k \neq 0$. We now define $u := f_k$. The two conditions $\X f = 0$ and $\imath_v f = 0$ then translate into the fact that $\X_+ u = 0$ and $\imath_v u \in C^\infty(M, \Omega_{k-1} \otimes \E)$, i.e. $\imath_v u$ is two degrees less than expected. Alternatively, we can see $u$ as the pullback to $SM$ of a twisted conformal Killing tensor on the base, i.e. there exists $K \in C^\infty(M,\Sym^k_0 TM \otimes \E)$ such that $u = \pi_k^* K$, $\mc{P} D K = 0$ (conformal Killing condition) and satisfying also the algebraic condition $\imath_v \pi_k^*K$ is of degree $k-1$. In the case where $\E = \Lambda^p TM$, we also observe that $\imath_v \imath_v u = 0$ since $u$ is a form. In the case where $\E = \Sym^2 TM$ and $f$ is an even orthogonal projector, we have:

\begin{lemma}
\label{lemma:algebra}
Let $f \in C^\infty(SM, \Sym^2 \mc{N})$ be an orthogonal projector of rank $r \geq 1$ with (finite) even degree $k$ and let $u:=f_k$. Then $k \geq 2$, $\imath_v u$ is of degree $k-1$ and $\imath_v \imath_v u$ is of degree $k-2$. Moreover, if $k=2$, then we have $f = \frac{r}{n} \mathbbm{1}_{TM} + f_2$, where $f_2 \in C^\infty(M, \Omega_2 \otimes \Sym^2 TM)$.
\end{lemma}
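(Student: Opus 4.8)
The plan is to exploit three structural properties of $f$: it is flow-invariant ($\X f = 0$), it is an orthogonal projector ($f^2 = f$, $f = f^{\top}$), and it is \emph{normal}, i.e. valued in $\Sym^2\mc{N}$, which for a section of $\Sym^2\pi^*TM$ is equivalent to $\imath_v f = 0$ (symmetry then forces $\mathrm{Im}\, f(x,v)\perp v$). As preliminary reductions, since the induced connection on $\Sym^2 TM$ is metric we have $X\,\Tr f = \Tr(\X f) = 0$ and $X|f|^2 = 0$, so by ergodicity of the geodesic flow the rank $r = \Tr f$ is a constant integer and $|f|^2 = r$; moreover $f$ has finite Fourier content, so $f = f_0 + f_2 + \dots + f_k$ with $f_k \neq 0$ (only even modes occur by hypothesis), and $u := f_k$ satisfies $\X_+ u = 0$ since $\X$ shifts the Fourier degree by $\pm 1$ and $f_{k+1}=0$.

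First I would settle $k \geq 2$: if $k = 0$ then $f$ is independent of $v$, so its image $F_x \subset T_xM$ satisfies $F_x \subset v^{\perp}$ for every $v \in S_xM$, forcing $F_x = 0$ and hence $r = 0$, a contradiction; since $k$ is even, $k \geq 2$. For the first normal claim I would use that $\imath_v$ also shifts the Fourier degree by $\pm 1$, mapping $C^\infty(M,\Omega_j\otimes\Sym^2 TM)$ into $C^\infty(M,\Omega_{j-1}\otimes TM)\oplus C^\infty(M,\Omega_{j+1}\otimes TM)$. Thus in $0 = \imath_v f = \sum_j \imath_v f_j$ the only term of Fourier degree $k+1$ is the degree-$(k+1)$ part of $\imath_v f_k$, so that part vanishes and $\imath_v u = \imath_v f_k$ is a \emph{pure} Fourier mode of degree $k-1$.

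The second normal claim, that $\imath_v\imath_v u$ has degree $k-2$, is the heart of the matter and the step I expect to be the main obstacle. Running the same bookkeeping on $0 = \imath_v\imath_v f = \sum_j\imath_v\imath_v f_j$ only removes the components of degree $\geq k+2$ from $\imath_v\imath_v u$ and yields a trivial cancellation at degree $k$; in particular the first normal condition alone does not suffice (contrast $\E = \Lambda^p TM$, where $\imath_v\imath_v u = 0$ for free by antisymmetry, which is exactly why the orthogonal-projector hypothesis is genuinely needed). The extra input is $f^2 = f$: expanding the composition $f\circ f = f$ into Fourier modes, comparing the components of top degree, and feeding in $\X_+ u = 0$ together with the already-established fact that $\imath_v u$ is pure of degree $k-1$, one should be able to conclude that the degree-$k$ harmonic component of $\imath_v\imath_v u = \langle f_k(\cdot)v, v\rangle$ vanishes; concretely this reduces to showing that a certain $\SO(n-1)$-component (the totally symmetrized trace-free part) of the degree-$(k-1)$ tensor $\imath_v u$ is zero. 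Carrying out this algebraic reduction cleanly is where the real work lies.

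Finally, the case $k = 2$ follows quickly once the above is in place. Then $f = f_0 + f_2$ with $f_0\in C^\infty(M,\Sym^2 TM)$ (independent of $v$) and $f_2\in C^\infty(M,\Omega_2\otimes\Sym^2 TM)$. From $\imath_v\imath_v f = 0$ we get $\langle f_0(x)v, v\rangle = -\langle f_2(x,v)v, v\rangle = -\imath_v\imath_v u$, which by the second normal claim has Fourier degree $k-2 = 0$, i.e. is independent of $v$; hence $v\mapsto\langle f_0(x)v, v\rangle$ is constant on each sphere $S_xM$, which forces $f_0 = c\,\mathbbm{1}_{TM}$ for some $c\in C^\infty(M)$. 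Taking traces in $r = \Tr f = \Tr f_0 + \Tr f_2$ and noting that $\Tr f_2$ is a section of $\Omega_2$ — a pure degree-$2$ function on $SM$, hence with vanishing fibre average — gives $nc = \Tr f_0 = r$, so $f = \tfrac{r}{n}\mathbbm{1}_{TM} + f_2$, as claimed.
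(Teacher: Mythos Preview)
Your proof has a genuine gap at precisely the step you flag as ``where the real work lies'': showing that $\imath_v\imath_v u$ has degree $k-2$. You propose to extract this from $f^2=f$ by comparing top Fourier modes and ``feeding in $\X_+u=0$'', but you do not carry this out, and it is not clear that composing $f$ with itself leads anywhere tractable. Note also that $\X_+u=0$ is irrelevant: the lemma is a pointwise algebraic statement, flow-invariance is neither assumed in its hypotheses nor used in the paper's proof.

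The paper's argument is different and uses only the scalar consequence $\Tr f=r$ of the projector hypothesis, together with the normal condition. Write $f=\pi_k^*K$ with $K\in C^\infty(M,\Sym^kTM\otimes\Sym^2TM)$, so that $\imath_v f=0$ reads $K(v,\ldots,v,v,\cdot)=0$. Differentiating this polynomial identity in $v$ twice (direction $w$) and combining the resulting relations gives
\[
K(v,\ldots,v,w,w)=\tfrac{k(k-1)}{2}\,K(w,w,v,\ldots,v).
\]
Tracing over $w$ yields $r=\Tr f(v)=\tfrac{k(k-1)}{2}\,(\mc{I}K)(v,\ldots,v)$, where $\mc{I}$ traces the $\Sym^k$ factor. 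In the decomposition $K=\sum_i\mc{J}^{k-2i}K_{2i}$ with $K_{2i}$ trace-free and $f_{2i}=\pi_{2i}^*K_{2i}$, the trace $\mc{I}K$ involves only $K_0,\ldots,K_{k-2}$, so the right-hand side is $\sum_{i\le k/2-1}c_i\,\imath_v\imath_v f_{2i}$ with $c_i>0$. Since this sum equals the constant $r$, its degree-$k$ component---to which only $\imath_v\imath_v f_{k-2}$ can contribute---vanishes. Then $\imath_v\imath_v u=-\sum_{i\le k/2-1}\imath_v\imath_v f_{2i}$ (from $\imath_v\imath_v f=0$) also has vanishing degree-$k$ part, as required. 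Your arguments for $k\ge2$, for $\deg(\imath_v u)=k-1$, and for the $k=2$ conclusion are correct; for the last, the paper instead reads off $f_0=\tfrac{r}{n}\mathbbm{1}_{TM}$ directly from the displayed identity with $k=2$, but your route via the already-established degree claim works just as well.
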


\begin{proof}
If $k=0$, $f = f_0$ can be identified with an element in $C^\infty(M,\Sym^2 TM)$ and the condition $\iota_v f = 0$ implies $f=0$.

We write $f = \pi_k^* K$ for some tensor $K \in C^\infty(M,\Sym^k TM \otimes \Sym^2 TM)$. The condition $\imath_v f = 0$ is the same as $K(v,\ldots,v,v,\cdot) = 0$. Differentiating (on $SM$) in $v$ and taking $w = \partial_v$, using the symmetries of the tensor, we get $kK(w,v,\ldots,v,v,\cdot)+K(v,\ldots,v,w,\cdot)=0$. Applying to $v$ and $w$, this gives the relations
\begin{equation}
\label{equation:relations1}
k K(w,v,\ldots,v,v)+ K(v,\ldots,v,w)=0, ~~~~ k K(w,v,\ldots,v,v,w) + K(v,\ldots,v,w,w)=0.
\end{equation}
Differentiating once again the first of these relations, we get
\begin{equation}
\label{equation:relations2}
3k K(w,v,\ldots,v,w) + k(k-1) K(w,w,v,\ldots,v,v) + K(v,\ldots,v,w,w) = 0.
\end{equation}
Combining the second relation of \eqref{equation:relations1} and \eqref{equation:relations2}, we get
\begin{equation}
\label{equation:relation}
K(v,\ldots,v,w,w) = \frac{k(k-1)}{2} K(w,w,v,\ldots,v).
\end{equation}
By \eqref{equation:decomposition}, we can write $K = \sum_{i=0}^{k/2} \mc{J}^{k-2i} K_{2i}$, where $K_{2i} \in \Sym^{2i}_0 TM \otimes \Sym^2 TM$ and we have $f = \pi_k^* K = \sum_{i=0}^{k/2} f_{2i}$ with $f_{2i} = \pi_{2i}^* K_{2i}$. Taking the trace in the $w$-variable in \eqref{equation:relation}, we then obtain:
\[
\begin{split}
r & = \Tr(f(v))=\sum_{i=1}^n K(v,\ldots,v,\e_i,\e_i) \\
& = \frac{k(k - 1)}{2} \sum_{i=1}^n K(\e_i,\e_i,v,\ldots,v) 
= \frac{k(k - 1)}{2} (\mc{I} K)(v,\ldots,v) \\
& = \frac{k(k - 1)}{2} \sum_{i=0}^{k/2-1} c_i \mc{J}^{k-2-2i} K_{2i}(v,\ldots,v) = \frac{k(k - 
1)}{2} \sum_{i=0}^{k/2-1} c_i  \imath_v \imath_v f_{2i}(v)\end{split}
\]
where $c_i > 0$ are some positive constants. The term of highest degree in the last sum is $\imath_v \imath_v f_{k-2}(v)$ which, in principle, could be a sum of spherical harmonics of degrees $k-4$, $k-2$ and $k$. But since the total sum is equal to $r$, the highest degree vanishes so each term in the sum has degree $\leq k-2$. Using the relation $\imath_v \imath_v f = 0 = \imath_v \imath_v u + \sum_{i=0}^{k/2-1} \imath_v \imath_v f_{2i}$, we then deduce that $\imath_v \imath_v u$ has degree $k-2$.

In the particular case where $k=2$, taking unit $v$ and $w$, \eqref{equation:relation} gives that
\[
\imath_w \imath_w (f(v)) = \imath_v \imath_v (f_0 + f_2(w)),
\]
and taking the trace in $w$, we obtain $\Tr(f(v))=r= n \langle f_0 v, v\rangle$ for all $v$, that is $f_0 = \frac{r}{n} \mathbbm{1}_{TM}$. 
\end{proof}

 It is now worth introducing the following terminology:

\begin{definition}
\label{definition:nckt}
We say that $K \in C^\infty(M,\Sym^k_0 TM \otimes \E)$ is a \emph{normal twisted conformal Killing tensor}, if it satisfies $\mc{P} D K = 0$ (twisted conformal Killing condition), $\imath_v \pi_k^* K$ is of degree $k-1$ and $\imath_v \imath_v \pi_k^* K$ is of degree $k-2$ (normal condition).
\end{definition}

For $\E = \Lambda^p TM$, the condition on $\imath_v \imath_v \pi_k^*K$ is automatically satisfied since it is always $0$, while for $\E = \Sym^2 TM$, in our situation it is guaranteed by Lemma \ref{lemma:algebra}.
The proof of Theorem \ref{theorem:invariant-structures} will be a consequence of Proposition \ref{proposition:calcul} below. First of all, we introduce the constants:
\begin{equation}
\label{equation:constant-b}
\begin{array}{l}
B^{\Lambda^p}_{n,k,\delta} :=  \delta k(n+k-2) -  \dfrac{(1+\delta)p}{2}  -  \dfrac{2p}{3}(1-\delta) \left[k(n+k-2)(n-1)\right]^{1/2}, \\B^{\Sym^2}_{n,k,\delta} := B^{\Lambda^2}_{n,k,\delta},
\end{array}
\end{equation}
 for $k,p\geq 0$, and
\begin{equation}
\label{equation:constant-c}
\begin{array}{l}
C^{\Lambda^p}_{n,k,\delta} :=\dfrac{k(n+k-2)(n+2k-4)}{(n+k-3)(k-1)(n+2k-2)} B^{\Lambda^{p-1}}_{n,k-1,\delta} - \dfrac{ (n+2k-4)(1+\delta)}{2}, \\
C^{\Sym^2}_{n,k,\delta} :=\dfrac{k(n+k-2)(n+2k-4)}{(n+k-3)(k-1)(n+2k-2)} B^{\Lambda^{1}}_{n,k-1,\delta} - (n+2k-4)(1+\delta), \\
\hspace{4cm} 
\end{array}
\end{equation}
 for $k\geq 2$, $p\geq 1$. For $k=1$, we define $C^{\Lambda^p}_{n,1,\delta} = -(n-2)\frac{1+\delta}{2}$ and $C^{\Sym^2}_{n,1,\delta} = -(n-2)(1+\delta)$. We define
 \[
 D_{n,k,\delta} := \dfrac{(n+2k-6)k(n+k-2)(n+2k-4)}{(n+k-3)(k-1)(n+2k-2)}\dfrac{(1+\delta)}{2},
 \]
 with the convention that this is $0$ for $k=1$.

\begin{proposition}
\label{proposition:calcul}
Let $K \in C^\infty(M,\Sym^k_0 TM \otimes \E)$ be a normal twisted conformal Killing tensor and further assume that $k \geq 2$ if $\E = \Lambda^p TM$ and $k \geq 3$ if $\E=\Sym^2 TM$. If $u:=\pi_k^* K \in C^\infty(M, \Omega_k \otimes \E)$ denotes the corresponding section of $\pi^*\E$ over $SM$, then we have:
\begin{equation}
\label{equation:contradiction}
B^{\E}_{n,k,\delta} \|u\|^2 + C^{\E}_{n,k,\delta} \|\imath_v u\|^2 - D_{n,k,\delta}\|\imath_v \imath_v u\|^2 \leq 0.
\end{equation}
\end{proposition}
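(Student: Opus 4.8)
\emph{Strategy.} I would feed $u=\pi^*_kK$ into the twisted Pestov identity (Lemma~\ref{lemma:pestov}), discard the non-negative term $\|Z(u)\|^2$, and estimate the right-hand side with the curvature bounds of \S\ref{ssection:curvature-tensor-bounds}, separating the ``constant curvature'' part of each curvature operator (which contributes an \emph{equality} involving $\|\imath_vu\|^2$) from the ``centered'' part built on $R_0$ (which contributes the $(1-\delta)$-error terms making up $B^{\E}_{n,k,\delta}$). The term $\|\X_-u\|^2$ I would bound below by passing to the once-contracted section $w:=\imath_vu$, which the normal condition together with $\mc{I}K=0$ makes a genuine degree-$(k-1)$ section valued in $\Lambda^{p-1}TM$; one further application of Pestov to $w$, combined with the commutation $\X\imath_v=\imath_v\X$, produces the summand $B^{\Lambda^{p-1}}_{n,k-1,\delta}$ of $C^{\E}_{n,k,\delta}$ and, through its own constant-curvature term, the quantity $D_{n,k,\delta}$.

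\emph{Pestov reduction and the centered terms.} Since $\mc{P}\dd K=0$, Lemma~\ref{lemma:link}(2) gives $\X_+u=\pi^*_{k+1}\mc{P}\dd K=0$, so $\X u=\X_-u\in C^\infty(M,\Omega_{k-1}\otimes\E)$ and \eqref{equation:local-pestov} reduces, after dropping $\|Z(u)\|^2\ge0$, to
\[
\frac{(n+k-2)(n+2k-4)}{n+k-3}\,\|\X_-u\|^2\;\le\;\langle R\nabla_{\V}^{\E}u,\nabla_{\V}^{\E}u\rangle+\langle\mc{F}^{\E}u,\nabla_{\V}^{\E}u\rangle .
\]
As $u\in C^\infty(M,\Omega_k\otimes\E)$ one has the exact identity $\|\nabla_{\V}^{\E}u\|^2=\langle\Delta_{\V}^{\E}u,u\rangle=k(n+k-2)\|u\|^2$. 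I would then write $R=\tfrac{1+\delta}{2}G+R_0$ as in \eqref{eq:R_0def} and likewise $F_{\nabla}=\tfrac{1+\delta}{2}G^{\E}+R^{\E}_0$ (with $\E=\Lambda^pTM$ or $\Sym^2TM$), so that $\mc{F}^{\E}=\tfrac{1+\delta}{2}\mc{G}^{\E}+\mc{F}^{\E}_0$ by \eqref{equation:twisted-curvature}, \eqref{eq:F^E}. Using that $G(x,v)$ acts as $-\mathrm{Id}$ on $\mc{N}$, the Riemannian term is $-\tfrac{1+\delta}{2}\|\nabla_{\V}^{\E}u\|^2+\langle R_0\nabla_{\V}^{\E}u,\nabla_{\V}^{\E}u\rangle$. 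The centered contributions are errors: by the curvature symmetries and \eqref{equation:curvatureauxiliary} the operator $w\mapsto R_0(w,v)v$ on $\mc{N}(x,v)$ has norm $\le\tfrac{1-\delta}{2}$, whence $|\langle R_0\nabla_{\V}^{\E}u,\nabla_{\V}^{\E}u\rangle|\le\tfrac{1-\delta}{2}k(n+k-2)\|u\|^2$; and from \eqref{eq:F^E} with a basis adapted to $v$, \eqref{equation:rolambdap} (resp. \eqref{equation:rosym2}) and Cauchy--Schwarz, $|\langle\mc{F}^{\E}_0u,\nabla_{\V}^{\E}u\rangle|\le\tfrac{2p}{3}(1-\delta)\sqrt{(n-1)k(n+k-2)}\,\|u\|^2$ (with $p=2$ for $\Sym^2TM$). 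Combining, the Riemannian term together with $\langle\mc{F}^{\E}_0u,\nabla_{\V}^{\E}u\rangle$ is at most $-\big(B^{\E}_{n,k,\delta}+\tfrac{(1+\delta)p}{2}\big)\|u\|^2$.

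\emph{Constant-curvature term, the recursion, and conclusion.} It remains to evaluate $\langle\mc{G}^{\E}u,\nabla_{\V}^{\E}u\rangle$ exactly and to bound $\|\X_-u\|^2$ below. For the first, using the explicit form \eqref{equation:algebra} of $G^{\Lambda^p}$ (resp. the commutator action for $G^{\Sym^2}$) and the identification of $\mc{N}$ with the tangent bundle of the fibre spheres, one computes $\langle\mc{G}^{\E}u,\nabla_{\V}^{\E}u\rangle=p\|u\|^2+c_{\E}(n+2k-4)\|\imath_vu\|^2$ with $c_{\E}=1$ for $\Lambda^pTM$ and $c_{\E}=2$ for $\Sym^2TM$; the piece $\tfrac{(1+\delta)p}{2}\|u\|^2$ then cancels the like term of the previous step. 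For the second, the tautological section being flow-parallel gives $\X\imath_v=\imath_v\X$, so $\X_+w$ is the degree-$k$ component of $\imath_v(\X_-u)$ and hence $\|\X_+w\|\le\|\X_-u\|$; moreover, by the normal condition and $\mc{I}K=0$, $w=\imath_vu\in C^\infty(M,\Omega_{k-1}\otimes\Lambda^{p-1}TM)$ is pure of degree $k-1$ with $\imath_vw=\imath_v\imath_vu$ pure of degree $k-2$ (for $\Sym^2TM$ this is Lemma~\ref{lemma:algebra}). Applying \eqref{equation:local-pestov} to $w$, keeping the $\|\X_+w\|^2$ term, discarding $\|\X_-w\|^2,\|Z(w)\|^2\ge0$, and bounding its curvature terms as above (its $\mc{G}^{\Lambda^{p-1}}$-term being $(p-1)\|w\|^2+(n+2k-6)\|\imath_vw\|^2$), I get $\tfrac{(k-1)(n+2k-2)}{k}\|\X_+w\|^2\ge B^{\Lambda^{p-1}}_{n,k-1,\delta}\|\imath_vu\|^2-\tfrac{(1+\delta)(n+2k-6)}{2}\|\imath_v\imath_vu\|^2$, hence
\[
\frac{(n+k-2)(n+2k-4)}{n+k-3}\,\|\X_-u\|^2\;\ge\;\frac{k(n+k-2)(n+2k-4)}{(n+k-3)(k-1)(n+2k-2)}\,B^{\Lambda^{p-1}}_{n,k-1,\delta}\,\|\imath_vu\|^2-D_{n,k,\delta}\,\|\imath_v\imath_vu\|^2 .
\]
Substituting both evaluations into the reduced Pestov inequality and collecting the coefficients of $\|u\|^2$, $\|\imath_vu\|^2$ and $\|\imath_v\imath_vu\|^2$ yields exactly \eqref{equation:contradiction} (the values $c_{\E}=1,2$ producing the $-\tfrac{(1+\delta)(n+2k-4)}{2}$, resp. $-(1+\delta)(n+2k-4)$, summand of $C^{\E}_{n,k,\delta}$); the hypotheses $k\ge2$ (for $\Lambda^pTM$) and $k\ge3$ (for $\Sym^2TM$) guarantee that $w$ still has positive degree and that Lemma~\ref{lemma:algebra} applies, so all the prefactors are well defined.

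\emph{Main obstacle.} The delicate input is the content of the third paragraph: the closed-form evaluation of the round-sphere quantity $\langle\mc{G}^{\E}u,\nabla_{\V}^{\E}u\rangle$ — a representation-theoretic computation that must come out precisely equal to $p\|u\|^2+c_{\E}(n+2k-4)\|\imath_vu\|^2$ — together with the verification that $\imath_vu$ genuinely inherits the normal structure and pure degree $k-1$ from $K$, so that the second application of the Pestov identity is legitimate and the constants assemble into the stated $C^{\E}_{n,k,\delta}$ and $D_{n,k,\delta}$. Throughout, the form-valued case (where $\imath_v\imath_vu\equiv0$ and $D_{n,k,\delta}$ drops out) must be kept separate from the $\Sym^2TM$-valued case, where Lemma~\ref{lemma:algebra} supplies the extra algebraic identities and forces $k\ge3$.
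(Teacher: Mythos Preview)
Your proposal is correct and follows essentially the same approach as the paper: the paper likewise applies the localized Pestov identity to $u$ with $\X_+u=0$, drops $\|Z(u)\|^2$, bounds the Riemannian term by $-\delta k(n+k-2)\|u\|^2$ (equivalent to your $G+R_0$ splitting), splits $\mc F^{\E}=\tfrac{1+\delta}{2}\mc G^{\E}+\mc F_0^{\E}$ with the exact computation of $\langle\mc G^{\E}u,\nabla_{\V}^{\E}u\rangle$ (its Lemma~\ref{lemma:inequality-2}) and the Cauchy--Schwarz bound on the $\mc F_0^{\E}$ term, and then bounds $\|\X_-u\|^2$ from below via $\|\X_-u\|^2\ge\|\imath_v\X_-u\|^2=\|\X\imath_vu\|^2\ge\|\X_+\imath_vu\|^2$ and a second Pestov application to $w=\imath_vu$. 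The computation you flag as the main obstacle is exactly the content of the paper's Lemma~\ref{lemma:inequality-2}, done by a direct fibrewise calculation rather than pure representation theory.
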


\begin{proof}
The normal twisted conformal Killing condition reads:
\begin{equation}
\label{equation:assumptions}
\X_+ u = 0,\,\,\, \imath_v u\,\, \mathrm{has\,\,degree}\,\, k - 1,\,\,\, \imath_v \imath_v u\,\, \mathrm{has\,\,degree}\,\, k - 2.
\end{equation}
We consider $u \in C^\infty(M, \Omega_k \otimes \E)$ satisfying \eqref{equation:assumptions}. The $\X_+u$ term in the localized Pestov identity \eqref{equation:local-pestov} vanishes and we get:
\begin{equation}
\label{equation:central-equality}
\dfrac{(n+k-2)(n+2k-4)}{n+k-3} \|\X_-u\|^2 + \|Z(u)\|^2  =  \langle R\nabla_{\V}^{\E}u, \nabla_{\V}^{\E}u \rangle +  \langle \mc{F}^{\E}u, \nabla_{\V}^{\E}u \rangle.
\end{equation}
We will bound the terms on the left-hand side of \eqref{equation:central-equality} from below while we will bound the terms on the right-hand side from above. The term $Z(u)$ seems difficult to control and we simply use $\|Z(u)\|^2 \geq 0$. The first term on the right-hand side of \eqref{equation:central-equality} involves the curvature tensor $R^{\E}$ can be decomposed according to \S \ref{ssection:curvature-tensor-bounds} as $R^{\E} = R_0^{\E} + \frac{1 + \delta}{2}G^{\E}$. 
Using \eqref{eq:F^E}, we write correspondingly for any orthonormal basis $(\e_i)_{i = 1}^n \subset T_xM$
\begin{equation}
\label{equation:splitting}
\mc{F}^{\E} = \mc{F}_0^{\E} + \frac{1 + \delta}{2}.\mc{G}^{\E}, \quad \mc{F}^{\E}_0(x, v) = \sum_{i = 1}^n \e_i \otimes R_0^{\E}(v, \e_i), \quad \mc{G}^{\E}(x, v) = \sum_{i = 1}^n \e_i \otimes G^{\E}(v, \e_i).
\end{equation}
We first deal with the term $\mc{F}^{\E}_0$:

\begin{lemma}
\label{lemma:inequality-1}
Given $u \in C^\infty(M,\Omega_k \otimes \E)$, one has:
\[
| \langle \mc{F}_0^{\E}u, \nabla_{\V}^{\E}u \rangle | \leq \dfrac{2p}{3}(1-\delta) \left[k(n+k-2)(n-1)\right]^{1/2} \|u\|^2,
\]
with the convention that $p=2$ if $\E=\Sym^2 TM$.
\end{lemma}

\begin{proof}
We fix $x \in M$. Below, all the scalar products below are the ones on $L^2(S_xM)$, $(\e_i)_{1 \leq i \leq n}$ is an orthonormal basis of $T_xM$ and $(\e_\alpha)_\alpha$ is an orthonormal basis of $\E$. We have, writing $\nabla_{\V} u_\alpha = \sum_i \langle \nabla_{\V}u_\alpha, \e_i \rangle \e_i$ that:
\[
\begin{split}
\langle \mc{F}_0^{\E}u, \nabla_{\V}^{\E}u \rangle_{L^2(S_xM)} & = \sum_{\alpha} \int_{S_xM} \langle R_0^{\E}(v,\nabla_{\V} u_\alpha) u,\e_\alpha\rangle dv \\
& = \sum_i \int_{S_xM}\langle R_0^{\E}(v,\e_i-v_i.v)u ,\sum_\alpha \langle \nabla_{\V} u_\alpha,\e_i\rangle \e_\alpha\rangle dv.
\end{split}
\]
By \eqref{equation:rolambdap} and \eqref{equation:rosym2}, we have:
\[
|\langle R_0^{\E}(X,Y)\omega,\tau\rangle|\leq \tfrac{2p}{3}(1-\delta) |X||Y||\omega||\tau|,
\]
for every tangent vector $X$, $Y$ and $p$-forms or symmetric $2$-tensors $\omega$, $\tau$ (where $p=2$ in the latter case). Hence, by Cauchy-Schwarz:
\begin{equation}\label{eq:Cauchy-Schwarz}
\begin{split}
& |\langle \mc{F}_0^{\E}u, \nabla_{\V}^{\E}u \rangle_{L^2(S_xM)}| \\
& \leq \frac{2p}{3}(1-\delta) \sum_i \int_{S_xM} |\e_i-v_i.v| |u| \left|\sum_\alpha \langle \nabla_{\V} u_\alpha,e_i\rangle \e_\alpha \right| dv \\
& \leq \frac{2p}{3}(1-\delta) \left( \sum_i \int_{S_xM} |\e_i-v_i.v|^2 |u|^2 dv \right)^{1/2} \left( \sum_i \int_{S_xM} \left|\sum_\alpha \langle \nabla_{\V} u_\alpha,\e_i\rangle \e_\alpha \right|^2 dv \right)^{1/2} \\
& = \frac{2p}{3}(1-\delta) \left( \int_{S_xM} (n-1) |u|^2 dv \right)^{1/2} \left( \sum_i \int_{S_xM} \sum_\alpha \langle \nabla_{\V} u_\alpha,\e_i\rangle^2 dv \right)^{1/2} \\
& =\frac{2p}{3}(1-\delta) (n-1)^{1/2} \|u\| \left( \sum_i \int_{S_xM} \sum_\alpha \langle \nabla_{\V} u_\alpha,\e_i\rangle^2 dv \right)^{1/2}.
\end{split}
\end{equation}
Observe that we may compute fibre-wise in $T_xM$, where $\partial_i$ denotes differentiation in $\e_i$:
\begin{equation}\label{eq:vertical_deriv_i}
\langle \nabla_{\V} u_\alpha,\e_i \rangle = \langle \nabla_{\V}^{\mathrm{tot}} u_\alpha - v.\langle{\nabla_{\V}^{\mathrm{tot}}u_\alpha, v}\rangle,\e_i\rangle = \partial_i u_\alpha - v_i.ku_\alpha,
\end{equation}
where $\nabla_{\V}^{\mathrm{tot}}(\cdot) = \nabla_{\V}(\cdot) + v. \langle{\cdot, v}\rangle$ is the total gradient, and the last equality follows from Euler's relation on homogeneous functions. Using the same relations, we obtain:
\[
\begin{split}
 \sum_{\alpha,i} \langle \nabla_{\V} u_\alpha ,\e_i\rangle^2  & =  \sum_{\alpha,i} |\partial_i u_\alpha|^2 - \sum_\alpha 2  k u_\alpha  \sum_i v_i\partial_i u_\alpha + \sum_\alpha k^2 \sum_i v_i^2 u_\alpha^2 =  \sum_{\alpha,i} |\partial_i u_\alpha|^2 - k^2 |u|^2 \\
 & = \sum_{\alpha} |\nabla_{\V}^{\mathrm{tot}} u_\alpha|^2 - k^2|u|^2 = \sum_{\alpha} |\nabla_{\V} u_\alpha|^2 = |\nabla_{\V}^{\E}u|^2.
 \end{split}
\]
Integrating on the sphere, we get
\[
\int_{S_x M} \sum_{\alpha,i} \langle \nabla_{\V} u_\alpha ,\e_i\rangle^2 =  \|\nabla_{\V}^{\E} u\|^2_{L^2(S_xM)} = \langle \Delta_{\V}^{\E} u,u \rangle = k(n+k-2)\|u\|^2.
\]
This completes the proof.
\end{proof}

We now deal with the term involving $\mc{G}^{\E}$ in \eqref{equation:splitting}:

\begin{lemma}
\label{lemma:inequality-2}
Let $u \in C^\infty(M,\Omega_k \otimes \E)$ such that $\imath_v u$ is of degree $k-1$. Then we have:
\[
 \langle \mc{G}^{\Lambda^p} u, \nabla_{\V}^{\Lambda^p}u \rangle = (n+2k-4) \|\imath_v u\|^2 + p\|u\|^2,
 \]
 and
 \[
  \langle \mc{G}^{\Sym^2} u, \nabla_{\V}^{\Sym^2}u \rangle = 2 (n+2k-4) \|\imath_v u\|^2 + 2\|u\|^2.
 \]
\end{lemma}

\begin{proof}
We treat the two cases separately. \\

\emph{Case $\E = \Lambda^p TM$:} We use the same conventions as in the proof of Lemma \ref{lemma:inequality-1}. Firstly, note that $\imath_v u$ being of degree $k - 1$ is equivalent to:
	\begin{equation}\label{eq:v_contraction}
		\imath_v u = \sum_\alpha u_\alpha. \imath_v \e_\alpha = \sum_{\alpha, i} u_\alpha v_i. \imath_{\e_i}\e_\alpha = (n + 2(k - 1))^{-1} \partial_i u_\alpha |v|^2 \imath_{\e_i} \e_\alpha	.
	\end{equation}	
	By \eqref{eq:F^E}, we may write $\mc{G}^{\Lambda^p} = \sum_i \e_i \otimes G^{\Lambda^p}(v, \e_i)$, so that on $S_xM$:
	\begin{align*}
		 \langle \mc{G}^{\Lambda^p} u, \nabla_{\V}^{\Lambda^p}u \rangle_{L^2} &= \sum_{\alpha, \beta} \int_{S_xM} u_\alpha. \langle{G^{\Lambda^p}(v, \nabla_{\V}u_\beta) \e_{\alpha}, \e_{\beta}}\rangle\\
		 &= \sum_{\alpha, \beta, i, j} \int_{S_xM} u_\alpha. \langle{\dotso\wedge \langle{G(v, \nabla_{\V}u_\beta) \e_{\alpha_i}, \e_j}\rangle.\e_j \wedge\dotso, \e_\beta}\rangle\\
		 &= \sum_{\alpha, \beta, i, j} \int_{S_xM} u_\alpha. G(v, \nabla_{\V}u_\beta, \e_{\alpha_i}, \e_j).\langle{\e_j \wedge \imath_{\e_{\alpha_i}} \e_{\alpha}, \e_\beta}\rangle\\
		 &= \sum_{\alpha, \beta, i, j} \int_{S_xM} u_\alpha. \Big(\langle{v, \e_{i}}\rangle.\langle{\nabla_{\V} u_\beta, \e_{j}}\rangle - \langle{v, \e_{j}}\rangle.\langle{\nabla_{\V} u_\beta, \e_{i}}\rangle\Big). \langle{\imath_{\e_{i}} \e_{\alpha}, \imath_{\e_j} \e_\beta}\rangle,
	\end{align*}
	where we used that the wedge product is adjoint to contraction. Denote by $A$ and $B$ the first and second terms in the last expression, respectively. We compute:
	\begin{align*}
		A &= \sum_{\alpha, \beta, i, j} \int_{S_xM} u_\alpha. \langle{v, \e_{i}}\rangle.\langle{\nabla_{\V} u_\beta, \e_{j}}\rangle. \langle{\imath_{\e_{i}} \e_{\alpha}, \imath_{\e_j} \e_\beta}\rangle = \sum_{\beta, i} \int_{S_xM} (\partial_j u_\beta - k.u_\beta v_j). \langle{\imath_{v} u, \imath_{\e_j} \e_\beta}\rangle\\
		&= (n + k - 2) \int_{S_xM} |\imath_v u|^2,
	\end{align*}
	where we used \eqref{eq:vertical_deriv_i} in the first line and \eqref{eq:v_contraction} in the second one. Next, for $B$ we have:
	\begin{align*}
		B &= \sum_{\alpha, \beta, i, j} \int_{S_xM} u_\alpha. \langle{v, \e_{j}}\rangle.(\partial_i u_\beta - k.u_\beta v_i). \langle{\imath_{\e_{i}} \e_{\alpha}, \imath_{\e_j} \e_\beta}\rangle\\
		&= -k \int_{S_xM} |\imath_v u|^2 +  \sum_{\alpha, \beta, i, j} \int_{S_xM} u_\alpha. (\partial_i(u_\beta v_j) - u_\beta. \delta_{ij}) \langle{\imath_{\e_{i}} \e_{\alpha}, \imath_{\e_j} \e_\beta}\rangle\\
		&= -k \int_{S_xM} |\imath_v u|^2 - p\int_{S_xM} |u|^2 + (n + 2(k - 1))^{-1} \sum_{\alpha, \beta, i, j} \int_{S_xM} u_\alpha.\partial_i(\langle{\imath_{\e_{i}} \e_{\alpha}, \partial_j u_\beta. \imath_{\e_j}\e_{\beta}}\rangle)\\
		&= -k \int_{S_xM} |\imath_v u|^2 - p\int_{S_xM} |u|^2 + (n + 2(k - 1))^{-1}\sum_{\alpha, \beta, i, j}\int_{S_xM} u_\alpha\\
		&\times (2v_i. \partial_j u_\beta + \partial_i \partial_j u_\beta).\langle{\imath_{\e_{i}} \e_{\alpha}, \imath_{\e_j}\e_{\beta}}\rangle = -(k - 2) \int_{S_xM} |\imath_v u|^2 - p\int_{S_xM} |u|^2,
	\end{align*}
	where we used \eqref{eq:vertical_deriv_i} in the first line, \eqref{eq:v_contraction} in the third and final lines, as well as the fact that $u_\alpha$'s are of degree $k$ while $\partial_i \partial_j u_\alpha$ is of strictly smaller degree. We also used the following identity on $p$-forms: $\sum_i \e_i \wedge \imath_{\e_i} = p. \id$. This completes the proof when $\E =\Lambda^p TM$.\\
	
	\emph{Case $\E =\Sym^2 TM$:} Write $u = \sum_{i, j} u_{ij} \e_{ij}$ where $\e_{ij} = \e_i^* \otimes \e_j$ and $u_{ij} = u_{ji}$ by symmetry of $u$. We begin by observing that $u(v)$ is of degree $k - 1$ translates into:
	\begin{equation}\label{eq:u(v)relation}
		u(v) = \sum_{i,j} u_{ij} v_i \e_j = (n + 2(k - 1))^{-1} \sum_{i, j}\partial_i u_{ij} \e_j.
	\end{equation}
	Therefore
	\small
	\begin{align*}
			\langle{\mc{G}^{\Sym^2}u, \nabla_{\V}^{\Sym^2}u}\rangle &= \sum_{i, \ell, m} \int \langle{\e_i, \nabla_{\V}u_{\ell m}}\rangle. \langle{G^{\Sym^2}(v, \e_i)u, \e_{\ell m}}\rangle = \sum_{i, j, \ell, m} \int u_{ij} \langle{[G(v, \nabla_{\V} u_{\ell m}), \e_{ij}], \e_{\ell m}}\rangle\\
		&= \sum_{i, j, \ell, m} \int u_{ij} \big\langle{\e_i^* \otimes G(v, \nabla_{\V}u_{\ell m})\e_j + (G(v, \nabla_{\V}u_{\ell m}) \e_i)^* \otimes \e_j, \e_{\ell m}}\big\rangle\\
		&= \sum_{i, j, \ell, m} \int u_{ij} \big(\delta_{i\ell} \langle{G(v, \nabla_{\V}u_{\ell m})\e_j, \e_{m}}\rangle + \delta_{jm}\langle{G(v, \nabla_{\V}u_{\ell m}) \e_i, \e_{\ell}}\rangle\big)\\
		&\hspace{-1.5cm}= 2\sum_{i, j, \ell} \int u_{ij} \big\langle{G(v, \nabla_{\V}u_{i\ell})\e_j, \e_{\ell}}\big\rangle = 2\sum_{i, j, \ell} \int u_{ij} \Big(\langle{v, \e_j}\rangle.\langle{\nabla_{\V}u_{i\ell}, \e_{\ell}}\rangle - \langle{v, \e_{\ell}}\rangle.\langle{\nabla_{\V}u_{i\ell}, \e_j}\rangle\Big).
	\end{align*}
	\normalsize
	Note that we used the symmetry $u_{ij} = u_{ji}$ in the last line. Denoting by $A$ the first and by $B$ the second term, we get using \eqref{eq:u(v)relation}:
	\begin{align*}
		A = \sum_{i, j, \ell} \int u_{ij}. v_j. (\partial_{\ell} u_{i\ell} - k u_{i\ell}v_{\ell}) = -k \|u(v)\|^2 +  (n + 2k - 2) \|u(v)\|^2 = (n + k - 2) \|\imath_v u\|^2.
	\end{align*}
	For the term $B$, we have:
	\begin{align*}
		B &= \sum_{i, j, \ell} \int u_{ij}. v_{\ell}. (\partial_j u_{i\ell} - k.v_j. u_{i\ell}) = -k\|\imath_vu\|^2 + \sum_{i, j, \ell} \int u_{ij} (\partial_j (v_{\ell} u_{i\ell}) - \delta_{j\ell} u_{i\ell})\\
		&=-k\|\imath_vu\|^2 - \|u\|^2 + (n + 2(k - 1))^{-1} \sum_{i, j, \ell} \int u_{ij} \partial_j (|v|^2 \partial_{\ell} u_{i\ell})\\
		&=-k\|\imath_vu\|^2 - \|u\|^2 + 2(n + 2(k - 1))^{-1} \sum_{i, j, \ell} \int u_{ij} v_j \partial_{\ell} u_{i\ell} = -(k - 2) \|\imath_vu\|^2 - \|u\|^2.
	\end{align*}
	Here we used \eqref{eq:u(v)relation} in the second and last lines, and the fact that $\partial_j \partial_{\ell} u_{i\ell}$ is of degree at most $k - 2$, while $u_{ij}$ is of degree $k$. This completes the proof.
\end{proof}

\begin{lemma}
\label{lemma:inequality-3}
Given $u \in C^\infty(M,\Omega_k \otimes \E)$
\[
\langle R\nabla_{\V}^{\E}u, \nabla_{\V}^{\E}u \rangle \leq - \delta k(n+k-2) \|u\|^2 .
\]
\end{lemma}

\begin{proof}
This is immediate using the upper bound on the sectional curvature:
\[
\langle R\nabla_{\V}^{\E}u, \nabla_{\V}^{\E}u \rangle \leq -\delta \|\nabla_{\V} u\|^2 = -\delta \langle \Delta_{\V} u, u \rangle = -\delta k(n+k-2) \|u\|^2.
\]
\end{proof}

Overall, the right-hand side of \eqref{equation:central-equality} can be bounded using Lemmas \ref{lemma:inequality-1}, \ref{lemma:inequality-2}, \ref{lemma:inequality-3}. We derive from the three previous lemmas the following lower bound:

\begin{lemma}
\label{lemma:lower-bound-xplus}
Let $u \in C^\infty(M,\Omega_k \otimes \Lambda^p TM)$ such that $\imath_v u$ is of degree $k-1$, and $k \geq 1$. Then:
\[
\|\X_+ u\|^2 \geq \dfrac{k+1}{k(n+2k)} \left( B^{\Lambda^p}_{n,k,\delta} \|u\|^2 -\frac{1+\delta}{2}(n+2k-4)\|\imath_v u\|^2 \right),
\]
where $B^{\Lambda^p}_{n,k,\delta}$ is defined in \eqref{equation:constant-b}.
\end{lemma}

\begin{proof}
We apply the localized Pestov identity \eqref{equation:local-pestov}. The terms $\|\X_-u\|^2$ and $\|Z(u)\|^2$ are simply bounded from below by $\geq 0$. Using Lemmas \ref{lemma:inequality-1}, \ref{lemma:inequality-2}, \ref{lemma:inequality-3}, we obtain:
\[
\begin{split}
\dfrac{k(n+2k)}{k+1}\|\X_+ u\|^2 & \geq - \langle R\nabla_{\V}^{\Lambda^p}u, \nabla_{\V}^{\Lambda^p}u \rangle -  \langle \mc{F}^{\Lambda^p}u, \nabla_{\V}^{\E}u \rangle  \\
& \geq \left(  \delta k(n+k-2) -  \dfrac{(1+\delta)p}{2}  -  \dfrac{2p}{3}(1-\delta) \left[k(n+k-2)(n-1)\right]^{1/2} \right) \|u\|^2 \\
& \hspace{3cm} - \frac{1+\delta}{2}(n+2k-4)\|\imath_v u\|^2 \\
&  = B^{\Lambda^p}_{n,k,\delta}\|u\|^2 - \frac{1+\delta}{2}(n+2k-4)\|\imath_v u\|^2.
\end{split}
\]
\end{proof}

We now have:

\begin{lemma}
\label{lemma:inequality-4}
Under the assumptions \eqref{equation:assumptions}, and if $k \geq 2$, we have:
\[
\|\X_-u\|^2 \geq \dfrac{k}{(k-1)(n+2k-2)} \left(B^{\Lambda^{p-1}}_{n,k-1,\delta} \|\imath_v u\|^2 -\frac{1+\delta}{2}(n+2k-6)\|\imath_v \imath_v u\|^2 \right),
\]
with the convention that $p=2$ if $\E=\Sym^2 TM$.
\end{lemma}

\begin{proof}
Observe that $\X \imath_v u =\imath_v \X u = \imath_v \X_- u$, and thus applying Lemma \ref{lemma:lower-bound-xplus} with $\imath_v u$, we get: 
\[
\begin{split}
\|\X_- u\|^2 & \geq \|\imath_v \X_- u\|^2 \\
&  = \|\X \imath_v u\|^2 = \|\X_- \imath_v u\|^2 + \|\X_+ \imath_v u\|^2 \\
&  \geq \dfrac{k}{(k-1)(n+2k-2)} \left(B_{n,k-1,\delta}^{\Lambda^{p-1}} \|\imath_v u\|^2  -\frac{1+\delta}{2}(n+2k-6)\|\imath_v \imath_v u\|^2 \right).
\end{split}
\]
\end{proof}

We can now complete the proof of Proposition \ref{proposition:calcul}. Inserting the bounds of Lemmas \ref{lemma:inequality-1}, \ref{lemma:inequality-2}, \ref{lemma:inequality-3}, \ref{lemma:inequality-4} in \eqref{equation:central-equality}, we obtain:
\begin{equation}
\label{equation:total}
\begin{split}
& \dfrac{(n+k-2)(n+2k-4)}{n+k-3} \dfrac{k}{(k-1)(n+2k-2)} \left(B^{\Lambda^{p-1}}_{n,k-1,\delta} \|\imath_v u\|^2 - \frac{1+\delta}{2}(n+2k-6)\|\imath_v \imath_v u\|^2\right) \\
& \leq \text{RHS of \eqref{equation:central-equality}} \\
& \leq  \dfrac{1+\delta}{2} \left(\eps(\E)(n+2k-4) \|\imath_v u\|^2 + p\|u\|^2 \right) +  \dfrac{2p}{3}(1-\delta) \left[k(n+k-2)(n-1)\right]^{1/2} \|u\|^2 \\
& \hspace{3cm} - \delta k(n+k-2)\|u\|^2,
\end{split}
\end{equation}
where $\eps(\E)=2$ if $\E=\Sym^2 TM$ and $\eps(\E)=1$ if $\E=\Lambda^p TM$. This inequality can now be rearranged as
\[
B^{\E}_{n,k,\delta}\|u\|^2 + C^{\E}_{n,k,\delta}\|\imath_v u\|^2 - D_{n,k,\delta}\|\imath_v \imath_v u\|^2 \leq 0,
\]
\end{proof}

\begin{remark}\rm
	It can be easily checked that the Cauchy-Schwarz estimate in \eqref{eq:Cauchy-Schwarz} is \emph{sharp for $k = 1$, $p = 1$}. We believe that for degrees $k > 1$ this estimate is not sharp. Let us assume $p = 1$ and $k = 3$ in what follows. Let $C(n) > 0$ be the optimal constant such that $F(u) := \sum_{i = 1}^n \int_{S_xM} |u - u_i \e_i|.|\nabla_{\V} u_i|.\omega_i(v) \leq C(n) \|u\|^2$ holds for $u = \sum_{i = 1}^n u_i \e_i$, where $u_i$ are spherical harmonics of degree $k = 3$ on $\mathbb{S}^{n-1}$; here $\omega_i(v) \in [\frac{1}{2}, 1]$ are certain weights that are obtained by collecting leftover terms in the proof of Lemma \ref{lemma:bk} (these are not necessary in the article). By our estimate \eqref{eq:Cauchy-Schwarz} we get $C(n) \leq \sqrt{3(n^2 - 1)}$. The space of spherical harmonics of degree $k = 3$ for $n = 4, 6, 8$ has dimension $16, 50, 112$, respectively. Results (up to three decimal digits) of a computer program evaluating $F(u)$ at random spherical harmonics in these cases are given in the following table. Here $\delta_{\Lambda^1, \mathrm{new}}(n)$ denotes the new corresponding value of $\delta_{\Lambda^1}(n)$ in Theorem \ref{theorem:invariant-structures}. Results also indicate that the quotient $\frac{C(n)}{\sqrt{3(n^2 - 1)}}$ converges to $1$ as $n \to \infty$, i.e. that the Cauchy-Schwarz bound is asymptotically optimal, and that this particular estimate does not suffice to prove Conjecture \ref{conjecture:brin} directly (however an optimal estimate would improve Theorem \ref{theorem:ergodicity}).
	
	\[
\begin{tabular}{ | c | c | c | c | }
 \hline			
    $n$& $4$ & $6$ & $8$\\
   \hline
   $C(n)$ & $5.294$ & $8.614$ & $12.193$ \\
   \hline
 $\frac{C(n)}{\sqrt{3(n^2-1)}}$ & $0.789$ & $0.841$ & $0.886$\\
 \hline  
 $\delta_{\Lambda^1, \mathrm{new}}(n)$ & $0.267$ & $0.262$ & $0.261$\\
 \hline  
  \# random points & $500$ & $50$ & $5$\\
 \hline  
 \end{tabular}
 \]
\end{remark}

\subsection{Non-existence of invariant structures}

We can now deduce Theorem \ref{theorem:invariant-structures} from the previous paragraph.

\begin{proof}[Proof of Theorem \ref{theorem:invariant-structures}]
We treat separately the $\Sym^2 TM$ and the $\Lambda^p TM$ cases. \\

\emph{Case $\E=\Lambda^p TM$:} We consider an element $f \in C^\infty(SM,\Lambda^p \mc{N})$ such that $\X f = 0$. By \cite{Guillarmou-Paternain-Salo-Uhlmann-16}, it has finite degree, so we can write $f = f_0 + \ldots + f_k$, where $f_i \in C^\infty(M,\Omega_i \otimes \Lambda^p TM)$ and $f_k \neq 0$. We set $u := f_k$. Then $u$ is a normal conformal Killing tensor in the sense of Definition \ref{definition:nckt} and it thus satisfies the conclusions of Proposition \ref{proposition:calcul}. Moreover, $\imath_v \imath_v u = 0$. We then argue as follows: if $B^{\Lambda^p}_{n,k,\delta} > 0$ and $B^{\Lambda^p}_{n,k,\delta} + C^{\Lambda^p}_{n,k,\delta} > 0$, then $u= 0$. Indeed, assume that this condition is satisfied. Then, if $C^{\Lambda^p}_{n,k,\delta} \geq 0$, we obtain $B^{\Lambda^p}_{n,k,\delta} \|u\|^2 \leq 0$, hence $u = 0$. If $C^{\Lambda^p}_{n,k,\delta} \leq 0$, then we simply use the bound $\|\imath_vu\|^2 \leq \|u\|^2$ which gives $(B^{\Lambda^p}_{n,k,\delta} + C^{\Lambda^p}_{n,k,\delta})\|u\|^2 \leq 0$ and thus $u=0$. We shall now see that this condition translates into a pinching condition on $\delta$. \\

We introduce the notation $r_{n,p,k}:=\frac{2p}{3} \sqrt{\frac{n-1}{k(n+k-2)}}$ and $s_{n,p,k}:=(n+2k-2)r_{n,p,k}$. Then one can write
\tiny
\[
\begin{split}B^{\Lambda^p}_{n,k,\delta}&=\delta\left(k(n+k-2)-\frac p2+k(n+k-2) r_{n,p,k} \right)-\left(\frac p2+k(n+k-2)r_{n,p,k} \right)\\
&=k(n+k-2)\left(\delta\left(1-\frac{p}{2k(n+k-2)}+r_{n,p,k} \right)-\left(\frac{p}{2k(n+k-2)}+r_{n,p,k} \right)\right),
\end{split}
\]
\normalsize
and
\tiny
\[
\begin{split}C^{\Lambda^p}_{n,k,\delta}&=\dfrac{k(n+k-2)(n+2k-4)}{(k-1)(n+k-3)(n+2k-2)} B_{n,k-1,\delta}^{\Lambda^{p-1}} - \frac{ (n+2k-4)(1+\delta)}{2} \\
&=\frac{k(n+k-2)(n+2k-4)}{n+2k-2}\left[\delta\left(1-\frac{(p-1)}{2(k-1)(n+k-3)}+r_{n,p-1,k-1}-\frac{(n+2k-2)}{2k(n+k-2)} \right)\right.\\
&-\left.\left(\frac{(p-1)}{2(n+k-3)(k-1)}+r_{n,p-1,k-1}+\frac{ (n+2k-2)}{2k(n+k-2)} \right)\right].
\end{split}
\]
\normalsize
It follows that $B^{\Lambda^p}_{n,k,\delta}>0$ if and only if 
\be\label{delta1}\delta>\delta_1:=\frac{\frac{p}{2k(n+k-2)}+r_{n,p,k}}{1-\frac{p}{2k(n+k-2)}+r_{n,p,k} }=\frac{\frac{p(n+2k-2)}{2k(n+k-2)}+s_{n,p,k}}{n+2k-2-\frac{p(n+2k-2)}{2k(n+k-2)}+s_{n,p,k} }\ee
and  $B^{\Lambda^p}_{n,k,\delta}+C^{\Lambda^p}_{n,k,\delta}>0$ if and only if 
\tiny
\[
\begin{split}\delta>\delta_2&:=\frac{\frac{(p-1)}{2(n+k-3)(k-1)}+r_{n,p-1,k-1}+\frac{(n+2k-2)}{2k(n+k-2)}+\frac{n+2k-2}{n+2k-4}\Big(\frac{p}{2k(n+k-2)}+r_{n,p,k}\Big)}{1-\frac{(p-1)}{2(n+k-3)(k-1)}+r_{n,p-1,k-1}-\frac{ (n+2k-2)}{2k(n+k-2)}+\frac{n+2k-2}{n+2k-4}\Big(1-\frac{p}{2k(n+k-2)}+r_{n,p,k}\Big) }\\
&=\frac{\frac{(p-1)(n+2k-4)}{2(n+k-3)(k-1)}+s_{n,p-1,k-1}+\frac{ (n+2k-2)(n+2k-4)}{2k(n+k-2)}+\frac{p(n+2k-2)}{2k(n+k-2)}+s_{n,p,k}}{n+2k-4-\frac{(p-1)(n+2k-4)}{2(n+k-3)(k-1)}+s_{n,p-1,k-1}-\frac{ (n+2k-2)(n+2k-4)}{2k(n+k-2)}+{n+2k-2}-\frac{p(n+2k-2)}{2k(n+k-2)}+s_{n,p,k} }.\end{split}
\]
\normalsize
In other words, we get:
\[
\left(B^{\Lambda^p}_{n,k,\delta} > 0 \text{ and } B^{\Lambda^p}_{n,k,\delta} + C^{\Lambda^p}_{n,k,\delta} > 0\right) \Leftrightarrow \delta > \max(\delta_1,\delta_2).
\] 
We further claim that the following holds:

\begin{lemma}
\label{lemma:growth}
The functions $(k,p) \mapsto \delta_1(n,k,p), \delta_2(n,k,p)$ are increasing in $p$ and decreasing in $k$, for $k\geq 2$, $p\geq 1$.
\end{lemma}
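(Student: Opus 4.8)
The plan is to prove each of the four assertions by differentiating and reducing it to the positivity of an explicit elementary expression. Fix $n$ and write $m=m(k):=k(n+k-2)$, which is strictly increasing in $k$, and $m':=(k-1)(n+k-3)=m(k-1)$.

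\emph{The function $\delta_1$.} By \eqref{delta1}, $\delta_1=\tfrac{a+b}{1-a+b}$ with $a=\tfrac{p}{2m}$ and $b=r_{n,p,k}=\tfrac{2p}{3}\sqrt{(n-1)/m}$; an elementary estimate gives $1-a+b>0$ on the whole range $k\geq 2$, $p\geq 1$. For the $p$-dependence, fix $n,k$ and write $a=\alpha p$, $b=\beta p$ with $\alpha,\beta>0$ independent of $p$; then $\delta_1=\tfrac{(\alpha+\beta)p}{1+(\beta-\alpha)p}$, so $\partial_p\delta_1=\tfrac{\alpha+\beta}{(1+(\beta-\alpha)p)^2}>0$. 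For the $k$-dependence, substitute $t:=m^{-1/2}$ (strictly decreasing in $k$); then $a=\tfrac p2 t^2$, $b=ct$ with $c:=\tfrac{2p}{3}\sqrt{n-1}$, so $\delta_1=N(t)/D(t)$ with $N=\tfrac p2 t^2+ct$ and $D=1-\tfrac p2 t^2+ct$. Expanding and collecting (the $t^3$-terms cancel),
\[
N'(t)D(t)-N(t)D'(t)=pt+c+pct^2>0,
\]
so $\delta_1$ is strictly increasing in $t$, hence strictly decreasing in $k$.

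\emph{The function $\delta_2$: dependence on $p$.} Write $B^{\Lambda^p}_{n,k,\delta}=\beta_1\delta-\beta_0$ and $B^{\Lambda^{p-1}}_{n,k-1,\delta}=\beta_1'\delta-\beta_0'$, with $\beta_1,\beta_1'>0$. Since $C^{\Lambda^p}_{n,k,\delta}=A\,B^{\Lambda^{p-1}}_{n,k-1,\delta}-E(1+\delta)$ with $A:=\tfrac{m(n+2k-4)}{m'(n+2k-2)}>0$ and $E:=\tfrac{n+2k-4}{2}>0$, we have $B^{\Lambda^p}_{n,k,\delta}+C^{\Lambda^p}_{n,k,\delta}=\gamma_1\delta-\gamma_0$ with $\gamma_1=\beta_1+A\beta_1'-E$ and $\gamma_0=\beta_0+A\beta_0'+E$; one checks $\gamma_1>0$, so $\delta_2=\gamma_0/\gamma_1$ (and $\delta_1=\beta_0/\beta_1$). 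As functions of $p$ alone, $\beta_0,\beta_1,\beta_0',\beta_1'$ are affine, hence so are $\gamma_0,\gamma_1$, so $\delta_2$ is fractional-linear in $p$ and $\mathrm{sgn}(\partial_p\delta_2)$ is constant in $p$. To fix this sign I note that the $\sqrt{m(n-1)}$-contributions to $\beta_0$ and $\beta_1$ agree (likewise for the primed pair), so the $p$-slopes of $\gamma_0$ and $\gamma_1$ differ by exactly $1+A>0$; hence $\delta_2\to(\text{a limit}>1)$ as $p\to\infty$, while at $p=1$ (where $\beta_0'=0$, $\beta_1'=m'$) a direct computation gives $\gamma_0-\gamma_1=(n+2k-3)\bigl(1-\tfrac{2m}{n+2k-2}\bigr)<0$, because $\tfrac{2m}{n+2k-2}>1$ for $k\geq 2$. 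Thus $\delta_2(1)<1<\lim_{p\to\infty}\delta_2(p)$, and constancy of the sign forces $\partial_p\delta_2>0$ for all $p\geq 1$.

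\emph{The function $\delta_2$: dependence on $k$ --- the main obstacle.} This is the only genuinely computational step, since no substitution linearises $\delta_2=\gamma_0(k)/\gamma_1(k)$ simultaneously: $k$ enters through the two distinct radicals $\sqrt m,\sqrt{m'}$, the rational factor $A$, and the linear term $E$. I would argue that it suffices to show that, for each fixed $\delta$ with $\delta_1<\delta\leq 1$, the affine-in-$\delta$ quantity $B^{\Lambda^p}_{n,k,\delta}+C^{\Lambda^p}_{n,k,\delta}$ is increasing in $k$, so that its unique zero $\delta_2$ can only move left. Using
\[
\partial_k B^{\Lambda^p}_{n,k,\delta}=(n+2k-2)\Bigl(\delta-\tfrac{1-\delta}{2}\,r_{n,p,k}\Bigr),
\]
the bound $\delta>\delta_1\geq\tfrac{r_{n,p,k}}{2+r_{n,p,k}}$ from the first part (which makes the bracket positive), and the analogous expression for $\partial_k\bigl(A\,B^{\Lambda^{p-1}}_{n,k-1,\delta}-E(1+\delta)\bigr)$ together with the explicit forms of $A$ and $E$, the desired inequality reduces, after clearing denominators and using $m'\leq m$, to a polynomial inequality in $n,k,\delta$ valid on the relevant range. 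The difficulty is that the two summands $B^{\Lambda^p}_{n,k,\delta}-E(1+\delta)$ and $A\,B^{\Lambda^{p-1}}_{n,k-1,\delta}$ are not individually increasing in $k$ near $\delta=\delta_2$, so they must be combined, and the accounting with the two square roots is delicate. Once this is done, $\max(\delta_1,\delta_2)$ inherits both monotonicities from its arguments and the lemma follows.
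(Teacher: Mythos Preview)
Your treatment of $\delta_1$ (via the substitution $t=m^{-1/2}$) and of $\delta_2$ in $p$ (via fractional-linearity and a two-point comparison) are correct, and different from the paper's method; the paper instead rewrites $\tfrac{1}{\delta_i}-1$ as a single ratio and reads off the monotonicity from its numerator and denominator. Both routes work for these three claims.

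The genuine gap is the $k$-monotonicity of $\delta_2$. What you present there is a plan, not a proof: you reduce to showing that $k\mapsto B^{\Lambda^p}_{n,k,\delta}+C^{\Lambda^p}_{n,k,\delta}$ is increasing on a suitable $\delta$-range, note that the two natural summands are not individually monotone, and then say the combined estimate ``reduces \ldots\ to a polynomial inequality'' whose verification with two independent radicals $\sqrt{m}$ and $\sqrt{m'}$ is ``delicate''---and stop. That delicate step is the entire content. There is also a circularity risk in your reduction: you need the inequality at $\delta=\delta_2(k)$, but your hypothesis $\delta>\delta_1$ is only useful once you know $\delta_2(k)>\delta_1(k)$, which is not established (and need not hold in general).

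The paper bypasses this computation entirely. Writing $s_{n,p,k}=(n+2k-2)r_{n,p,k}$, one has the two elementary identities
\[
s_{n,p,k}=\tfrac{2p\sqrt{n-1}}{3}\sqrt{4+\tfrac{(n-2)^2}{k(n+k-2)}},\qquad
\tfrac{p(n+2k-2)}{k(n+k-2)}=\tfrac{p}{k}+\tfrac{p}{n+k-2},
\]
which make $s_{n,p,k}$ visibly decreasing in $k$. With these, the paper rewrites
\[
\frac{1}{\delta_2}-1=\frac{2n+4k-6-2F_{n,k,p}}{\,s_{n,p-1,k-1}+s_{n,p,k}+F_{n,k,p}\,},
\]
where $2F_{n,k,p}=\tfrac{p-1}{k-1}+\tfrac{p-1}{n+k-3}+4+\tfrac{(n-2)^2}{k(n+k-2)}+\tfrac{p-2}{k}+\tfrac{p-2}{n+k-2}$ is a sum of terms each decreasing in $k$ (for $p\ge 2$), so the numerator increases and the denominator decreases in $k$; for $p=1$ a separate one-line rewriting of the same ratio handles the case. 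This decomposition is the missing idea: it separates the two radicals into independent, manifestly monotone pieces and makes the ``delicate accounting'' unnecessary.
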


\begin{proof}
Note that 
$$s_{n,p,k}=\frac{2p\sqrt{n-1}}{3} \sqrt{\frac{(n+2k-2)^2}{k(n+k-2)}}=\frac{2p\sqrt{n-1}}{3} \sqrt{4+\frac{(n-2)^2}{k(n+k-2)}}$$
is decreasing in $k$ and increasing in $p$, and by \eqref{delta1} one can write 
$$\frac{1}{\delta_1}-1=\frac{n+2k-2-\frac{p(n+2k-2)}{k(n+k-2)}}{\frac{p(n+2k-2)}{2k(n+k-2)}+s_{n,p,k}}=\frac{n+2k-2-\frac{p}{n+k-2}-\frac{p}{k}}{\frac{1}{2}\Big(\frac{p}{n+k-2}+\frac{p}{k}\Big)+s_{n,p,k}},
$$
thus showing that $\delta_1$ is decreasing in $k$, increasing in $p$.

We claim that $\delta_2$ is also decreasing in $k$. For that, we introduce
\[
F_{n,k,p} := \frac{(p-1)(n+2k-4)}{2(n+k-3)(k-1)}+\frac{(n+2k-2)(n+2k-4)}{2k(n+k-2)}+\frac{p(n+2k-2)}{2k(n+k-2)}.
\]
From the above expression for $\delta_2$, we get 
$$\frac1{\delta_2}-1=\frac{2n+4k-6-2 F_{n,k,p}}{s_{n,p-1,k-1}+s_{n,p,k}+F_{n,k,p}}$$
so our claim would follow if $F_{n,k,p}$ is decreasing in $k$ and increasing in $p$. The statement for $p$ is immediate. As to the statement for $k$, this indeed holds for $p\ge 2$ since one can write the above expression as 
$$2F_{n,k,p}=\frac{p-1}{k-1}+\frac{p-1}{n+k-3}+4+\frac{ (n-2)^2}{k(n+k-2)}+\frac{p-2}{k}+\frac{p-2}{n+k-2},
$$
which is sum of decreasing functions. For $p=1$ this argument fails, but in this case one can write
$$\frac1{\delta_2}-1=\frac{2n+4k-6-\frac{ (n+2k-2)(n+2k-3)}{k(n+k-2)}}{\frac{ (n+2k-2)(n+2k-3)}{2k(n+k-2)}+s_{n,1,k}}=\frac{2-\frac1k-\frac1{n+k-2}}{\frac1{2k}+\frac1{2(n+k-2)}+\frac{s_{n,1,k}}{n+2k-3}},$$
and the numerator is increasing in $k$, whereas the denominator is decreasing in $k$.
\end{proof}

We then set
\begin{align*}
\delta_{\Lambda^1}(n) &:= \max\big(\delta_1(n,k=3,p=1),\delta_2(n,k=3,p=1)\big),\\
\delta_{\mathrm{U}(3)}(7) &:= \max\big(\delta_1(n=7,k=3,p=2),\delta_2(n=7,k=3,p=2)\big) = 0.4962...,\\
 \delta_{\mathrm{G}_2}(8) &:= \max\big(\delta_1(n=8,k=3,p=3),\delta_2(n=8,k=3,p=3)\big)=0.6212...,\\
 \delta_{\mathrm{E}_7}(134) &:= \max\big(\delta_1(n = 134,k=3,p=3),\delta_2(n = 134,k=3,p=3)\big) = 0.5788...
 \end{align*}
We deal separately with different cases. The non-zero invariant section $f$ has constant norm and, up to rescaling, we can always assume that it is equal to $1$.

\begin{enumerate}[itemsep=5pt]

\item We first consider the case where $p=1$ and $f \in C^\infty(SM,\mc{N})$ is flow-invariant and odd. Assuming that the manifold $(M^n,g)$ is $\delta$-pinched and $\delta > \delta_{\Lambda^1}(n)$, we then obtain that $f_k = 0$ if $k \geq 3$ since Lemma \ref{lemma:growth} ensures that $\delta > \delta_{\Lambda^1}(n) > \max\big(\delta_1(n,k,p=1),\delta_2(n,k,p=1)\big)$ for all $k\geq 3$. The invariant section $f \in C^\infty(SM, \mc{N})$ is thus of degree $\leq 2$, hence of pure degree $1$ since it is odd. As a consequence, $f = \pi_1^* f_1$ for some $f_1 \in C^\infty(M,TM \otimes TM)$. This implies the existence of $J \in C^\infty(M,\End(TM))$ such that $(f_1)_x(v) = J(x)v$ for all $x \in M,v \in T_xM$. Moreover, the properties $f_1(v) \in v^\bot$ and $|f_1(v)|^2 = 1$ implies that $J$ is skew-symmetric and orthogonal, hence an almost-complex structure. The flow-invariance property $\X f =0$ then translates into $(\nabla_v J) v = 0$, that is $J$ endows $(M^n,g)$ with a nearly-Kähler structure \cite{Gray-76, Nagy-02}. By \cite[Proposition 2.1]{Nagy-02} this implies that the universal cover $(\widetilde{M},\widetilde{g})$ splits as a product of a Kähler manifold and a strictly nearly-Kähler manifold (i.e. such that $\nabla_v J \neq 0$ whenever $v \neq 0$). Since $(\widetilde{M},\widetilde{g})$ has negative sectional curvature, it cannot be a Riemannian product, so one of the factors is trivial. Hence: either $(M,g,J)$ is strictly nearly-Kähler, in which case $g$ has positive scalar curvature by \cite[Theorem 1.1]{Nagy-02} and this is impossible, or it is Kähler, in which case the pinching satisfies $\delta \leq 0.25$ by \cite{Berger-60-1}\footnote{Berger \cite{Berger-60-1} covers the positive sectional curvature case but the proof is verbatim the same in the negatively curved case.} and this is also a contradiction. 

\item We now deal with the exceptional case where $n=7$ and $f \in C^\infty(SM,\Lambda^2 \mc{N})$ is an odd flow-invariant almost-complex structure. If $\delta > \delta_{\mathrm{U}(3)}(7)$, we conclude that $f$ has pure degree $1$ since it is odd. As before, we obtain that $f = \pi_1^* f_1$ for some $f_1 \in C^\infty(M,\Lambda^3 TM)$ with the property that for all $v$, $f_1(v,\cdot,\cdot)$ is an almost-complex structure on $v^\bot$. This implies that $f_1$ is a $\mathrm{G}_2$-structure on $M$. The flow-invariance condition $\X f = 0$ then translates into $(\nabla_v f_1)(v,\cdot,\cdot)=0$, that is $f_1$ is nearly-parallel (see \cite[Theorem 5.2, second line]{Fernandez-Gray-82} or \cite[Proposition 2.4, case (6)]{Alexandrov-Semmelmann-12} for definitions). By \cite[Proposition 3.10]{Friedrich-Kath-Moroianu-Semmelmann-97}, nearly-parallel $\mathrm{G}_2$-manifolds are Einstein with non-negative scalar curvature and this contradicts the negative sectional curvature of $g$.

\item Consider now the case $n=8$ and $f \in C^\infty(SM,\Lambda^3 \mc{N})$ is an odd flow-invariant $\mathrm{G}_2$-structure on $SM$. If $\delta > \delta_{\mathrm{G}_2}(8)$, the same argument as before shows that $f = \pi_1^*f_1$, for some $f_1 \in C^\infty(M,\Lambda^4 TM)$ with the property that for every $v$, $f_1(v,\cdot,\cdot,\cdot) = 0$ is a $\mathrm{G}_2$-structure on $v^\bot$, that is $f_1$ is a $\mathrm{Spin}(7)$-structure. Moreover, the flow-invariance condition $\X f =0$ translates into $(\nabla_v f_1)(v,\cdot,\cdot,\cdot)$, that is $f_1$ is nearly-parallel. By \cite[Lemma 4.4]{Fernandez-86}, nearly-parallel $\mathrm{Spin}(7)$-structures are necessarily parallel and this implies that $g$ is Ricci-flat \cite{Bonan-66}, which is a contradiction.

\item Eventually, we deal with the case where $n=134$ and $f \in C^\infty(SM,\Lambda^3 \mc{N})$ is an odd flow-invariant Lie bracket of degree $k \geq 3$ (by Lemma \ref{e7}). Taking $u := f_k \neq 0$, we get that $u$ is a normal twisted conformal Killing tensor of degree $k\geq 3$. If $\delta > \delta_{\mathrm{E}_7}(134)$, then we get as before that $u=0$ which is a contradiction. \\

\end{enumerate}

\emph{Case $\E=\Sym^2 TM$:} We treat separately the case $k=2$ and $k \geq 4$. First of all, let us assume $f \in C^\infty(SM,\Sym^2 \mc{N})$ is a flow-invariant even projector with (even) degree $k \geq 4$. Let $u:=f_k \neq 0$. Then $u$ is a normal twisted conformal Killing tensor satisfying the conclusions of Proposition \ref{proposition:calcul}. We use the bound $\|\imath_v \imath_v u\|^2 \leq \|\imath_v u\|^2$, which gives, setting $\overline{C}^{\Sym^2}_{n,k,\delta} := C^{\Sym^2}_{n,k,\delta} - D_{n,k,\delta}$, that:
\[
B^{\Sym^2}_{n,k,\delta}\|u\|^2 + \overline{C}^{\Sym^2}_{n,k,\delta} \|\imath_vu\|^2 \leq 0.
\]
As before, if $B^{\Sym^2}_{n,k,\delta} > 0$ and $B^{\Sym^2}_{n,k,\delta} + \overline{C}^{\Sym^2}_{n,k,\delta} > 0$, then we can conclude that $u=0$. Now, we have:
\[
\left(B^{\Sym^2}_{n,k,\delta} > 0 \text{ and } B^{\Sym^2}_{n,k,\delta} +\overline{C}^{\Sym^2}_{n,k,\delta} > 0\right) \Leftrightarrow \delta > \max(\delta_1,\delta'_2),
\] 
where $\delta_1$ is the same as before \eqref{delta1} (with $p=2$) and
\[
\begin{split}\delta'_2&:=\frac{\frac{1}{2(k-1)(n+k-3)}+r_{n,1,k-1}+\frac{ (n+2k-2)}{k(n+k-2)}+\frac{n+2k-2}{n+2k-4}\big(\frac{1}{k(n+k-2)}+r_{n,2,k}\big)+\frac{n+2k-6}{2(k-1)(n+k-3)}}{1-\frac{1}{2(k-1)(n+k-3)}+r_{n,1,k-1}-\frac{ (n+2k-2)}{k(n+k-2)}+\frac{n+2k-2}{n+2k-4}\big(1-\frac{1}{k(n+k-2)}+r_{n,2,k}\big)-\frac{n+2k-6}{2(k-1)(n+k-3)} }\\
&=\frac{\frac{(n+2k-4)(n+2k-5)}{2(k-1)(n+k-3)}+s_{n,1,k-1}+\frac{(n+2k-2)(n+2k-3)}{k(n+k-2)}+s_{n,2,k}}{2n+4k-6-\frac{(n+2k-4)(n+2k-5)}{2(k-1)(n+k-3)}+s_{n,1,k-1}-\frac{ (n+2k-2)(n+2k-3)}{k(n+k-2)}+s_{n,2,k} },\end{split}
\]
is defined for $k \geq 4$. We claim that the following holds:

\begin{lemma}
\label{lemma:growth2}
For $n\geq 7$, the function $k \mapsto \delta'_2(n,k)$ is decreasing in $k$.
\end{lemma}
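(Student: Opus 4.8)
The plan is to argue as in Lemma~\ref{lemma:growth}, writing $\delta'_2$ as a ratio $N/D$ and studying its monotonicity directly, but with more care, since — in contrast with the function $F_{n,k,p}$ there — the rational part of $\delta'_2$ will \emph{not} be monotone in $k$. From the simplified expression for $\delta'_2$ one has $\delta'_2(n,k)=N_{n,k}/D_{n,k}$ with
\[
N_{n,k}=G_{n,k}+S_{n,k},\qquad D_{n,k}=(2n+4k-6)-G_{n,k}+S_{n,k},
\]
where $S_{n,k}:=s_{n,1,k-1}+s_{n,2,k}$ and
\[
G_{n,k}:=\frac{(n+2k-4)(n+2k-5)}{2(k-1)(n+k-3)}+\frac{(n+2k-2)(n+2k-3)}{k(n+k-2)}.
\]
Regarding $k\ge 4$ as a real variable and writing $A:=2n+4k-6$, $G:=G_{n,k}$, $S:=S_{n,k}$ (derivatives in $k$), the quotient rule applied to $\delta'_2=\tfrac{G+S}{A-G+S}$, after expanding and performing the obvious cancellations, shows that $(\delta'_2)'<0$ is equivalent to the single inequality
\[
G'\,(A+2S)+S'\,(A-2G)<4\,(G+S).
\]
This is the inequality I would set out to prove.

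Next I would dispose of the $S'$-term. By the formula $s_{n,p,k}=\tfrac{2p\sqrt{n-1}}{3}\sqrt{4+\tfrac{(n-2)^2}{k(n+k-2)}}$ from the proof of Lemma~\ref{lemma:growth}, $S$ is strictly decreasing in $k$, so $S'<0$; and applying the elementary identity $\tfrac{(u+w)(u+w-1)}{uw}=4+\tfrac{w-u-1}{u}-\tfrac{w-u+1}{w}$ with $(u,w)=(k-1,n+k-3)$ and then $(u,w)=(k,n+k-2)$ gives the closed form
\[
G_{n,k}=6+\frac{n-3}{2(k-1)}+\frac{n-3}{k}-\frac{n-1}{2(n+k-3)}-\frac{n-1}{n+k-2}.
\]
For $k\ge 4$ this yields $G\le 6+\tfrac{5(n-3)}{12}=\tfrac{5n+57}{12}$, hence $A-2G\ge (2n+10)-\tfrac{5n+57}{6}=\tfrac{7n+3}{6}>0$. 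Since $S'<0$ and $A-2G>0$, we have $S'(A-2G)<0$, so it suffices to prove the cleaner bound $G'(A+2S)<4(G+S)$.

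For this I would use three crude estimates, all immediate from the closed forms. The two negative summands of $G'$ are $<0$ (as $n\ge 7$) and $(n+k-2)^2>(n+k-3)^2$, so $G'<\tfrac{3(n-1)}{2(n+k-3)^2}$; from $\tfrac{(n-2)^2}{k(n+k-2)}\le\tfrac{(n-2)^2}{n-1}<n-2$ one gets $S<2\sqrt{(n-1)(n+2)}<2n+1$, hence $A+2S<6n+4k-4$; and $s_{n,p,k}>\tfrac{4p\sqrt{n-1}}{3}$ gives $S>4\sqrt{n-1}$, hence $4(G+S)>16\sqrt{n-1}$. It then remains to check $\tfrac{3(n-1)(6n+4k-4)}{2(n+k-3)^2}\le 16\sqrt{n-1}$, i.e. $3\sqrt{n-1}\,(6n+4k-4)\le 32(n+k-3)^2$; since $k\mapsto\tfrac{6n+4k-4}{(n+k-3)^2}$ is decreasing for $k\ge 4$, the extreme case is $k=4$, where it becomes $9\sqrt{n-1}\,(n+2)\le 16(n+1)^2$, which holds for all $n\ge 4$ (the left side is $O(n^{3/2})$, the right side $O(n^2)$, and the first few values are checked directly). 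This completes the proof.

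The step I expect to be the real obstacle is precisely establishing $G'(A+2S)<4(G+S)$ uniformly in $k$: $G_{n,k}$ is \emph{not} monotone in $k$ — it decreases from $k=4$ and then increases back towards its limit $6$ — so the ``increasing numerator, decreasing denominator'' shortcut available for $F_{n,k,p}$ in Lemma~\ref{lemma:growth} does not apply here, and one genuinely needs a quantitative argument showing that the linear growth of $A=2n+4k-6$ swamps the small oscillations of $G$ and the decay of $S$. Choosing the three bounds above loosely enough to stay elementary, yet tightly enough that the final polynomial inequality survives for every $k\ge 4$ and $n\ge 7$, is the only delicate point.
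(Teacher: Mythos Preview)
Your proof is correct, but it takes a different route from the paper's. The paper does not differentiate $\delta'_2$ directly. Instead, it writes out $\tfrac{1}{\delta'_2}-1$ as a ratio and then divides both numerator and denominator by $n+2k-3$. After this normalisation the numerator becomes
\[
2-\frac{(n+2k-4)(n+2k-5)}{(k-1)(n+k-3)(n+2k-3)}-\frac{2}{k}-\frac{2}{n+k-2},
\]
which is manifestly increasing in $k$ once one checks (by a partial-fraction expansion, and this is exactly where $n\ge 7$ enters) that the first subtracted term is decreasing; the denominator becomes a sum of four terms each of which is visibly decreasing in $k$. So $\tfrac{1}{\delta'_2}-1$ is increasing and $\delta'_2$ is decreasing, with no derivative estimates needed.

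Your approach is more hands-on: you keep $\delta'_2=(G+S)/(A-G+S)$ as is, apply the quotient rule, and then kill the resulting inequality $G'(A+2S)+S'(A-2G)<4(G+S)$ with three crude bounds on $G'$, $S$ and $G+S$. The chain of estimates is sound (in particular the closed form for $G$, the bound $A-2G>0$ for $k\ge 4$, and the final reduction to $9\sqrt{n-1}(n+2)\le 16(n+1)^2$ all check out), and you correctly note that the $G'\le 0$ regime is trivial. What you lose is elegance and brevity: the paper's single normalisation by $n+2k-3$ replaces your entire quantitative endgame. What you gain is robustness: your argument does not depend on spotting that particular trick, and it makes explicit \emph{why} monotonicity still holds despite $G$ not being monotone --- the linear growth of $A$ dominates.
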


\begin{proof}
We have 
\[
\begin{split}
\frac1{\delta'_2}-1&=\frac{2n+4k-6-\frac{(n+2k-4)(n+2k-5)}{(k-1)(n+k-3)}-\frac{2(n+2k-2)(n+2k-3)}{k(n+k-2)}}{\frac{(n+2k-4)(n+2k-5)}{2(k-1)(n+k-3)}+s_{n,1,k-1}+\frac{(n+2k-2)(n+2k-3)}{k(n+k-2)}+s_{n,2,k}}\\
&=
\frac{2-\frac{(n+2k-4)(n+2k-5)}{(k-1)(n+k-3)(n+2k-3)}-\frac2k-\frac2{n+k-2}}{\frac{(n+2k-4)(n+2k-5)}{2(k-1)(n+k-3)(n+2k-3)}+\frac{s_{n,1,k-1}}{n+2k-3}+\frac{(n+2k-2)}{k(n+k-2)}+\frac{s_{n,2,k}}{n+2k-3}}.
\end{split}
\]
The numerator is increasing in $k$ and the denominator is decreasing in $k$ since
for $n\geq 7$ the expression
\[\begin{split}\frac{(n+2k-4)(n+2k-5)}{(k-1)(n+k-3)(n+2k-3)}&=\frac1{2(k-1)}+\frac{n-5}{2(k-1)(n+2k-3)}\\
&+\frac{3}{2(n+k-3)} + \frac{n - 7}{2(n+k-3)(n + 2k - 3)}\end{split}
\]
is decreasing in $k$.
\end{proof}

As a consequence, we deduce that for $\delta > \max\big(\delta_1(n,k=4),\delta_2'(n,k=4)\big)$, the flow-invariant even orthogonal projection $f$ has degree $\leq 2$, so it now remains to study the case of degree $2$. \\

In this case, by Lemma \ref{lemma:algebra}, we know that $f = \frac{r}{n} \mathbbm{1}_{TM} + f_2$, with $u= f_2$. Since $\X \mathbbm{1}_{TM} = 0$, we get that $\X_{\pm} u = 0$. Following the proof of Proposition \ref{proposition:calcul} (the $\X_-u$ term disappears from the computation) we obtain similarly as in \eqref{equation:total} that:
\begin{equation}
\label{equation:total2}
\begin{split}
0 \leq \dfrac{1+\delta}{2} \left(2(n+2\cdot 2-4) \|\imath_v u\|^2 + 2\|u\|^2 \right)& +  \dfrac{2\cdot 2}{3}(1-\delta) \left[2(n+2-2)(n-1)\right]^{1/2} \|u\|^2 \\
&- \delta \cdot 2(n+2-2)\|u\|^2.
\end{split}
\end{equation}
Observe that $\imath_v f = 0 = \frac{r}{n} v + \imath_v u$. Moreover, at a given point $x_0 \in M$, we have
\[
\begin{split}
\|f\|^2_{L^2(S_{x_0}M)} & = \int_{S_{x_0}M} \Tr(f^2) dv = r \vol(\Ss^{n-1}) \\
& = \|f_0\|^2_{L^2(S_{x_0}M)} + \|u\|^2_{L^2(S_{x_0}M)} = \frac{r^2}{n} \vol(\Ss^{n-1}) + \|u\|^2_{L^2(S_{x_0}M)},
\end{split}
\]
and we obtain 
\[
\begin{array}{l}
\|u\|^2_{L^2(S_{x_0}M)} = r \big(1-\frac{r}{n}\big)\vol(\Ss^{n-1}), \\
\|\imath_v u\|^2_{L^2(S_{x_0}M)} = \frac{r^2}{n^2}\vol(\Ss^{n-1}) = \frac{r}{n(n-r)}\|u\|^2_{L^2(S_{x_0}M)}.
\end{array}
\]
Plugging the previous equality in \eqref{equation:total2}, we then obtain:
\[
\left(2 \delta n - \dfrac{4}{3}(1-\delta)[2n(n-1)]^{1/2} - (1+\delta) \dfrac{n}{n-r} \right)\|u\|^2 \leq 0.
\]
The term in the brackets is positive if and only if
\[
\delta > \delta'_2(n,k=2,r) := \dfrac{\frac{4}{3}[2n(n-1)]^{1/2}+ \frac{n}{n-r}}{2n+\frac{4}{3}[2n(n-1)]^{1/2}- \frac{n}{n-r}}.
\]
This term is clearly increasing in $r$ and $r \leq \min\big(\rho(n)-1,\frac{n-2}{2}\big)$.
As a consequence, we deduce that for
\[
\delta > \max\left(\delta'_2\left(n,k=2,r=\min\left(\rho(n)-1,\frac{n-2}{2}\right)\right),\delta'_2(n,k=4)\right) = \delta'_2(n,k=4),
\]
the flow-invariant even orthogonal projector $f$ has degree $0$, which contradicts Lemma \ref{lemma:algebra}.
\end{proof}

\bibliographystyle{alpha}

\bibliography{Biblio}

\end{document}